\newcommand{\h}{\mathfrak{h}}
\newcommand{\g}{\mathfrak{g}}
\newcommand{\m}{\mathfrak{m}}
\newcommand{\bs}{\backslash}
\newcommand{\bd}{\begin{displaymath}}
\newcommand{\ed}{\end{displaymath}}
\newtheorem{lema}{Lemma}[section]
\newtheorem{prop}[lema]{Proposition}
\newtheorem{cor}[lema]{Corollary}
\newtheorem{teo}[lema]{Theorem}
\newtheorem*{T}{Theorem}
\theoremstyle{remark}
\newtheorem{remark}{\textbf{Remark}}[section]
\begin{document}

\title{Isometric actions of quaternionic symplectic groups}
\author{Manuel Sedano-Mendoza}
\email{manuel.sedano@cimat.mx}

\keywords{Pseudo-Riemannian manifolds, rigidity results, non-compact quaternionic symplectic groups}

\maketitle

\begin{abstract}
Denote by $Sp(k,l)$ the quaternionic symplectic group of signature $(k,l)$. We study the deformation rigidity of the embedding $Sp(k,l) \times Sp(1) \hookrightarrow H$, where $H$ is either $Sp(k+1,l)$ or $Sp(k,l+1)$, this is done by studying a natural non-associative algebra $\m$ comming from the affine structure of $Sp(1) \backslash H$. We compute the automorphism group of $\m$ and as a consecuence of this, we are able to compute the isometry group of $Sp(1) \bs H$ at least up to connected components. Using these results, we obtain a uniqueness result on the structure of $Sp(1) \bs H$ together with an isometric left $Sp(k,l)$-action and classify its finite volume quotients up to finite coverings. Finally, we classify arbitrary isometric actions of $Sp(k,l)$ into connected, complete, analytic, pseudo-Riemannian manifolds admitting a dense orbit of dimension bounded by $\textrm{dim}(Sp(1) \bs H)$.
\end{abstract}

\section{Introduction}

\subsection{Isometries of homogeneous spaces}
Homogeneous spaces $L \bs H$, where $H$ is a Lie group and $L \subset H$ is a closed subgroup, arise naturally as the most symmetric manifolds when considered with rigid geometric structures, such as pseudo-Riemannian manifolds or more generally affine connections. So for example a right-invariant pseudo-Riemannian metric $\lambda$ on $L \bs H$ satisfies that
	\[ R_g : L \bs H \rightarrow L \bs H, \qquad R_g(L h) = L (hg) \]
is an isometry for every $g \in H$, that is $R(H) \subset Iso(L \bs H,\lambda)$. An important family of homogeneous spaces are the Riemannian symmetric spaces where the isotropy subgroup $L$ is compactly embedded in $H$ and is determined by the fixed points under an involutive automorphism. The following is a classical result
\begin{T}[\cite{He}]
If $H$ is connected, semisimple and acts efectively in a Riemannian symmetric space $(L \bs H, \lambda)$, then $H$ realizes the connected component of the identity in the isometry group, i.e. $Iso(L \bs H,\lambda)_0 = R(H)$.
\end{T}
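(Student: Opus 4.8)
The plan is to set $G:=Iso(L\bs H,\lambda)_0$, to view the Lie algebra of $H$ (pushed forward by $dR$) as a subalgebra $\h\subseteq\g:=Lie(G)$, and to prove $\h=\g$; since $R(H)$ is connected and acts transitively on $L\bs H$, once $dR$ is surjective it follows that $R(H)$ is open in $G$, hence closed, hence equal to the connected group $G$. So the whole matter reduces to a statement about $\h$ and $\g$.

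First I would unpack the hypotheses. Effectiveness makes $R$ injective, and also makes $dR\colon Lie(H)\to\g$ injective, because an ideal of $Lie(H)$ in the kernel of the infinitesimal action would exponentiate to a nontrivial subgroup of $H$ acting trivially. Fix the base point $o:=Le$; since $L\bs H$ is Riemannian the isotropy $K:=Iso(L\bs H,\lambda)_o$ is compact, and effectiveness also forces $\mathfrak{k}:=Lie(K)$ to contain no nonzero ideal of $\g$ (such an ideal exponentiates to a normal subgroup of $G$ inside $K$, hence acting trivially on $G/K=L\bs H$). The symmetric structure supplies the geodesic symmetry $s_o$, an involutive isometry; conjugation by it is an involutive automorphism of $G$ whose differential $\theta$ produces the eigenspace splitting $\g=\mathfrak{k}\oplus\p$ with $\mathfrak{k}=\g^{\theta}$, $\p=\g^{-\theta}\cong T_o(L\bs H)$, and $[\mathfrak{k},\mathfrak{k}]\subseteq\mathfrak{k}$, $[\mathfrak{k},\p]\subseteq\p$, $[\p,\p]\subseteq\mathfrak{k}$. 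I would then record that $\g$ is semisimple: a Euclidean (abelian) ideal cannot occur, since a connected semisimple group has no transitive effective isometric action on a positive-dimensional flat space (its image in $\R^N\rtimes SO(N)$ would have compact-type Lie algebra, hence be compact, with compact orbits). Semisimplicity then lets me split off an ideal complementary to the transvection ideal $\mathfrak{t}:=\p\oplus[\p,\p]$ of $\g$; this complement is $\theta$-invariant and meets $\p$ trivially, so it lies in $\mathfrak{k}$ and is therefore zero. Hence $\g=\p\oplus[\p,\p]$.

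Next I would bring in transitivity of the $R$-action: the $H$-orbit of $o$ is all of $L\bs H$, which infinitesimally reads $\h+\mathfrak{k}=\g$. The crucial step is to align the decomposition of $\h$ with $\g=\mathfrak{k}\oplus\p$. As $\h$ is semisimple it carries a Cartan involution $\theta_{\h}$; using that a Cartan involution of a semisimple subalgebra of a real semisimple Lie algebra extends to a Cartan involution of the ambient algebra, and that any two Cartan involutions of $\g$ are $G$-conjugate, I may replace $\h$ by a conjugate subalgebra — equivalently, choose the base point appropriately — so that $\theta|_{\h}=\theta_{\h}$, whence $\h=(\h\cap\mathfrak{k})\oplus(\h\cap\p)$. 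Substituting into $\h+\mathfrak{k}=\g$ gives $(\h\cap\p)+\mathfrak{k}=\g$, so $\h\cap\p$ maps onto $\g/\mathfrak{k}\cong\p$; being a subspace of $\p$, it must equal $\p$. Thus $\p\subseteq\h$, so $\h\supseteq\p\oplus[\p,\p]=\g$, i.e. $\h=\g$, and the reduction in the first paragraph concludes.

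I expect the genuine obstacle to be precisely this alignment of Cartan decompositions. It works because $\theta$ is a Cartan involution of $\g$, which is where one implicitly needs $L\bs H$ to have no factor of compact type (for a compact factor $\theta$ is the identity, and the statement fails outright: $Sp(1)\cong SU(2)$ acts transitively and effectively on the symmetric space $S^3$, yet $Iso(S^3)_0=SO(4)\supsetneq Sp(1)$), and it leans on the extension and conjugacy theorems for Cartan involutions of real semisimple Lie algebras. The remaining ingredients — injectivity of $dR$, the absence of ideals of $\g$ inside $\mathfrak{k}$, the identification of $\g$ with its transvection ideal, and the passage from $\h=\g$ back to $R(H)=G$ — are routine once the orthogonal symmetric Lie algebra $(\g,\mathfrak{k},\theta)$ is in place.
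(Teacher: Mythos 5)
The paper offers no proof of this statement --- it is quoted from Helgason \cite{He} --- so there is nothing internal to compare against; judged on its own, your argument is essentially the classical proof and is sound in the setting it actually covers. The chain of reductions is right: the $G:=Iso(L\bs H,\lambda)_0$-action is tautologically effective, so $\mathfrak{k}$ contains no ideal of $\g$; the geodesic symmetry gives the orthogonal symmetric Lie algebra $(\g,\mathfrak{k},\theta)$; $\g=\p\oplus[\p,\p]$ follows as you say; the alignment step is exactly the Karpelevich--Mostow theorem (a Cartan involution of a semisimple subalgebra extends to one of the ambient algebra) plus conjugacy of Cartan involutions, which lets you assume $\h=(\h\cap\mathfrak{k})\oplus(\h\cap\p)$; and then transitivity $\h+\mathfrak{k}=\g$ forces $\p\subseteq\h$, hence $\h\supseteq\p+[\p,\p]=\g$, and surjectivity of $dR$ gives $R(H)=G$.

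The one real limitation is the one you flag yourself: the alignment step needs $\theta$ to be a Cartan involution of $\g$, i.e.\ it needs $L\bs H$ to have no compact local factors, and your $S^3$ example (with $L=\{e\}$, $H=Sp(1)$) does show the statement fails under a literal reading. So relative to the statement as printed your proof only covers the noncompact type; the intended reading (the sentence preceding the theorem in the paper: $L$ compactly embedded and equal to the fixed-point group of an involution) still admits compact symmetric spaces, and there your method gives nothing --- deciding when a proper semisimple subgroup can act transitively on a compact homogeneous space is precisely the Onishchik enlargement problem the paper turns to in the next paragraph, and requires that classification rather than a Cartan-decomposition argument. A second, minor point: your exclusion of the Euclidean factor tacitly uses a global splitting with flat factor $\R^N$, which is only guaranteed for simply connected $M$; since $L\bs H$ need not be simply connected it is cleaner to argue on the orthogonal symmetric Lie algebra itself: in the Euclidean ideal $\g_0=\p_0\rtimes\mathfrak{k}_0$ a semisimple subalgebra meets the abelian ideal $\p_0$ trivially and (by Whitehead's lemma) is a translate of a subalgebra of $\mathfrak{k}_0$, so its sum with $\mathfrak{k}_0$ cannot contain $\p_0$ unless $\p_0=0$, contradicting $\h+\mathfrak{k}=\g$. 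With these provisos --- which concern the hypotheses of the quoted statement more than your reasoning --- the proposal is a correct proof of the result in the form the paper actually uses it.
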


In \cite{Oni2}, Onishchik studied the setting where $H$ is a compact Lie group and $K,L \subset H$ are closed subgroups so that $L$ acts transitively in $K \bs H$, and we have that $K \bs H \cong M \bs L$, where $M = L \cap K$. At the Lie algebra level, the pair $(\h,\mathfrak{k})$ is called an extension or enlargement of the pair $(\mathfrak{l}, \mathfrak{m})$, where $\mathfrak{m}$ and $\mathfrak{k}$ are the Lie algebras of $L$ and $M$, Onishchik classified such extensions in three general families and gave a complete list of such extensions when $\h$ is a compact simple Lie algebra. Onishchik's classification has been used to compute $Iso(M)_0$, where $M$ is a compact homogeneous space with special properties, such as isotropy irreducible spaces, normal homogeneous and naturally reductive spaces, see \cite{Reg} and \cite{w-z}, however in general it is still an open problem to compute such group. If we move away from the compact case even less is know, so for example consider $H$ a non-compact simple Lie group with an involutive automorphism $\theta$, in general $H^\theta$ is not compactly embedded in $H$ but $H^\theta \bs H$ always admits a structure of a symmetric pseudo-Riemannian manifold, in this case it is still unknown if $R(H) = Iso(H^\theta \bs H)_0$.

Rigidity phenomena coming from dynamics appear when we consider non-compact homogeneous spaces with simple Lie groups, so for example compact quotients of non-compact Riemannian symmetric spaces, without closed two-dimensional factors, are completely determined by its fundamental group; this is in deep contrast to its compact counterpart, this result is given by Mostow's rigidity theorem, see \cite{Rat}. More generally we have the following result due to Margulis:

\begin{T}[Margulis' superrigidity \cite{Z1}]
Let $H$ be a centerless semisimple Lie group without compact factors and $rank \geq 2$, $\Gamma$ an irreducible lattice of $H$ and $G$ a non-compact simple Lie group. If $\pi : \Gamma \rightarrow G$ is a homomorphism with Zariski dense image, then $\pi$ has continuous extension to a homomorphism $H \rightarrow G$.
\end{T}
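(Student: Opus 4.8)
The plan is to derive this from Zimmer's cocycle superrigidity theorem, which isolates the ergodic-theoretic core of Margulis' argument. Since $\Gamma$ is a lattice, $S := \Gamma \bs H$ carries a finite $H$-invariant measure and the $H$-action on it (by right translation) is ergodic, in fact mixing by the Howe--Moore theorem. A Borel fundamental domain for $\Gamma$ in $H$ yields a Borel cocycle $\alpha : H \times S \to \Gamma$, and $\beta := \pi \circ \alpha : H \times S \to G$ is then a Borel cocycle into $G$. Unwinding definitions, it suffices to show that $\beta$ is cohomologous to a cocycle induced by a continuous homomorphism $H \to G$; this is precisely cocycle superrigidity for $\beta$.

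First I would use amenability of a minimal parabolic $P \subset H$ to build a measurable $H$-equivariant boundary map $S \times H/P \to G/Q$, where $Q \subset G$ is taken as small as possible: amenability of $P$ makes the $H$-action on $S \times H/P$ amenable, so one obtains an equivariant map into the space of probability measures on a flag variety of $G$, and the standard reduction (the stabilizer of a measure not concentrated on a proper flag is again parabolic) upgrades this to a map into $G/Q$ itself. Zariski density of $\pi(\Gamma)$ prevents the image from being trapped in a proper algebraic subgroup and, away from the trivial case, forces $Q$ to be a proper parabolic. The crux is then to prove that this measurable equivariant map is rational, i.e.\ agrees almost everywhere with a morphism of real algebraic varieties. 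This is the unique point where $\mathrm{rank}(H) \geq 2$ is indispensable: working with the restriction to $\Gamma$ and with products $G/Q \times G/Q$, one uses that $\Gamma$-invariant finite measures and $\Gamma$-equivariant measurable maps between real algebraic varieties are algebraic, combined with the ergodicity of the actions of non-compact Levi and unipotent subgroups on $\Gamma \bs H$ (Moore's ergodicity theorem); the abundance of opposite unipotent directions available only in higher rank is what kills the dependence of the boundary map on the boundary variable. (Margulis' original route reaches the same point via the Oseledec multiplicative ergodic theorem applied to a linearization of $\beta$, whose Lyapunov flags are numerous enough to pin down the rational map only when the rank is at least two.)

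Granting rationality of the boundary map, the algebraic hull of $\beta$ is forced into a group on which $\beta$ is cohomologous to $\rho \circ \mathrm{pr}_H$ twisted by a cocycle valued in a compact subgroup commuting with the image; since $G$ is simple and non-compact and $\pi(\Gamma)$ is Zariski dense, the compact twist is trivial and $\rho : H \to G$ is a continuous homomorphism with Zariski-dense image. It remains to identify $\rho|_\Gamma$ with $\pi$: the measurable map $\varphi : S \to G$ implementing the cohomology satisfies an intertwining relation between the $\Gamma$-actions defined by $\pi$ and by $\rho|_\Gamma$, so by Zariski density of $\pi(\Gamma)$ together with the Borel density theorem it is essentially constant; conjugating $\rho$ by its value gives $\rho|_\Gamma = \pi$, and hence $\pi$ extends to the continuous homomorphism $\rho : H \to G$.

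The main obstacle is exactly the rationality step for the boundary map — it is where the higher-rank hypothesis genuinely enters (rank-one lattices, e.g.\ in $SO(n,1)$ or $SU(n,1)$, famously fail superrigidity), and it requires a careful interplay between the combinatorics of parabolic subgroups of $H$ and the ergodicity of subgroup actions on $\Gamma \bs H$.
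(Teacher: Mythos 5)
This statement is not proved in the paper at all: it is a classical background result quoted verbatim from Zimmer's book \cite{Z1} in the introduction, so there is no internal argument of the paper to compare your proposal against. Judged on its own terms, your outline is an accurate roadmap of the standard Margulis--Zimmer strategy (induce the cocycle $\beta=\pi\circ\alpha$ on $H\times\Gamma\backslash H$, use amenability of a minimal parabolic to produce a measurable boundary map into a flag variety $G/Q$, prove that this map is rational using the higher-rank structure, and then descend from the cocycle statement to an extension of $\pi$).

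However, as a proof it has a genuine gap rather than merely a compressed step: the rationality of the boundary map --- equivalently, the full content of cocycle superrigidity that you invoke at the outset --- is exactly the substantive core of the theorem, and your text only names it and indicates where rank $\geq 2$ enters, without carrying out the argument (neither the Margulis route via the multiplicative ergodic theorem and Lyapunov flags, nor the Zimmer route via algebraic hulls and ergodicity of unipotent subgroups on $\Gamma\backslash H$ is actually executed). So what you have written is a correct plan, not a proof. A secondary point worth flagging: the theorem as stated in the paper (and as you use it) silently assumes that $\pi(\Gamma)$ is not relatively compact; if the image is bounded, the measure on the flag variety produced by amenability can be invariant, no proper parabolic $Q$ arises, and the extension can fail. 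Your ``away from the trivial case'' remark is where this hypothesis is actually needed, and a complete argument would have to isolate it explicitly.
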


Mostow and Margulis' rigidity results highlight the relationship between geometry and topology of homogeneous spaces with a semisimple isometry group without compact factors, these results were generalized by R. Zimmer to more general actions in his cocycle superrigidity theorem \cite{Z1}, in this result, the ``irreducibility'' condition is taken to be in the volume sense, that is, ergodic actions. Zimmer obtained a large number of results on rigidity behaviour of $G$-actions, where $G$ is a non-compact simple Lie group, so that in \cite{Z2} he made a general statement conjecturing that the only examples of such ergodic actions up to trivial modifications are the double quotients $K \backslash H / \Gamma$ together with a non-trivial continuous homomorphism $\rho: G \rightarrow H$ such that $K \subset H$ is a compact subgroup that commutes with $\rho(G)$ and $\Gamma \subset H$ is a lattice, $G$ acts in the double quotient via left multiplications of $\rho(G)$; such conjecture became to be known as Zimmer's program, for a survey on more precise statements and conjectures around Zimmer's program see \cite{Fi}. M. Gromov contributed to Zimmer's program in \cite{G} in a more geometric context where he introduced the notion of rigid geometric structures, a notion that includes pseudo-Riemannian structures, basically, rigidity of a geometric structure is the condition needed for the group of isometries to be a Lie group. In his paper, Gromov proved that in the presence of the $G$-action with a dense orbit, the Lie algebra $\mathcal{H}$ of Killing fields centralizing the $G$-action covers all directions of the manifold in an open dense subset, here this result is stated in Theorem \ref{T4.1}, so that the manifold is almost an homogeneous space.

There have been recent works on the understanding of the structure of the Lie algebra $\mathcal{H}$ centralizing the $G$-action, so that under some hypothesis on the dimension of the manifold $M$ it is possible to say exactly what Lie algebra $\mathcal{H}$ is and to integrate to a global action of the simply connected Lie group $H$. The understanding of the structure of $\mathcal{H}$ is carried out using representation theory of $G$ and how the possible $G$-submodules of $\mathcal{H}$ glues together to obtain the Lie algebra structure, however this has been only possible by fixing $G$ explicitly because representation theory changes drastically as we change $G$, so for example in \cite{Q-O1} and \cite{Q-O2} an extensive analysis of such isometric actions is given in the cases where $G$ belongs to the first two infinite families of unitary groups, namely, $G$ is $SO(p,q)$ or $U(p,q)$ respectively. In the present work, we study the third and last infinite family of unitary groups, namely, the group of quaternionic unitary matrices $Sp(p,q)$, thus completing the understanding of the isometric actions in the first non-trivial dimensions of the three infinite families of classical unitary groups. 

\subsection{Main Results}

For every pair of integeres $p,q \in \mathbb{N}$, let us consider $H = Sp(p,q)$ the group of linear symmetries of the quaternionic hermitian space $\mathbb{H}^{p,q}$. If $K = Sp(1)$ and $G$ is either $Sp(p-1,q)$ or $Sp(p,q-1)$, then there is a cannonical injective homomorphism $G \times K \hookrightarrow H$ as block-diagonal matrices (see (\ref{inclusion1}) and (\ref{inclusion2}) in section \ref{Def}), so that $(G \times K) \backslash H$ is a symmetric space with the pseudo-Riemannian structure induced by the bilinear form $B(X,Y) = \textrm{Re}\ tr(XY)$, furthermore we have a sequence of pseudo-Riemannian submersions $H \rightarrow K \backslash H \rightarrow (G \times K) \backslash H$. As $G$ commutes with the compact group $K$, for every $g \in G$ and $h \in H$, the corresponding left and right multiplications
	\[ L_g \circ R_h : K \bs H \rightarrow K \bs H, \qquad L_g \circ R_h (K \ h') = K \ (g h' h) \]
are isometries, that is
	\[ L(G)R(H) = \{L_g \circ R_h : g \in G, \ h \in H \} \]
is a subgroup of isometries of $K \bs H$.

\begin{teo}\label{T3.1}
The isometry group of $K \bs H$ has finitely many components and 
	\begin{displaymath}
		Iso(K \bs H)_0 \cong L(G)R(H),
	\end{displaymath}
where $Iso(K \bs H)_0$ is the connected component of the identity of $Iso(K \bs H)$, moreover the homomorphism
	\begin{displaymath}
		\begin{array}{rcl}
	G \times H & \rightarrow & Iso(K \bs H) \\
	(g,h) & \mapsto & L_g \circ R_{h^{-1}}
		\end{array}
	\end{displaymath}
has as a Kernel $\{\pm(e_G, e_H) \} \cong \mathbb{Z}_2$.
\end{teo}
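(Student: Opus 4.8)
\emph{Proof strategy.}
The plan is to determine $Iso(M)_0$, where $M:=K\bs H$, by reducing to the linear isotropy at the base point $o=Ke$ and to the computation of $\textrm{Aut}(\m)$, exploiting that $R(H)$ already acts transitively on $M$. Consider first the kernel. Since $G\cap K=\{e\}$ inside $H$ one has $L_g=\textrm{id}$ only for $g=e$, and since $\bigcap_{x\in H}x^{-1}Kx$ is a normal subgroup of $H$ contained in $K$ while $Z(H)=\{\pm I\}\not\subset K$, one has $R_h=\textrm{id}$ only for $h=e$. Hence the kernel $N$ of $(g,h)\mapsto L_g\circ R_{h^{-1}}$ meets each factor $G\times\{e\}$ and $\{e\}\times H$ trivially; being normal in $G\times H$ this forces $N\subset Z(G)\times Z(H)=\{\pm I_G\}\times\{\pm I_H\}$, and discarding the two ``factor'' elements leaves $N\subset\{(e_G,e_H),(-I_G,-I_H)\}$. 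Finally $(-I_G,-I_H)\in N$, because $L_{-I_G}\circ R_{-I_H}$ sends $Kx$ to $K(-I_G)(-I_H)x=K\,\textrm{diag}(I,-1)\,x=Kx$ with $\textrm{diag}(I,-1)\in K$. Thus $N=\{\pm(e_G,e_H)\}\cong\mathbb{Z}_2$, so in particular $\dim L(G)R(H)=\dim G+\dim H$.

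For the main statement, recall that $M$ is complete and analytic, and is naturally reductive for the $\textrm{Ad}$-invariant form $B$: since $K$ is compact, $B|_{\mathfrak{k}}$ is non-degenerate, so $\h=\mathfrak{k}\oplus\m$ with $\m=\mathfrak{k}^{\perp_B}\cong T_oM$ an $\textrm{Ad}(K)$-invariant complement, carrying the reductive product $[\cdot,\cdot]_\m$ together with the associated $\mathfrak{so}(\m)$-valued curvature term; this is the algebra $\m$. An isometry fixing $o$ is determined by its $1$-jet there, and its differential at $o$ preserves the Levi-Civita curvature and all of its covariant derivatives, hence the structure of $\m$; this gives a faithful representation $Iso(M)_o\hookrightarrow\textrm{Aut}(\m)$. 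On the other hand the stabiliser of $o$ in $L(G)R(H)$ is the image, under $(g,h)\mapsto L_g\circ R_{h^{-1}}$, of the connected subgroup $\{(g,h)\in G\times H:gh^{-1}\in K\}$ of dimension $\dim G+\dim K$, hence a connected subgroup of $Iso(M)_o$ of that same dimension. Thus it suffices to prove that $\textrm{Aut}(\m)$ has finitely many components and that $\dim\textrm{Aut}(\m)=\dim G+\dim K$.

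This is the heart of the matter, and I would treat it via the structure of $\m$ as a $G\times K$-module: one has an orthogonal, $\mathbb{Z}_2$-graded decomposition $\m=\g\oplus W$ with $\g\cdot\g\subset\g$, $\g\cdot W\subset W$, $W\cdot W\subset\g$, in which $\g=\textrm{Lie}(G)$ carries the adjoint representation while $W\cong\mathbb{H}^{p-1,q}$ (resp. $\mathbb{H}^{p,q-1}$) is the standard quaternionic module of $G$, real-irreducible of quaternionic type, so $\textrm{End}_{\g}(W)=\mathbb{H}$ acting by right multiplication. As $\g$ is characterised intrinsically inside $\m$ (e.g. as the maximal subalgebra acting on all of $\m$ by derivations), any $A\in\textrm{Aut}(\m)$ preserves $\g$ and its unique $\g$-invariant complement $W$; the restriction $A\mapsto A|_{\g}$ lands in $\textrm{Aut}(\g)=\textrm{Aut}(\mathfrak{sp}(p-1,q))$, a finite extension of $\textrm{Ad}(G)$ (type $C_n$ has no diagram symmetry), and its kernel consists of those $A$ with $A|_{\g}=\textrm{id}$: for such $A$, $A|_W$ commutes with the $\g$-action so equals right multiplication by a quaternion $\nu$, and compatibility with $W\cdot W\subset\g$ (which scales by $|\nu|^2$) forces $|\nu|=1$, i.e. $A|_W\in\textrm{Ad}(K)|_W$. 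Hence this kernel is $\cong Sp(1)$, and $\textrm{Aut}(\m)$ is an extension of a group with finitely many components by the connected group $Sp(1)$; consequently $\textrm{Aut}(\m)$ has finitely many components, $\textrm{Aut}(\m)_0=\textrm{Ad}(G\times K)|_\m$, and $\dim\textrm{Aut}(\m)=\dim G+\dim K$.

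It remains to assemble the estimates. Transitivity of $R(H)$ gives $\dim Iso(M)_0=\dim M+\dim Iso(M)_o\le(\dim H-\dim K)+\dim\textrm{Aut}(\m)=\dim H+\dim G=\dim L(G)R(H)$; as $L(G)R(H)$ is connected and contained in $Iso(M)_0$ with equal dimension, it is open there, hence equal to $Iso(M)_0$. For the component count, transitivity of $Iso(M)_0$ yields $Iso(M)/Iso(M)_0\cong Iso(M)_o/(Iso(M)_o\cap Iso(M)_0)$, which is finite since $Iso(M)_o\hookrightarrow\textrm{Aut}(\m)$ and $(Iso(M)_o)_0$ already has the full dimension $\dim G+\dim K$ of $\textrm{Aut}(\m)$. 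I expect the genuine obstacle to be the computation of $\textrm{Aut}(\m)$ --- in particular, excluding automorphisms not induced by $N_H(K)=G\times K$ --- since this is exactly where the representation theory of $Sp(p-1,q)\times Sp(1)$, which changes from one classical family to another, is indispensable; a secondary technical point is to justify that an isometry fixing $o$ really preserves the full algebraic structure of $\m$ and not merely the metric.
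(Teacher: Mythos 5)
Your overall architecture is the paper's: compute the kernel by a centrality argument plus the direct check on $(-e_G,-e_H)$, embed the isotropy at $o=Ke$ into $\mathrm{Aut}(\m)$, show $\mathrm{Aut}(\m)$ has identity component $\widetilde{Ad}(G\times K)$ of dimension $\dim G+3$ and finitely many components, and finish with transitivity of $R(H)$ and a dimension count. The genuine gap sits exactly where you dispose of the key point in a parenthesis: that every $A\in\mathrm{Aut}(\m)$ preserves $\g$. Your proposed intrinsic characterization of $\g$ as the set of $x\in\m$ with $\widetilde{ad}_x\in Der(\m)$ is the right one, but it is not free: one must prove that no element of $\m$ with nonzero component in $\p\cong\mathbb{H}^{k,l}$ acts on $(\m,[\cdot,\cdot]_\m)$ by derivations, and this is precisely where the non-Lie nature of $\m$ enters. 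In the paper this is Lemma \ref{L3.2}, which combines irreducibility of $\p$ as a $\g$-module with Proposition \ref{PA.2}, an explicit triple in $\p$ witnessing the failure of the Jacobi identity for $[\cdot,\cdot]_\m$. If the $\mathfrak{k}$-component of $[\p,\p]$ vanished, $\m$ would be a Lie algebra, every element of $\p$ would act by derivations, your characterization would single out all of $\m$ rather than $\g$, and the bound $\dim\mathrm{Aut}(\m)=\dim G+\dim K$ would collapse; so this computation cannot be skipped, and your write-up gives no indication of how to carry it out.

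A second, smaller point (which you flag yourself) is the inclusion $Iso(K\bs H,o)\hookrightarrow\mathrm{Aut}(\m)$: this does not follow from invariance of the curvature tensor and its covariant derivatives, since $\m$ is attached to the reductive homogeneous structure rather than to the jet of the metric at $o$; the paper's route is Lemma \ref{L3.1}, expressing $[X,Y]_\m=2\left(\nabla_{X^+}Y^+\right)_{Ke}$ and then using naturality of $\nabla$ (Corollary \ref{C3.1}). Granting these two points, your group-level computation of $\mathrm{Aut}(\m)$ — restriction to $\mathrm{Aut}(\g)$, kernel inside the quaternionic commutant $\mathrm{End}_\g(\p)\cong\mathbb{H}$, and $|\nu|=1$ forced by $[\p,\p]_\m\neq 0$ — is a legitimate variant of the paper's Lie-algebra computation $Der(\m)=\widetilde{ad}(\g\oplus\mathfrak{k})$ (Proposition \ref{P3.3}) followed by algebraicity of $\mathrm{Aut}(\m)$ (Corollary \ref{C3.2}); note only that for finiteness of components you should observe that the image of the restriction map contains $\mathrm{Inn}(\g)=\mathrm{Aut}(\g)_0$ and is therefore a union of components of $\mathrm{Aut}(\g)$, since a bare subgroup of a group with finitely many components need not have finitely many. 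Your kernel argument and the final open-subgroup/dimension count agree with the paper's proof.
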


We may observe that $K \backslash H$ is a simply connected, non-compact, non-symmetric space whenever $p,q \geq 1$, so that the previously discussed known techniques does not apply. Our approach to this problem is via the affine structure of $K \backslash H$ induced from the Levi-Civita connection, the affine structure is coded into a non-associative algebra structure in the vector space $\m = T_{K e} (K \backslash H)$. Such construction is standard when we take the Lie group $H$ as a pseudo-Riemannian manifold so that the corresponding affine structure is coded in the Lie algebra $\h$ of the left invariant vector fields in $H$. If $H$ is semisimple, the adjoint representation realizes all the automorphisms of $\h$ at least in the connected component of the identity and thus $Iso(H)_0 = L(H) R(H)$. In our case, we use the fact that the affine structure $\m$ is invariant under the action of the simple Lie group $G$ so that we are able to compute the automorphism group of the non-associative algebra $\m$, using representation theory of $G$. We observe that this method is in principle applicable to any homogeneous space $S \backslash T$, where $(S \times L) \backslash T$ is a pseudo-Riemannian symmetric space where $T$ and $L$ are simple or semisimple Lie groups, thus we have a strategy for computing a wide family of homogeneous spaces. As a consequence of the previous theorem, we can control the fundamental group of a pseudo-Riemannian manifold $M$ that has $X$ as its universal covering, at least up to finite index, and we obtain

\begin{teo}\label{T3.2}
If $G$ acts isometrically and faithfully in a finite volume pseudo-Riemannian manifold $X$ whose universal covering is isometric to $K \bs H$ such that the $G$-action lifts to an action commuting with the right $H$-action, then there is a lattice $\Gamma \leq H$, an automorphism $\rho \in Aut(G)$ and an isometric finite covering
	\begin{displaymath}
		K \backslash H / \Gamma \rightarrow X,
	\end{displaymath}
that is $\rho(G)$-equivariant, in particular $X$ is complete.
\end{teo}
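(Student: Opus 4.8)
The idea is to use Theorem~\ref{T3.1} to convert the topology of $X$ into group data inside $H$. Let $p\colon K\bs H\to X$ be the universal covering; then $\pi_1(X)$ acts on $K\bs H$ as the group of deck transformations — freely, properly discontinuously, and (since $p$ is a local isometry) by isometries — and it is a discrete subgroup of $\mathrm{Iso}(K\bs H)$, so we may regard $\pi_1(X)\subset\mathrm{Iso}(K\bs H)$. The hypothesis supplies a lift of the $G$-action, i.e.\ a homomorphism $\lambda\colon G\to\mathrm{Iso}(K\bs H)$ with $\lambda(G)$ centralizing $R(H)$; it is faithful, since an element acting trivially on $K\bs H$ acts trivially on $X$. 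As $G$ is connected and $\mathrm{Iso}(K\bs H)$ has finitely many components (Theorem~\ref{T3.1}), $\lambda(G)$ lies in the identity component of $Z_{\mathrm{Iso}(K\bs H)}(R(H))\subset L(G)R(H)$. Since $L(G)$ and $R(H)$ commute elementwise and $Z(H)=\{\pm I\}$ with $R_{-I}=L_{-e_G}$ (the last equality read off the kernel $\{\pm(e_G,e_H)\}$ of $G\times H\to\mathrm{Iso}(K\bs H)$), we get $Z_{L(G)R(H)}(R(H))=L(G)\cdot R(Z(H))=L(G)$. Hence $\lambda(g)=L_{\rho(g)}$ for a faithful endomorphism $\rho$ of $G$, and $\rho\in\mathrm{Aut}(G)$ because $G$ is simple.

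Next I pin down $\pi_1(X)$. Since the $G$-action descends to $X=(K\bs H)/\pi_1(X)$, the connected group $G$ normalizes the discrete group $\pi_1(X)$, hence centralizes it, so $\pi_1(X)\subset Z_{\mathrm{Iso}(K\bs H)}(L(G))$. Put $\Gamma:=\pi_1(X)\cap\mathrm{Iso}(K\bs H)_0$; this has finite index in $\pi_1(X)$, and the same computation as above, with $L(G)$ in place of $R(H)$, gives $Z_{L(G)R(H)}(L(G))=R(H)\cdot L(Z(G))=R(H)$. Thus $\Gamma=R(\Gamma_0)$ for a discrete subgroup $\Gamma_0\le H$ (using that $R$ is injective with closed image). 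The deck action of $\Gamma$ is free and properly discontinuous, hence so is the $\Gamma_0$-action on $K\bs H$, and $p$ factors as $K\bs H\to K\bs H/\Gamma_0\xrightarrow{\ \phi\ }X$ with $\phi$ a covering of degree $[\pi_1(X):\Gamma]<\infty$. Therefore $K\bs H/\Gamma_0$ has finite pseudo-Riemannian volume; but the metric of $K\bs H$ is $R(H)$-invariant, so its volume density is the one pushed down from Haar measure on the unimodular group $H$ through the fibration $H\to K\bs H$ with compact fibre $K$, and finiteness of $\mathrm{vol}(K\bs H/\Gamma_0)$ means precisely that $\Gamma_0$ is a lattice in $H$.

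Finally, $L(\rho(G))$ commutes with $R(H)\supseteq R(\Gamma_0)$, so the lift $\lambda$ descends to $K\bs H/\Gamma_0$, and since both the original $G$-action on $X$ and this descended action are obtained from $\lambda$, the finite covering $\phi$ is $\rho(G)$-equivariant. Completeness of $X$ follows: $K\bs H$, being a homogeneous pseudo-Riemannian manifold with compact isotropy $R(K)$, is geodesically complete, and a pseudo-Riemannian manifold covered by a complete one is itself complete. The step I expect to be the main obstacle is the bookkeeping in the first two paragraphs: one must check that the centralizers, computed inside $\mathrm{Iso}(K\bs H)_0=L(G)R(H)$, collapse \emph{exactly} to $L(G)$ and to $R(H)$, which hinges on the identity $R_{-I}=L_{-e_G}$ encoded in the $\mathbb{Z}_2$ kernel of Theorem~\ref{T3.1}; a secondary point is verifying that the indefinite pseudo-Riemannian volume of $K\bs H/\Gamma_0$ is genuinely proportional to the Haar covolume of $\Gamma_0$.
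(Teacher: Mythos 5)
Your argument is correct and follows essentially the same route as the paper: Theorem \ref{T3.1} is used to see that the lifted $G$-action consists of left translations $L_{\rho(g)}$ (you compute the centralizer of $R(H)$ inside $Iso(K\bs H)_0$ directly, while the paper lifts to $G\times H$ and kills the right component via a commutator-in-$K$ argument, but these are the same computation, both hinging on the kernel $\{\pm(e_G,e_H)\}$), then the deck group is intersected with the identity component to get a finite-index subgroup identified with $R(\Gamma_0)$ for a discrete $\Gamma_0\le H$, and finally $\Gamma_0$ is a lattice because $K\bs H/\Gamma_0$ has finite volume and the fibration $H/\Gamma_0\rightarrow K\bs H/\Gamma_0$ has compact fibre $K$; this matches the paper's proof step for step.

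The only justification that is not valid as stated is your reason for completeness of $K\bs H$: in the pseudo-Riemannian setting, homogeneity plus compact (even trivial) isotropy does not imply geodesic completeness — for instance, the two-dimensional affine group carries incomplete left-invariant Lorentzian metrics. Completeness of $K\bs H$ is nevertheless true for a different reason: its metric is the one induced from the bi-invariant form $B$ on $H$ through the pseudo-Riemannian submersion $H\rightarrow K\bs H$, so the space is naturally reductive and the geodesics through the base point are the curves $t\mapsto K\exp(tX)$ with $X\in\m$, defined for all $t$; by homogeneity under $R(H)$ every geodesic is of this form up to translation, so $K\bs H$ is complete, and completeness then descends to $X$ exactly as you say.
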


As another consequence of the characterization of the automorphisms of the non-associative algebra $\m = \mathfrak{k} \backslash \h$, we obtain a uniqueness result in the possible embeddings $G \times K \hookrightarrow H$, modulo automorphisms of $H$ (Theorem \ref{TA.1}), therefore we get a uniqueness result on the structure of the diffeomorphism and isometry type of $K \bs H$

\begin{teo}\label{C3.3}
Consider $\varphi_0, \varphi : G \times K \hookrightarrow H$ two injective homomorphisms, then there exists an analytic diffeomorphism 
	\[ F : \varphi_0(K) \bs H \rightarrow \varphi(K) \bs H, \]
 such that if $\overline{h}$ is a pseudo-Riemannian metric in $\varphi(K) \bs H$ with $L(G) R(H) \subset Iso(\varphi(K) \bs H, \overline{h})$, then we can rescale $\overline{h}$ along the $G$-orbits and their orthogonal complements so that $F$ is an isometry when $\varphi_0(K) \bs H$ is considered with the metric induced from the bilinear form $B$.
\end{teo}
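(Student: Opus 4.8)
The plan is to produce $F$ as the map induced on the quotients by an automorphism of $H$ that matches the two copies of $K$, and then to reduce the metric assertion to a Schur-type computation for the isotropy representation carried by $\m$. By Theorem \ref{TA.1} the two injective homomorphisms $\varphi_0,\varphi\colon G\times K\hookrightarrow H$ are conjugate up to automorphisms, so there are $\alpha\in\mathrm{Aut}(H)$ and $\psi\in\mathrm{Aut}(G\times K)$ with $\alpha\circ\varphi_0=\varphi\circ\psi$. Since $G$ is simple and $K=Sp(1)$, the second factor is characteristic in $G\times K$ (the borderline case $G\cong Sp(1)$ being checked directly), hence $\psi=(\rho,\sigma)$ with $\rho\in\mathrm{Aut}(G)$, $\sigma\in\mathrm{Aut}(K)$, and in particular $\alpha(\varphi_0(K))=\varphi(K)$. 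Thus $\alpha$ descends to the analytic diffeomorphism
\[ F\colon \varphi_0(K)\bs H\longrightarrow \varphi(K)\bs H,\qquad F(\varphi_0(K)\,h)=\varphi(K)\,\alpha(h), \]
well defined and bijective because $\alpha$ carries $\varphi_0(K)$ onto $\varphi(K)$; a direct check gives $F\circ R_{h'}=R_{\alpha(h')}\circ F$ for $h'\in H$ and $F\circ L_g=L_{\rho(g)}\circ F$ for $g\in G$, so $F$ conjugates the group $L(G)R(H)$ acting on $\varphi_0(K)\bs H$ onto the group $L(G)R(H)$ acting on $\varphi(K)\bs H$.

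Write $h_B$ for the metric on $\varphi_0(K)\bs H$ induced by $B$; it is $L(G)R(H)$-invariant, and hence so is $g_0:=(F^{-1})^{*}h_B$ on $\varphi(K)\bs H$ by the equivariance of $F$. By hypothesis $\overline h$ is also $L(G)R(H)$-invariant, so $g_0$ and $\overline h$ are two such metrics on the homogeneous space $\varphi(K)\bs H$; it suffices to show that $g_0$ is obtained from $\overline h$ by rescaling along the $G$-orbits and along their orthogonal complements, for then rescaling $\overline h$ to $g_0$ makes $F$ an isometry from $(\varphi_0(K)\bs H,h_B)$ to $(\varphi(K)\bs H,g_0)$. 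Since $R(H)$ is transitive it is enough to compare $g_0$ and $\overline h$ at $o=\varphi(K)e$, where $T_oH\cong\m$, the tangent space to the $G$-orbit is the image $\g\subset\m$, its $B$-orthogonal complement is the summand $\p$ of the symmetric decomposition $\h=(\g\oplus\mathfrak{k})\oplus\p$, and the isotropy group of $L(G)R(H)$ at $o$ acts on $\m$ through a representation containing the adjoint action of both $\varphi(G)$ and $\varphi(K)$.

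Now I would invoke the decomposition of $\m$ already obtained in computing $\mathrm{Aut}(\m)$: as a $\varphi(G)$-module $\g$ is the adjoint representation, irreducible and self-dual with a one-dimensional space of invariant symmetric forms since $G$ is simple; as a $\varphi(G\times K)$-module $\p\cong\mathbb{H}^{k,l}$ is irreducible of real type, being a tensor product of two quaternionic-type representations, so it too carries only a one-dimensional space of invariant symmetric forms; and $\g\not\cong\p$, so there is no invariant cross term. Consequently every $L(G)R(H)$-invariant symmetric bilinear form on $\m$ is determined by its restrictions to $\g$ and to $\p$, which are $B$-orthogonal. Applying this to both $g_0$ and $\overline h$ gives $g_0|_{\g}=a\,\overline h|_{\g}$, $g_0|_{\p}=b\,\overline h|_{\p}$, and $g_0(\g,\p)=\overline h(\g,\p)=0$ for suitable nonzero $a,b$; this says exactly that $g_0$ is the rescaling of $\overline h$ by $a$ along the $G$-orbits and by $b$ along their orthogonal complements, and transporting the identity over $\varphi(K)\bs H$ by $R(H)$ completes the argument.

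The main obstacle is this last Schur computation: one must know the precise isotropy decomposition of $\m$ — in particular that $\p$ is real-irreducible rather than a sum of lower-dimensional pieces, and that no $\varphi(G)$-submodule of $\g$ is isomorphic to one of $\p$ — together with the identification of the relevant commutant algebras; but all of this is supplied by the representation-theoretic analysis of $\mathrm{Aut}(\m)$ carried out earlier. A minor preliminary point is that $\psi$ can be taken to preserve the factor $K$, which is automatic except when $G\cong Sp(1)$, a case to be dispatched separately.
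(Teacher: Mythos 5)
Your proposal follows essentially the same route as the paper: apply Theorem \ref{TA.1} to obtain an automorphism of $H$ carrying one copy of $K$ to the other (hence the diffeomorphism $F$), and then use the decomposition $\m=\g\oplus\p$ into non-isomorphic irreducible modules together with the Schur-type uniqueness of invariant symmetric forms (the paper's Lemma \ref{R1.4}) to conclude that any $L(G)R(H)$-invariant metric is a blockwise rescaling of the one induced by $B$. Your extra care in checking that the automorphism matches the $K$-factors (via normality of $\varphi_0(K)$ in $\varphi_0(G\times K)$, with the $G\cong Sp(1)$ borderline flagged) is a point the paper glosses over, but it does not change the argument.
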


Consider a non-compact simple group $Sp(k,l)$. Our study of isometric $Sp(k,l)$-actions on general pseudo-Riemannian manifolds relies on Gromov and Zimmer's machinery, that is, we study the algebra $\mathcal{H}$ centralizing the $Sp(k,l)$-action and using representation theory of $Sp(k,l)$ on the first non-trivial dimensions we are able to completely characterize $\mathcal{H}$ consequently getting the following general result

\begin{teo}\label{main4}
Suppose $M$ is a connected, complete, finite volume, pseudo-Riemannian manifold that admits an analytic isometric action of $Sp(k,l)$ with a dense orbit, such that $dim(M) \leq n(2n+5)$, where $n = k+l \geq 3$, then either one of the following cases hold
	\begin{enumerate}
\item The universal covering of the manifold denoted by $\widetilde{M}$ is equivariantly isometric to the pseudo-Riemannian product $Sp(k,l) \times N$ for some  simply connected pseudo-Riemannian manifold $N$.

\item For $H$ either $Sp(k+1,l)$ or $Sp(k,l+1)$, there is a lattice $\Gamma \leq H$, an automorphism $\phi : G \rightarrow G$ and a $\phi(G)$-equivariant finite covering 
	\begin{displaymath}
		\widehat{M} \rightarrow M
	\end{displaymath}
for the double quotient pseudo-Riemannian manifold $\widehat{M} = Sp(1) \backslash H / \Gamma$. Moreover, the pseudo-Riemannian metric in $M$ can be rescaled along the foliation defined by the $Sp(k,l)$-orbits and the orthogonal complements so that the finite covering is a pseudo-Riemannian covering map.
	\end{enumerate}
\end{teo}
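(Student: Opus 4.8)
The plan is to run the Gromov--Zimmer machinery on the $Sp(k,l)$-action and then import Theorems \ref{T3.1}, \ref{T3.2} and \ref{C3.3} to pin down the second case. First I would invoke Gromov's theorem (Theorem \ref{T4.1} in the excerpt) to produce, on a dense open set $U \subseteq \widetilde M$, a finite-dimensional Lie algebra $\mathcal{H}$ of Killing fields centralizing the $G$-action, $G = Sp(k,l)$, whose evaluation map is onto at a generic point; since the $G$-orbit is dense, $\mathcal{H} \oplus \mathfrak{g}$ surjects onto $T_xU$ at a generic $x$, giving the dimension bound $\dim\mathcal{H} + \dim G \geq \dim M$. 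The next step is purely algebraic: decompose $\mathcal{H}$ as a $G$-module. Using that $\dim G = n(2n+1)$ and $\dim M \leq n(2n+5)$, the complementary ``room'' is at most $\dim M - \dim G \leq 4n$, and with $n \geq 3$ the only nontrivial irreducible $\mathfrak{g}$-modules small enough to fit are the trivial module and the standard module $\mathbb{H}^{k,l} \cong \R^{4n}$ (the quaternionic standard representation of real dimension $4n$); all other nontrivial irreducibles of $\mathfrak{sp}(k,l)$ have dimension exceeding $4n$ when $n \geq 3$. So as a $G$-module $\mathcal{H}$ is a sum of copies of the trivial and standard modules, with at most one standard summand.

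Then I would analyze the Lie algebra structure on $\mathfrak{g} \oplus \mathcal{H}$ compatible with this module decomposition. If $\mathcal{H}$ contains no standard summand, then $\mathcal{H}$ centralizes $\mathfrak{g}$ entirely and the evaluation forces $T\widetilde M$ to split $G$-equivariantly as the tangent to the $G$-orbit plus a $G$-trivial complement; a standard integrability/de Rham argument (completeness of $M$ is used here) then gives case (1), $\widetilde M \cong Sp(k,l) \times N$ as a pseudo-Riemannian product. If $\mathcal{H}$ does contain the standard summand $V \cong \mathbb{H}^{k,l}$, I would identify $\mathfrak{g} \oplus \mathcal{H}$, or rather the subalgebra it generates together with the isotropy, with the Lie algebra $\h$ of $H = Sp(k+1,l)$ or $Sp(k,l+1)$: indeed $\mathfrak{sp}(k+1,l) = \mathfrak{sp}(k,l) \oplus \mathfrak{sp}(1) \oplus (\mathbb{H}^{k,l}\otimes_{\mathbb{H}}\mathbb{H})$ as an $\mathfrak{sp}(k,l)\oplus\mathfrak{sp}(1)$-module, and the bracket of two standard vectors lands in $\mathfrak{sp}(k,l)\oplus\mathfrak{sp}(1)$; matching Jacobi identities and the requirement that the bracket be $G$-equivariant leaves, up to scaling on $V$ and the choice of signature, exactly this $\h$ (the $\mathfrak{sp}(1)$ factor is the isotropy $\mathfrak{k}$). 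This is where Theorem \ref{TA.1} (uniqueness of the embedding $G\times K \hookrightarrow H$ modulo $\mathrm{Aut}(H)$), referenced in the excerpt, does the bookkeeping. Having identified $\mathcal{H}$, the simply connected Lie group $H$ acts locally on $\widetilde M$; completeness and analyticity let me integrate this to a genuine transitive $H$-action, so $\widetilde M \cong Sp(1)\backslash H$ $G$-equivariantly as affine manifolds, and by Theorem \ref{C3.3} the metric agrees with the standard one after rescaling along the $G$-orbits and their orthogonal complements.

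Finally, descending to $M$: the deck group $\pi_1(M)$ acts on $\widetilde M \cong Sp(1)\backslash H$ by isometries commuting with $G$, hence by Theorem \ref{T3.1} it sits inside $L(G)R(H)$ modulo finite index; projecting to the $R(H)$-factor and using that $M$ has finite volume, the image is a lattice $\Gamma \leq H$ (after passing to a finite-index subgroup, which produces the finite covering $\widehat M \to M$), and the residual $L(G)$-part is absorbed into the automorphism $\phi \in \mathrm{Aut}(G)$ exactly as in Theorem \ref{T3.2}. This yields $\widehat M = Sp(1)\backslash H/\Gamma$ with the asserted $\phi(G)$-equivariant finite covering and rescaled metric, i.e.\ case (2).

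The main obstacle I expect is the module-theoretic step combined with the Jacobi-identity analysis: one must rule out, for $n \geq 3$, all configurations of trivial and standard summands in $\mathcal{H}$ other than the two that occur, and in the mixed case show that the only $G$-equivariant Lie bracket extending $\mathfrak{g}$ by a standard module (plus trivial summands) is the one coming from $\mathfrak{sp}(k\pm 1,l\mp 0,\dots)$ — this requires knowing the relevant spaces of $G$-invariant bilinear maps $V\times V \to \mathfrak{g}\oplus(\text{trivial})$ and $\mathfrak{g}\times V \to V$, and checking that Jacobi kills all but a one-parameter family. Secondarily, integrating the local $H$-action to a global one and handling the finite-volume/lattice step rigorously (as opposed to on a dense open set) needs the completeness and analyticity hypotheses in an essential way, much as in \cite{Q-O1, Q-O2}.
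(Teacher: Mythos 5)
Your overall strategy (Gromov centralizer plus low-dimensional $\mathfrak{sp}(k,l)$-representation theory, then descent through Theorems \ref{T3.1} and \ref{T3.2}) is the paper's strategy, but there is a genuine gap at the pivotal algebraic step. You claim that once the standard summand $V\cong\mathbb{H}^{k,l}$ appears in $\mathcal{H}$, ``matching Jacobi identities'' leaves only the bracket of $\mathfrak{sp}(k+1,l)$ or $\mathfrak{sp}(k,l+1)$, so that the presence or absence of a standard summand is the dichotomy separating cases (2) and (1). This is false as stated: the semidirect products in which $V$ (possibly together with the trivial part) is an abelian or solvable ideal of $\mathcal{H}$ satisfy the Jacobi identity and are perfectly $G$-equivariant, so no amount of equivariance-plus-Jacobi bookkeeping rules them out. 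The paper's Lemma \ref{L4.2} records exactly these possibilities ($Rad(\mathcal{H})=\mathcal{H}_0(x)\oplus\mathcal{V}(x)$, or $\mathcal{V}(x)$ an abelian ideal, or the symmetric pair of symplectic type), and the point of Lemma \ref{L4.3} is that the degenerate, non-simple configurations are eliminated \emph{geometrically}: in those cases one builds integral manifolds of $T\mathcal{O}^\perp$ from the orbit map of the group $G\ltimes R$, so the normal distribution is integrable and one lands in case (1) via Theorem \ref{T4.2}, not in case (2). Thus the correct dichotomy is integrability versus non-integrability of $T\mathcal{O}^\perp$, and your proposal is missing the argument that connects the algebraic degeneracy of $\mathcal{H}$ to integrability; without it, a manifold whose centralizer is, say, $\mathfrak{g}\ltimes\mathbb{H}^{k,l}$ would be misclassified.

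A second, related omission: your dimension count only controls $\mathcal{V}(x)$ (the part of $\mathcal{H}$ that evaluates onto $T_x\mathcal{O}^\perp$), not the kernel $\mathcal{H}_0(x)=\mathrm{Ker}(ev_x)$, which a priori could be large and is not a sum of trivial and standard modules by any dimension-of-$M$ argument (note also that $\mathcal{H}$ always contains the adjoint summand $\mathcal{G}(x)$). The paper bounds $\mathcal{H}_0(x)$ by the faithful isotropy representation $\lambda_0:\mathcal{H}_0(x)\hookrightarrow\mathfrak{so}(T_x\mathcal{O}^\perp)$ together with Corollary \ref{C1.4}: when $\mathcal{V}(x)$ is a nontrivial module, $\lambda_0(\mathcal{H}_0(x))$ lies in the commutant $\mathfrak{sp}(1)$, so $\dim\mathcal{H}_0(x)\le 3$; this bound is what makes the symmetric-pair recognition (via Berger's classification, or via Lemma \ref{LA.2} and the deformation rigidity you cite from Theorem \ref{TA.1}) applicable, and it also guarantees $[\mathcal{V}(x),\mathcal{V}(x)]\subset\mathcal{G}(x)\oplus\mathcal{H}_0(x)$, i.e.\ that one has a symmetric pair at all. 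Finally, two smaller points you gloss over that the paper needs: the rescaling of the metric must be shown to preserve finite volume (Lemma \ref{L4.5}) before Theorem \ref{T3.2} can be applied, and the identification $\widetilde{M}\cong Sp(1)\backslash H$ is obtained by showing the orbit map $H_0\backslash H\to\widetilde{M}$ is a local isometry for the rescaled metric and then invoking completeness and simple connectedness, rather than by an appeal to Theorem \ref{C3.3}.
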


We observe that in order to have non-compactness of $Sp(k,l)$, we need $k,l \geq 1$ and so $n = k+1 \geq 2$. If $n = 2$ we have the exceptional isomorphism $\mathfrak{sp}(1,1) \cong \mathfrak{so}(4,1)$, but this case was already covered in general for isometric actions of non-compact orthogonal groups in  \cite{Q-O1}, thus our hypothesis on the lower bound $n \geq 3$. 

The structure of the paper as follows: in section \ref{Symplectic}, we give a brief recall of some basic facts on the structure and representation theory of complex and quaternionic symplectic Lie groups and how they interact with orthogonal maps on their first non-trivial representation, summarized in Corollary \ref{C1.4}. The last part of the section consists of the study of the deformation rigidity of the embedding $G \times K \hookrightarrow H$ summarized in Theorem \ref{TA.1}. In section \ref{Stiefel}, we study the pseudo-Riemannian structure of $K \bs H$, where we compute the automorphism group of the non-associative algebra associated to the affine structure of $K \bs H$ and prove the Theorems \ref{T3.1}, \ref{T3.2} and \ref{C3.3}. In section \ref{arbitrary} we study the Lie algebra structure of the centralizer algebra $\mathcal{H}$ using the representation theory of $Sp(k,l)$ obtained in section \ref{Symplectic} and prove Theorem \ref{main4}.

\subsection*{Acknowledgements}
I would like to thank Raul Quiroga-Barranco for proposing the problem and providing the main ideas that led to this work, I am very grateful for the many fruitful discussions with him during the preparation of this paper. I thank CONACyT and CIMAT for the scholarships provided during my PhD studies and the preparation of my thesis in which this work is based.

\section{Symplectic groups and algebras}\label{Symplectic}

\subsection{Structure theory for complex symplectic algebras}\label{Sec1.1}

Consider the canonical symplectic structure in $\mathbb{C}^{2n}$ given by , 
	\[ \omega(x,y) = x^t J y, \qquad J = \left(\begin{array}{cc} 0 & I_n \\ -I_n & 0 \end{array}\right), \]
where elements of $\mathbb{C}^{2n}$ are thought as column vectors and $I_n$ is the $n \times n$-identity matrix. The \textbf{complex symplectic group} $Sp(2n,\mathbb{C})$ is the subgroup of $GL_{2n}(\mathbb{C})$ preserving $\omega$ and its Lie algebra is the complex symplectic Lie algebra  characterized by the matrix relation $X J + J X^t = 0$, so it is given by
	\begin{displaymath}
		\mathfrak{sp}(2n,\mathbb{C}) = \left\{ \left(\begin{array}{cc} A & B \\ C & - A^t
		\end{array}\right) :  \begin{array}{c} A \in  M(n, \mathbb{C}), \\
					 B, C \ \textrm{symmetric} \end{array} \right\}.
	\end{displaymath}
A finite dimensional representation of $\mathfrak{sp}(2n,\mathbb{C})$ into a vector space $V$, also called a $\mathfrak{sp}(2n,\mathbb{C})$-module structure, is a Lie algebra homomorphism 
	\[ \mathfrak{sp}(2n,\mathbb{C}) \rightarrow \mathfrak{gl}(V), \]
and so, a linear action of the elements of $\mathfrak{sp}(2n,\mathbb{C})$ in $V$. If $V$, $W$ are two $\mathfrak{sp}(2n,\mathbb{C})$-modules, we say that a linear map $T: V \rightarrow W$ is a $\mathfrak{sp}(2n,\mathbb{C})$-homomorphism if $T(X \cdot v) = X \cdot T(v)$, for the corresponding actions of $X$ on $V$ and $W$ and every $X \in \mathfrak{sp}(2n,\mathbb{C})$, $v \in V$, so that an isomorphism of $\mathfrak{sp}(2n,\mathbb{C})$-modules is just a bijective homomorphism. A representation of $\mathfrak{sp}(2n,\mathbb{C})$ is called irreducible if it doesn't have non-trivial invariant subspaces and every representation $V$ decomposes as a direct sum $V = V_1 \oplus ... \oplus V_r$, where $V_j$ is $\mathfrak{sp}(2n,\mathbb{C})$-invariant and irreducible subspace, so that irreducible representations are the building blocks of representation theory.

Denote by $e_{i,j}$ the $n \times n$ matrix that has $1$ in the $(i,j)$-coordinate and 0 elsewhere and define
	\[
		\mathfrak{h} = \bigoplus_{i=1}^n \mathbb{C} X_{i,i}, \qquad 
		X_{i,j} := \left(\begin{array}{cc}
	e_{i,j} & 0 \\
	0 & - e_{j,i}
		\end{array}\right),
	\]
then $\h$ consists precisely of the diagonal matrices in $\mathfrak{sp}(2n,\mathbb{C})$. Consider $\{\varepsilon_j\}$ the basis of $\h^*$ such that $\varepsilon_j(X_{i,i}) = \delta_{ij}$, then the elements $\varepsilon_j$ are the weights of $\mathbb{C}^{2n}$, i.e. the simultaneous spectrum of the action of $\h$, more precisely, if $\{e_j, e^j\}$ is the canonical basis of $\mathbb{C}^{2n}$, then
	\[ X e_j = \varepsilon_j(X) e_j, \qquad X e^j = -\varepsilon_j(X) e^j, \qquad \forall \ X \in \h, \ 1 \leq j \leq n. \]
In general, $\h$ always acts by simultaneously diagonalizable operators in every representation, where the simultaneous spectrum is encoded in the weights of the representation that are elements of $\h^*$ and every irreducible representation is determined by a unique weight called the highest weight of the representation. To be more precise, define the elements
	\begin{displaymath}
		\omega_k = \varepsilon_1 + ... + \varepsilon_k, \quad 1 \leq k \leq n,
	\end{displaymath}
called simple weights and consider the dominant weight lattice of $\mathfrak{sp}(2n,\mathbb{C})$
	\begin{displaymath}
		\Gamma^d = \left\{k_1 \omega_1 + ... + k_n \omega_n : k_1, ... , k_n \in \mathbb{N} \cup \{0\} \right\} \subset \h^*,
	\end{displaymath}
then the complex irreducible representations of $\mathfrak{sp}(2n,\mathbb{C})$ are in 1-1 correspondence with the elements of $\Gamma^d$ and an irreducible representation with highest weight $\lambda \in \Gamma^d$ is denoted by $V(\lambda)$. For details and further properties in the structure theory and classification of representations of $\mathfrak{sp}(2n,\mathbb{C})$, see \cite{G-W} or \cite{Kn}.

\begin{remark}\label{examples}
Here we give explicit descriptions of the $\mathfrak{sp}(2n,\mathbb{C})$-representations associated to the simple weights
	\begin{enumerate}
		\item If we take $V = \mathbb{C}^{2n}$, then the canonical basis $\{e_j, e^j : 1 \leq j \leq n\}$ is a basis of weight vectors, where $e_j$ and $e^j$ have corresponding weights $\varepsilon_j$ and $-\varepsilon_j$ respectively, moreover $\varepsilon_1$ is the highest weight so we have $\mathbb{C}^{2n} \cong V(\omega_1)$.

		\item If we take the kth-alternating power $V = \Lambda^k \mathbb{C}^{2n}$, then there is a $\mathfrak{sp}(2n,\mathbb{C})$-homomorphism
			\[ \varphi_k : \Lambda^k \mathbb{C}^{2n} \rightarrow \Lambda^{k-2} \mathbb{C}^{2n}, \]
that can be defined as a contraction using the symplectic form $\omega$, for the precise definition see \cite{F-H}, such that $V(\omega_k) = Ker(\varphi_k)$. As a
consequence of this we have the decomposition $\Lambda^k \mathbb{C}^{2n} \cong V(\omega_k) \oplus \Lambda^{k-2} \mathbb{C}^{2n}$ and an explicit formula for its dimension 
	\[ \textrm{dim}(V(\omega_k)) = {2n \choose k} - {2n \choose k - 2}. \]
As a particular case, we have $\Lambda^2 \mathbb{C}^{2n} \cong V(\omega_2) \oplus \mathbb{C} \theta$, where $\theta = \sum_{j=1}^n e_j \wedge e^j$ and
	\[ V(\omega_2) = \{x \wedge y \in \Lambda^2 \mathbb{C}^{2n} : \omega(x,y) = 0 \}. \]

		\item If we take the symmetric power $V = S^2 \mathbb{C}^{2n}$, then 
	\[ S^2 \mathbb{C}^{2n} \rightarrow \mathfrak{sp}(2n,\mathbb{C}), \qquad x \odot y \mapsto (x y^t + y x^t) J \]
is an isomorphism of $\mathfrak{sp}(2n,\mathbb{C})$-representations, where $\mathfrak{sp}(2n,\mathbb{C})$ is considered with the adjoint representation. Moreover $e_1 \odot e_1$ is a highest weight vector, with weight $2 \omega_1$, so we have $S^2 \mathbb{C}^{2n} = V(2 \omega_1) \cong \mathfrak{sp}(2n,\mathbb{C})$.
	\end{enumerate}
\end{remark}
The following Lemma gives us a first estimation on dimensions of representations of $\mathfrak{sp}(2n,\mathbb{C})$.
\begin{lema}\label{L1.4}
If $n > 3$ and $3 \leq k \leq n$, then 
	\begin{displaymath}
		\textrm{dim}(V(\omega_k)) > n(2n + 1) = \textrm{dim}(\mathfrak{sp}(2n,\mathbb{C})).
	\end{displaymath}
\end{lema}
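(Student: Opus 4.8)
The plan is to reduce the statement to the explicit dimension formula $\dim V(\omega_k) = \binom{2n}{k} - \binom{2n}{k-2}$ recorded in Remark~\ref{examples}, and then to produce a lower bound for that quantity which is uniform in $k$ over the whole range $3 \le k \le n$.

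First I would use that $j \mapsto \binom{2n}{j}$ is strictly increasing on $\{0,1,\dots,n\}$. Since $3 \le k \le n$ forces $k-2 < k \le n$, we have $\binom{2n}{k-2} < \binom{2n}{k}$, so we may factor
\[
	\dim V(\omega_k) = \binom{2n}{k}\left(1 - \frac{\binom{2n}{k-2}}{\binom{2n}{k}}\right), \qquad \frac{\binom{2n}{k-2}}{\binom{2n}{k}} = \frac{k(k-1)}{(2n-k+1)(2n-k+2)}.
\]
Because $k \le n$, the numerator is at most $n(n-1)$ and the denominator at least $(n+1)(n+2)$, so the ratio is at most $\frac{n(n-1)}{(n+1)(n+2)}$, and therefore
\[
	\dim V(\omega_k) \ge \binom{2n}{k}\cdot\frac{4n+2}{(n+1)(n+2)} \ge \binom{2n}{3}\cdot\frac{4n+2}{(n+1)(n+2)},
\]
where the last inequality is again monotonicity of $\binom{2n}{\cdot}$ together with $3 \le k \le n$. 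This is the only place where the non-monotonicity of $k \mapsto \dim V(\omega_k)$ (it decreases again as $k \to n$) has to be dealt with: factoring out $\binom{2n}{k}$ before estimating is precisely what makes the bound work for all $k$ at once.

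Next I would substitute $\binom{2n}{3} = \frac{2n(2n-1)(n-1)}{3}$, turning the last lower bound into $\frac{4n(2n-1)(n-1)(2n+1)}{3(n+1)(n+2)}$. Hence $\dim V(\omega_k) > n(2n+1)$ follows from the elementary polynomial inequality $4(2n-1)(n-1) > 3(n+1)(n+2)$, equivalently $5n^2 - 21n - 2 > 0$, which holds for every $n \ge 5$. This disposes of all $k$ simultaneously whenever $n \ge 5$.

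Finally the single remaining value $n = 4$, where $k \in \{3,4\}$, I would settle by the direct computations $\dim V(\omega_3) = \binom{8}{3} - \binom{8}{1} = 48$ and $\dim V(\omega_4) = \binom{8}{4} - \binom{8}{2} = 42$, both strictly larger than $\dim \mathfrak{sp}(8,\mathbb{C}) = 4\cdot 9 = 36$. I do not expect a genuine obstacle here; the only point requiring a little care is the uniformity in $k$, handled as above by the factoring trick together with a separate inspection of $n = 4$.
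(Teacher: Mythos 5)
Your proof is correct: the ratio computation $\binom{2n}{k-2}/\binom{2n}{k}=\frac{k(k-1)}{(2n-k+1)(2n-k+2)}$, the bound by $\frac{n(n-1)}{(n+1)(n+2)}$ using $k\le n$, the monotonicity step $\binom{2n}{k}\ge\binom{2n}{3}$ for $3\le k\le n$, the reduction to $5n^2-21n-2>0$ for $n\ge 5$, and the direct check of $n=4$ (dimensions $48$ and $42$ against $36$) all hold, and the strictness survives the chain of inequalities. This is, however, a genuinely different route from the paper's. The paper starts from the same formula $D_k^n=\binom{2n}{k}-\binom{2n}{k-2}$ but then argues by cases in $k$: it disposes of $k=3$ by showing the explicit cubic $D_3^n-n(2n+1)=\frac{4}{3}n\left(n-\frac{7}{2}\right)\left(n+\frac{1}{2}\right)$ is positive, rewrites $D_k^n$ for $k\ge 4$ as $\frac{2n(2n-1)\cdots(2n-k+3)}{k!}\left(4n^2-4nk+6n-2k+2\right)$, bounds the quadratic factor below via $k\le n$, and then verifies $\frac{4(2n-1)\cdots(2n-k+3)}{k!}>1$ separately for $k=4$, $k=5$, and $k\ge 6$, after having checked the small values of $n$ directly. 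Your factoring of $\binom{2n}{k}$ and uniform bound on the ratio collapses all of that case analysis into a single quadratic inequality in $n$ plus the lone exceptional value $n=4$, which is shorter and makes the uniformity in $k$ transparent; what the paper's version buys in exchange is an explicit product expansion of $D_k^n$ that exhibits how the dimension grows with $k$, but for the purpose of this lemma your argument is leaner and equally rigorous.
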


\begin{proof}
Denote $D_k^n = {2n \choose k} - {2n \choose k - 2}$, the inequality can be verified directly in the first cases so that we can suppose $n \geq 6$, and by observing that the function
	\begin{displaymath}
		f(n) = D_3^n - n(2n + 1) = \frac{4}{3} n \left( n - \frac{7}{2} \right) \left(n + \frac{1}{2}\right)
	\end{displaymath}
is always positive when $n > 3$, we can further suppose $k \geq 4$. We have the expresion
	\begin{displaymath}
		D_k^n = \frac{2n(2n - 1) \cdots (2n - k + 3)}{k!} \left( 4 n^2 - 4nk + 6n - 2k + 2 \right),
	\end{displaymath}
and an immediate inequality 
	\begin{displaymath}
		\frac{n}{2}(4 n^2 - 4nk + 6n - 2k + 2) \geq \frac{n}{2}(4 n^2 - 4n^2 + 6n - 2n + 2) = n(2n +1),
	\end{displaymath}
that follows from the condition $n \geq k$, thus we only need to show that 
	\begin{displaymath}
		\frac{4(2n-1)(2n-2)\cdots(2n-k+3)}{k!} > 1,
	\end{displaymath}
so for example for $k= 4$ and $k=5$ we have
	\begin{displaymath}
		\frac{4(2n-1)}{4!} \geq \frac{7}{6}, \quad \frac{4(2n-1)(2n-2)}{5!} \geq \frac{12}{5}.
	\end{displaymath}
For every $n \geq k > 5$ we have
	\begin{displaymath}
		(2n - k + 5)(2n - k + 4)(2n - k + 3) \geq (n + 5)(n + 4)(n + 3) > 6!, 
	\end{displaymath}
so by adding $k - 6$ factors of the form $(2n-k+s) > s+1$, we obtain
	\begin{displaymath}
		(2n-1)\cdots(2n-k+3) > k!,
	\end{displaymath}
and the result follows.
\end{proof}

In general, the dimension of $V(\lambda)$ can be computed using Weyl's dimension formula \cite[Ch. 7, Exer. 10]{G-W} to obtain 
	\begin{equation}\label{WDF}
		\begin{array}{rcl}
		\textrm{dim}(V(\lambda)) & = & \prod_{1 \leq i < j \leq n} \left\{ 1 + \frac{m_i + \cdots + m_{j-1}}{j-i} \right\} \\
	& \times & \prod_{1 \leq i < j \leq n} \left\{ 1 + \frac{m_i + \cdots + m_{j-1} + 2(m_j + \cdots + m_n) }{2n + 2 - j - i } \right\} \\
	& \times & \prod_{1 \leq i \leq n} \left\{ 1 + \frac{m_i + \cdots + m_n}{n+1-i} \right\},
		\end{array}
	\end{equation}
where $\lambda = m_1 \omega_1 + \cdots m_n \omega_n $, using this formula we get an estimation on the possible representations under the dimension of the algebra.

\begin{cor}\label{C1.2}
Take $\lambda \in \Gamma^d$ a highest weight such that
	\[ \textrm{dim}(V(\lambda)) \leq \textrm{dim}(\mathfrak{sp}(2n,\mathbb{C})) = n(2n+1) \]
then $\lambda = \omega_1$, $\omega_2$ or $2 \omega_1$ if $n \neq 3$. If $n = 3$, $\lambda$ can be all the previous cases plus
	\begin{displaymath}
		\textrm{dim}(V(\omega_2)) =  \textrm{dim}(V(\omega_3)) < n(2n+1).
	\end{displaymath}
\end{cor}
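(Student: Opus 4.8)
The plan is to bound $\dim V(\lambda)$ from below by the dimension of a suitable ``fundamental'' subcase and then exploit monotonicity, so that only finitely many $\lambda$ need to be checked by hand. First I would reduce to the case where $\lambda = m_1\omega_1 + \cdots + m_n\omega_n$ has $\sum m_i$ small: if $V(\mu)$ occurs in $V(\lambda)\otimes V(\nu)$ with $\lambda = \mu+\nu$ a splitting into nonzero dominant weights, then $\dim V(\lambda)$ is at least the dimension of the largest irreducible component, and one checks using the cases in Remark \ref{examples} that $\dim V(\omega_1)=2n$, $\dim V(2\omega_1)=n(2n+1)$ and $\dim V(\omega_2)=2n^2-n-1$, all of which are already of size comparable to $\dim\mathfrak{sp}(2n,\C)$; more usefully, from Weyl's formula \eqref{WDF} the dimension is strictly increasing in each coordinate $m_i$ (each factor is $\geq 1$ and at least one factor strictly increases when an $m_i$ is raised), so it suffices to rule out the minimal ``new'' weights: $\omega_3$ (handled for $n>3$ by Lemma \ref{L1.4}, and for $n=3$ it coincides with $\omega_2$), $\omega_1+\omega_2$, $3\omega_1$, and $2\omega_2$, $\omega_1+\omega_3$, $\omega_4$, $2\omega_1+\omega_2$, etc. — anything larger than these contains one of them ``below'' it and hence has larger dimension.

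Next I would compute $\dim V(\lambda)$ for each of these minimal candidates via \eqref{WDF} as an explicit polynomial in $n$ and compare with $n(2n+1)$. For instance $\dim V(\omega_1+\omega_2) = \tfrac{(2n+1)(2n-1)(n-1)(2n)}{?}$ — more precisely one gets a cubic-or-higher polynomial in $n$ whose leading term already beats $2n^2+n$, so the inequality $\dim V(\lambda) > n(2n+1)$ holds for all $n$ past a tiny threshold, and the finitely many small $n$ (say $n \leq 5$) are verified by direct substitution. Similarly $\dim V(3\omega_1)$ and $\dim V(\omega_4)$ give polynomials of degree $3$ and $4$ respectively in $n$, each exceeding $n(2n+1)$ for $n\geq 3$ after checking the base cases. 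The key monotonicity observation — that raising any $m_i$ strictly increases $\dim V(\lambda)$ — is what lets me truncate the list of candidates to a finite set; I would state and briefly justify it from the product formula \eqref{WDF} since every factor is of the form $1 + (\text{nonneg linear combination of the }m_i)/(\text{positive integer})$.

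For the special case $n=3$ I would treat it separately: here the rank is $3$, the exceptional coincidence $\dim V(\omega_2)=\dim V(\omega_3)=14 < 21 = \dim\mathfrak{sp}(6,\C)$ holds (this is a well-known low-rank accident, verifiable from \eqref{WDF} directly), and I would then check that $\omega_1+\omega_2$, $3\omega_1$, $\omega_1+\omega_3$, $2\omega_2$, $2\omega_3$, $\omega_2+\omega_3$ all have dimension exceeding $21$ by direct computation, leaving exactly $\omega_1$, $\omega_2$, $\omega_3$, $2\omega_1$ as the admissible weights. Assembling the two cases gives the statement.

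The main obstacle I anticipate is organizational rather than deep: making sure the list of ``minimal candidate weights'' to check is genuinely exhaustive, i.e. that every dominant weight not in $\{\omega_1,\omega_2,2\omega_1\}$ (resp.\ plus $\omega_3$ when $n=3$) dominates — in the sense of the monotonicity of \eqref{WDF} — one of the finitely many weights I explicitly compute. This requires a clean combinatorial argument that any $\lambda$ with $\sum m_i \geq 3$, or with some $m_i \geq 1$ for $i\geq 3$ (and $i\geq 4$ when $n=3$), has all coordinates componentwise $\geq$ those of one of the small candidates, so its dimension is at least as large; once that reduction is pinned down, the remaining work is a handful of polynomial inequalities in $n$ plus base-case arithmetic.
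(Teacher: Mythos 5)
Your proposal is correct and follows essentially the same route as the paper: strict monotonicity of the Weyl dimension formula \eqref{WDF} in the coefficients $m_i$, Lemma \ref{L1.4} to exclude $\omega_k$ for $k\geq 3$ when $n>3$, explicit dimension computations for the remaining minimal candidates $\omega_1+\omega_2$, $2\omega_2$ (and $3\omega_1$, which the paper dispatches by noting $2\omega_1$ already saturates the bound), and a separate direct check for $n=3$. The exhaustiveness reduction you worry about is exactly the componentwise-domination argument the paper uses implicitly, so no gap remains.
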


\begin{proof}
We can see directly from $(\ref{WDF})$ that the dimension is strictly increasing as a function of weights and if $n \leq 3$, we can verify directly using $(\ref{WDF})$ that
	\[	\textrm{dim}(V(\omega_i + \omega_j)) > n(2n+1), \]
unless $i = j = 1$. If $n > 3$, Lemma \ref{L1.4} excludes the fundamental weights $\omega_k$, for $k \geq 3$. This reduces the calculations to weights that are linear combinations of $\omega_1$ and $\omega_2$, but 
	\begin{displaymath}
		\textrm{dim}(V(2 \omega_2)) = \frac{n(n-1)(2n-1)(2n+3)}{3} > n(2n + 1)
	\end{displaymath}
and
	\begin{displaymath}
		\textrm{dim}(V(\omega_1 + \omega_2)) = \frac{8 n (n-1)(n+1)}{3} > n(2n + 1),
	\end{displaymath}
so the only posibilities are $k \omega_1$ and $\omega_2$, given that $2 \omega_1$ already has the dimension of $\mathfrak{sp}(2n,\mathbb{C})$, the result follows.
\end{proof}

We add a technical Lemma that we will need in the next section

\begin{lema}\label{L1.1}
The following is an isomorphism of $\mathfrak{sp}(2n,\mathbb{C})$-modules
	\[	\Lambda^2 V(\omega_2) \cong V(2 \omega_1) \oplus V(\omega_1 + \omega_3). \]
\end{lema}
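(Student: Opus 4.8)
The plan is to pin down the decomposition by producing the two claimed summands explicitly and then checking that their dimensions already exhaust $\Lambda^{2}V(\omega_{2})$. First I would locate the top of $\Lambda^{2}V(\omega_{2})$. The weights of $V(\omega_{2})\subset\Lambda^{2}\mathbb{C}^{2n}$ are the $\pm\varepsilon_{i}\pm\varepsilon_{j}$ with $i<j$, each of multiplicity one, together with $0$; the highest is $\varepsilon_{1}+\varepsilon_{2}=\omega_{2}$ and the next one is $\varepsilon_{1}+\varepsilon_{3}$, so the highest weight of $\Lambda^{2}V(\omega_{2})$ is $(\varepsilon_{1}+\varepsilon_{2})+(\varepsilon_{1}+\varepsilon_{3})=2\varepsilon_{1}+\varepsilon_{2}+\varepsilon_{3}=\omega_{1}+\omega_{3}$ (this requires $n\ge 3$, so that $\omega_{3}$ is defined). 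A short check, using that any weight of $V(\omega_{2})$ with positive $\varepsilon_{1}$-coefficient has the form $\varepsilon_{1}\pm\varepsilon_{k}$, shows that $\{\varepsilon_{1}+\varepsilon_{2},\varepsilon_{1}+\varepsilon_{3}\}$ is the only unordered pair of distinct weights of $V(\omega_{2})$ summing to $2\varepsilon_{1}+\varepsilon_{2}+\varepsilon_{3}$; hence this weight has multiplicity one in $\Lambda^{2}V(\omega_{2})$, and $V(\omega_{1}+\omega_{3})$ occurs there exactly once as a direct summand.

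Second, I would realize $V(2\omega_{1})=\mathfrak{sp}(2n,\mathbb{C})$ as a summand. Consider the $\omega$-contraction $(\mathbb{C}^{2n})^{\otimes 4}\to(\mathbb{C}^{2n})^{\otimes 2}$, $v_{1}\otimes v_{2}\otimes v_{3}\otimes v_{4}\mapsto\omega(v_{2},v_{3})\,v_{1}\otimes v_{4}$, followed by the projection onto $S^{2}\mathbb{C}^{2n}$. Restricting it along $\Lambda^{2}\mathbb{C}^{2n}\otimes\Lambda^{2}\mathbb{C}^{2n}\hookrightarrow(\mathbb{C}^{2n})^{\otimes 4}$ produces an $\mathfrak{sp}(2n,\mathbb{C})$-equivariant map $\Lambda^{2}\mathbb{C}^{2n}\otimes\Lambda^{2}\mathbb{C}^{2n}\to S^{2}\mathbb{C}^{2n}$ which a direct computation shows is antisymmetric under exchanging the two $\Lambda^{2}$-factors, so it descends to a map $\Lambda^{2}(\Lambda^{2}\mathbb{C}^{2n})\to S^{2}\mathbb{C}^{2n}\cong\mathfrak{sp}(2n,\mathbb{C})$ (the isomorphism being Remark \ref{examples}(3)). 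Its restriction to the submodule $\Lambda^{2}V(\omega_{2})$ is again $\mathfrak{sp}(2n,\mathbb{C})$-equivariant and is nonzero: evaluating at $(e_{1}\wedge e_{2})\wedge(e^{1}\wedge e^{2})$, which lies in $\Lambda^{2}V(\omega_{2})$, gives $-(e_{1}\odot e^{1}+e_{2}\odot e^{2})\ne 0$. Since $\mathfrak{sp}(2n,\mathbb{C})$ is irreducible the map is onto, so $V(2\omega_{1})$ is a quotient, hence a direct summand, of $\Lambda^{2}V(\omega_{2})$; as $V(2\omega_{1})\not\cong V(\omega_{1}+\omega_{3})$ this yields an inclusion $V(2\omega_{1})\oplus V(\omega_{1}+\omega_{3})\subseteq\Lambda^{2}V(\omega_{2})$.

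Finally I would close by a dimension count. From $\dim V(\omega_{2})=\binom{2n}{2}-1=(2n+1)(n-1)$ one gets $\dim\Lambda^{2}V(\omega_{2})=\binom{(2n+1)(n-1)}{2}$; Weyl's dimension formula (\ref{WDF}) gives $\dim V(2\omega_{1})=n(2n+1)$ and $\dim V(\omega_{1}+\omega_{3})=\tfrac{1}{2}(2n+1)(n+1)(2n-1)(n-2)$, and an elementary manipulation shows that the sum of the last two equals $\binom{(2n+1)(n-1)}{2}$. Hence $V(2\omega_{1})\oplus V(\omega_{1}+\omega_{3})$ already fills out $\Lambda^{2}V(\omega_{2})$, so the inclusion above is an equality, which proves the Lemma. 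The step I expect to be most delicate is this last dimension bookkeeping — feeding $\lambda=\omega_{1}+\omega_{3}$ into (\ref{WDF}) is routine but involves several factors and some algebra, and the hypothesis $n\ge 3$ has to be kept in mind throughout. Alternatively one can dispatch the second and third steps simultaneously: as a $GL(2n,\mathbb{C})$-module $\Lambda^{2}(\Lambda^{2}\mathbb{C}^{2n})$ is the Schur module $\mathbb{S}_{(2,1,1)}\mathbb{C}^{2n}$, and Littlewood's branching rule for the restriction from $GL(2n,\mathbb{C})$ to $Sp(2n,\mathbb{C})$ (valid since the partition $(2,1,1)$ has length $3\le n$), together with $\Lambda^{2}\mathbb{C}^{2n}=V(\omega_{2})\oplus\mathbb{C}$ from Remark \ref{examples}(2), yields the stated decomposition at once.
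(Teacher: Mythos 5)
Your proposal is correct, and its skeleton is the same as the paper's: produce both constituents $V(2\omega_1)$ and $V(\omega_1+\omega_3)$ inside $\Lambda^2 V(\omega_2)$ and then finish with exactly the same Weyl-dimension bookkeeping, which does close (the identity $n(2n+1)+\tfrac{1}{2}(n+1)(2n+1)(2n-1)(n-2)=\binom{(2n+1)(n-1)}{2}$ checks out). Where you genuinely diverge is in how the two constituents are produced. The paper exhibits explicit highest weight vectors, $\sum_{j\geq 2}(e_1\wedge e_j)\wedge(e_1\wedge e^j)$ of weight $2\omega_1$ and $(e_1\wedge e_2)\wedge(e_1\wedge e_3)$ of weight $\omega_1+\omega_3$, and invokes the standard fact that such vectors generate the corresponding irreducible submodules. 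You instead get $V(\omega_1+\omega_3)$ from the extremal-weight argument ($2\varepsilon_1+\varepsilon_2+\varepsilon_3$ is the maximal weight of $\Lambda^2V(\omega_2)$ and has multiplicity one --- which even gives the slightly stronger statement that this constituent occurs exactly once), and you get $V(2\omega_1)$ as a quotient via the $\omega$-contraction $\Lambda^2V(\omega_2)\rightarrow S^2\mathbb{C}^{2n}\cong\mathfrak{sp}(2n,\mathbb{C})$, checked nonzero on $(e_1\wedge e_2)\wedge(e^1\wedge e^2)$, with complete reducibility converting the quotient into a summand; your evaluation and the antisymmetry check are correct. The trade-off: your route never requires verifying annihilation by the raising operators (only one evaluation plus irreducibility of the target), while the paper's is shorter to state but delegates the highest-weight-vector verification to the cited reference. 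Your closing alternative via Littlewood's $GL_{2n}\downarrow Sp_{2n}$ branching for $\mathbb{S}_{(2,1,1)}\mathbb{C}^{2n}$ is a genuinely different, dimension-count-free argument (valid since the partition has length $3\leq n$), consistent with the statement after removing the extra $V(\omega_2)$ coming from $\Lambda^2\mathbb{C}^{2n}=V(\omega_2)\oplus\mathbb{C}$; and you are right to flag, as the paper does not, that $n\geq 3$ is needed for $\omega_3$ to be defined.
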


\begin{proof}
Observe that the elements
	\begin{displaymath}
		\sum_{j = 2}^n (e_1 \wedge e_j) \wedge (e_1 \wedge e^j),\ (e_1 \wedge e_2) \wedge (e_1 \wedge e_3) \in \Lambda^2 V(\omega_2) 
	\end{displaymath}
are highest weight vectors with highest weights $2 \omega_1$ and $\omega_1 + \omega_3$ respectively, therefore the corresponding highest weight modules appear as submodules of $\Lambda^2 V(\omega_2)$ (see \cite{Kn}[Sec. 5.3] for the characterization of highest weight vectors and a discussion of this property), i.e. we have
	\begin{displaymath}
		V(2 \omega_1) \oplus V(\omega_1 + \omega_3) \subseteq \Lambda^2 V(\omega_2).
	\end{displaymath}
Using $(\ref{WDF})$ we compute the dimension
	\[  \textrm{dim}(V(\omega_1 + \omega_3) = \frac{(n+1)(2n+1)(2n-1)(n-2)}{2}, \]
and from Remark \ref{examples}, we get 
	\[ \textrm{dim}(V(2 \omega_1)) = n(2n+1), \qquad \textrm{dim}(\omega_2) = n(2n - 1) - 1, \]
so a straighforward computation shows
	\[	\textrm{dim}(V(2 \omega_1)) + \textrm{dim}(V(\omega_1 + \omega_3)) = \textrm{dim}(\Lambda^2 V(\omega_2)),	\]
and the result follows. 
\end{proof}

\subsection{Quaterionic Symplectic Algebras}

For every $k,l \in \mathbb{N}$ and $n = k+l$, we consider $\mathbb{C}^{2k,2l}$ to be $\mathbb{C}^{2n}$ with an hermitian form of signature $(2k,2l)$ and add a compatible symplectic structure in the folllowing way:  take $\{e_s, e^s \}$ a basis of $\mathbb{C}^{2n}$ so that the hermitian form and symplectic form are given respectively by
			\[	\langle x, y \rangle =  \sum_{s=1}^n \varepsilon_s (\overline{x_s} y_s + \overline{x^s} y^s ), \qquad \omega( x, y )=  \sum_{s=1}^n \varepsilon_s (x_s y^s - x^s y_s )\]
where $x = \sum_s x_s e_s + x^s e^s$, $y = \sum_s y_s e_s + y^s e^s$ and
	\[ \varepsilon_s = \langle e_s , e_s \rangle = \langle e^s , e^s \rangle = \left\{\begin{array}{rcl}
	1, & & 1 \leq s \leq k, \\
	-1, & & k+1 \leq s \leq n.
 \end{array}\right.
	\]
We define the quaternionic symplectic group of signature $(k,l)$ to be the linear group preserving both the hermitian and symplectic structures in $\mathbb{C}^{2k,2l}$, i.e.
	\[ Sp(k,l) = \left\{A \in M_{2n \times 2n}(\mathbb{C}) \ : \ \begin{array}{c} \langle Ax , Ay \rangle = \langle x , y \rangle, \quad \omega( Ax , Ay ) = \omega( x , y ), \\ 	\forall \ x, y \in  \mathbb{C}^{2k,2l}	\end{array} \right\}, \]
if either $k$ or $l$ is zero, then we denote the group simply by $Sp(n)$.

\begin{remark}
There is a quaternionic structure in $\mathbb{C}^{2k,2l}$ given by a conjugate-linear map $j : \mathbb{C}^{2k,2l} \rightarrow \mathbb{C}^{2k,2l}$ that satisfies $j(e_s) = e^s$, $j(e^s) = - e_s$. We have thus the identification $\mathbb{C}^{2k,2l} \cong \mathbb{H}^{k,l} = \mathbb{C}^{k,l} \oplus j \mathbb{C}^{k,l}$ and the quaternionic hermitian structure 
	\[ \sum_{s = 1}^n \varepsilon_s (x_s + j x^s)^* (y_s + j y^s) = \langle x, y \rangle + j \omega(x,y), \]
where $z \mapsto z^*$ denotes conjugation in the quaternions, that induces the isomorphism of Lie groups
	\[ Sp(k,l) \cong \{ A \in M_{n \times n} (\mathbb{H}) : A^* I_{k,l} A = I_{k,l} \}, \quad
	I_{k,l} = \left(\begin{array}{cc} I_k & 0 \\ 0 & - I_l \end{array} \right).
 \]
\end{remark}

\begin{prop}\label{P1.7}
The group $Sp(k,l)$ is connected and simply connected with center \[ Z(Sp(k,l)) = \{\pm I \} \cong \mathbb{Z}_2, \] moreover $Sp(k)$ is compact and $Sp(k,l)$ is non-compact if $k,l \geq 1$.
\end{prop}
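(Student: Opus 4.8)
The plan is to establish the four assertions—connectedness, simple connectedness, the computation of the center, and the (non-)compactness dichotomy—by reducing everything to standard facts about $Sp(k,l)$ viewed as the quaternionic unitary group $\{A \in M_{n\times n}(\mathbb{H}) : A^* I_{k,l} A = I_{k,l}\}$, using the isomorphism recorded in the Remark just above. First I would treat the compact case $Sp(n) = Sp(n,0)$: here $A^*A = I$, so the columns of $A$ form a quaternionic orthonormal basis of $\mathbb{H}^n$, which exhibits $Sp(n)$ as a closed bounded subset of $\mathbb{H}^{n^2} \cong \mathbb{R}^{4n^2}$, hence compact. Connectedness of $Sp(n)$ follows from the transitive action on the unit sphere $S^{4n-1} \subset \mathbb{H}^n$ with stabilizer $Sp(n-1)$ and induction (the base case $Sp(1) \cong \{q \in \mathbb{H} : |q| = 1\} \cong S^3$ being connected); simple connectedness follows the same way from the long exact homotopy sequence of the fibration $Sp(n-1) \hookrightarrow Sp(n) \to S^{4n-1}$, since $\pi_1(S^{4n-1}) = \pi_0(S^{4n-1}) = 1$ for $n \geq 1$ and $Sp(1) = S^3$ is $2$-connected.

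For the indefinite case $k,l \geq 1$, I would use a Cartan-type decomposition: writing $A = \begin{pmatrix} \alpha & \beta \\ \gamma & \delta \end{pmatrix}$ in $(k,l)$-blocks over $\mathbb{H}$, the defining relation $A^* I_{k,l} A = I_{k,l}$ forces $\alpha^*\alpha = I_k + \gamma^*\gamma$, which is positive definite, so $\alpha \in GL_k(\mathbb{H})$; polar decomposition then produces a deformation retraction of $Sp(k,l)$ onto its maximal compact subgroup $Sp(k) \times Sp(l)$ (the block-diagonal unitary elements). This retraction immediately gives non-compactness (since $\gamma$ can be taken arbitrarily large while staying in the group), connectedness (as $Sp(k) \times Sp(l)$ is connected by the compact case), and—via the homotopy equivalence—$\pi_1(Sp(k,l)) \cong \pi_1(Sp(k)) \times \pi_1(Sp(l)) = 1$, i.e. simple connectedness. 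For the center: an element $A$ central in $Sp(k,l)$ commutes in particular with every element of the maximal compact $Sp(k) \times Sp(l)$, and a short Schur-type argument over $\mathbb{H}$ (using that $Sp(m)$ acts irreducibly on $\mathbb{H}^m$ and that the only quaternionic scalars commuting with all of $Sp(m)$ are real scalars, because a general unit quaternion does not commute with $\mathbf i, \mathbf j, \mathbf k$) forces $A$ to be a real scalar $\lambda I$; the unitarity condition $\lambda^2 I_{k,l} = I_{k,l}$ then gives $\lambda = \pm 1$, so $Z(Sp(k,l)) = \{\pm I\} \cong \mathbb{Z}_2$.

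The main obstacle is organizing the polar/Cartan decomposition argument cleanly over the quaternions—verifying that positive-definite quaternionic Hermitian matrices admit a well-defined square root and that the resulting polar factorization is a homeomorphism onto $Sp(k) \times Sp(l) \times (\text{Euclidean factor})$—since the non-commutativity of $\mathbb{H}$ means one cannot simply cite the complex statement verbatim. An economical alternative, which I would likely use to keep the exposition short, is to invoke the realification $\mathbb{H}^{k,l} \cong \mathbb{C}^{2k,2l}$ from Section \ref{Sec1.1}: this realizes $Sp(k,l)$ as a closed real form of $Sp(2n,\mathbb{C})$, namely $Sp(k,l) = Sp(2n,\mathbb{C}) \cap U(2k,2l)$, whose maximal compact subgroup and fundamental group are tabulated in the standard references (\cite{Kn}, \cite{He}), and whose center is computed inside $Sp(2n,\mathbb{C})$ where $\{\pm I\}$ is visibly the full center. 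Everything else—connectedness, the $\mathbb{Z}_2$ center, and the compact/non-compact split—then reads off from the general structure theory of real forms of classical complex simple Lie groups, so I would cite \cite{He} or \cite{Kn} for these facts and record the sphere-fibration argument only in the compact case as a self-contained illustration.
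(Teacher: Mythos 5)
Your handling of connectedness, simple connectedness, and (non-)compactness is essentially the paper's own route: the paper likewise gets compactness of $Sp(n)$ by bounding the columns, obtains $\pi_0=\pi_1=1$ from the quaternionic sphere fibrations with base case $Sp(1)=\mathbb{S}(1)$, and passes to indefinite signature via the global Cartan decomposition $Sp(k,l)\cong (Sp(k)\times Sp(l))\times\mathfrak{p}$ (cited from Knapp) rather than redoing the quaternionic polar decomposition by hand; non-compactness is certified there by an explicit $\cosh/\sinh$ one-parameter subgroup, which is the same observation you make about $\gamma$ being unbounded.

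The genuine gap is in your center computation. Commuting with the maximal compact $Sp(k)\times Sp(l)$ does \emph{not} force a central element to be a real scalar: since $\mathbb{H}^{k,l}=\mathbb{H}^k\oplus\mathbb{H}^l$ splits into two irreducible $Sp(k)\times Sp(l)$-modules, the quaternionic Schur argument only applies blockwise and yields $A=\mathrm{diag}(\lambda I_k,\mu I_l)$ with $\lambda,\mu\in\mathbb{R}$, hence $\lambda,\mu\in\{\pm1\}$ after unitarity --- a priori $\mathbb{Z}_2\times\mathbb{Z}_2$, not $\mathbb{Z}_2$. Concretely, $I_{k,l}$ itself lies in $Sp(k,l)$ and commutes with every block-diagonal element, yet it is not central: it fails to commute with any element mixing the two blocks, for instance the hyperbolic one-parameter subgroup you already used for non-compactness. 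So the step ``forces $A$ to be a real scalar $\lambda I$'' fails as stated, and you need exactly the extra argument the paper supplies: after bounding $Z(Sp(k,l))\subset Z(Sp(k))\times Z(Sp(l))\cong\mathbb{Z}_2\times\mathbb{Z}_2$ (maximal compacts contain the center), it eliminates $\pm I_{k,l}$ by exhibiting an explicit off-block element with entries $\alpha_0,\beta_0$, $|\alpha_0|^2-|\beta_0|^2=1$, $\beta_0\neq0$, that does not commute with them. Your fallback of invoking the structure theory of real forms (or noting that $Sp(k,l)$ is Zariski dense in $Sp(2n,\mathbb{C})$, so its centralizer in $GL_{2n}(\mathbb{C})$ consists of scalars, which the symplectic condition cuts down to $\pm I$) would close the gap, but the Schur-via-maximal-compact shortcut alone does not.
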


\begin{proof}
If we consider the identification $M_{n \times n}(\mathbb{H}) \cong \mathbb{H}^n \times \cdots \times \mathbb{H}^n = \mathbb{H}^{n^2}$, where each $\mathbb{H}^n$-factor is thought as the corresponding column, then we have an embedding of topological spaces
	\[ Sp(n) \subset \mathbb{S}(n) \times ... \times \mathbb{S}(n), \]
with closed image, where $\mathbb{S}(n) \subset \mathbb{H}^{n}$ is the unit sphere. This tells us that $Sp(n)$ is compact for every $n \in \mathbb{N}$, moreover, there is a fibration
	\begin{displaymath}
		Sp(n) \hookrightarrow Sp(n+1) \rightarrow \mathbb{S}(n+1) \cong Sp(n+1) / Sp(n),
	\end{displaymath}
given by the natural left multiplication so that by cellular aproximation (see \cite{Ha}) it is possible to see that $\pi_s (\mathbb{S}(n+1)) = 0$, for every $s < 4n + 3$, and by applying sucessively the long homotopy sequence we get that
	\begin{displaymath}
		\pi_s(Sp(n)) \cong \pi_s(Sp(1)) = 0, \qquad n \geq 1, \ s \in \{ 0,1 \},
	\end{displaymath}
where the last identity follows from the fact that $Sp(1) = \mathbb{S}(1)$. In general, observe that the automorphism given by $\sigma(X) = I_{k,l} X I_{k,l}$ is a Cartan involution and so, the group $K = Sp(k,l)_\sigma = Sp(k) \times Sp(l)$ is a maximally compact subgroup. We have thus Cartan decompositions of Lie algebra and Lie group \cite[Pg. 361]{Kn}
	\begin{displaymath}
		\mathfrak{sp}(k,l) = \mathfrak{k} \oplus \mathfrak{p}, \qquad Sp(k,l) \cong K \times \mathfrak{p},
	\end{displaymath}
where the second congruence is in the diffeomorphism type, again we have $\pi_s(Sp(k,l)) = \pi_s(Sp(k)) \times \pi_s(Sp(l)) = 0$, for $s \in \{ 0,1\}$, and thus $Sp(k,l)$ is connected and simply connected. In the case where $k,l \geq 1$, there is an unbounded one-parameter subgroup 
 	\[ \beta(t) = \left(\begin{array}{ccc} I_{k-1} & 0 & 0 \\
		0 & A(t) & 0 \\	0 & 0 & I_{l-1} \end{array}\right) \in Sp(k,l) \]
given by 
	\[ A(t) = \left(\begin{array}{cc} \cosh(t) & \sinh(t) \\ \sinh(t) & \cosh(t) \end{array}\right), \]
so in this case, $Sp(k,l)$ is non-compact. To compute the center of the groups, observe that  $Sp(n)$ has diagonal matrices of arbitrary diagonal elements $\lambda_j \in \mathbb{H}$ such that $|\lambda|^2 = 1$, so that if $h \in Z(Sp(n))$, then $h$ must be diagonal with diagonal elements given by unitary quaternions that belong to $Z(\mathbb{H}^*) = \mathbb{R}^*$, thus $h$ is diagonal with diagonal elements $\pm 1$. If $n \geq 2$, take $\alpha, \beta \in \mathbb{C}$ two distinct complex numbers that satisfy $|\alpha|^2 + |\beta|^2 = 1$, then 
	\[ \left(\begin{array}{ccc} \alpha & \beta & 0 \\
		- \overline{\beta} & \overline{\alpha} & 0 \\
		0 & 0 & I_{n-2}
	\end{array}\right) \in Sp(n), \]
must commute with $h$ that forces the $(1,1)$-entry of $h$ to be distinct to the $(2,2)$-entry, analogously, we can compare the remaining diagonal entries so that we see that the only possibility is $h = \pm I$ and thus $Z(Sp(n)) \cong \mathbb{Z}_2$. In general signature, we take again the previous Cartan decomposition so that the maximally compact subgroups always contain the center \cite[Thm. 6.31]{Kn} and thus $Z(Sp(k,l)) \subset Z(Sp(k)) \times Z(Sp(l)) \cong \mathbb{Z}_2 \times \mathbb{Z}_2$. Finally, if $\alpha_0, \beta_0 \in \mathbb{C}$ are two complex numbers such that $|\alpha_0|^2 - |\beta_0|^2 = 1$ and $\beta_0 \neq 0$, then 
	\[ \left(\begin{array}{cccc} \alpha_0 & 0 & \beta_0 & 0 \\	0 & I_{k-1} & 0 & 0 \\	- \overline{\beta}_0 & 0 & \overline{\alpha}_0 & 0 \\ 0 & 0 & 0 & I_{l-1}
 \end{array}\right)  \in Sp(k,l) \]
does not commute with
	\begin{displaymath}
		h = \left(\begin{array}{cc}
		\pm I & 0 \\ 0 & \pm I
	\end{array}\right) \in Sp(k,l)
	\end{displaymath}
if $h$ is not a multiple of the identity, so again $Z(Sp(k,l)) \cong \mathbb{Z}_2$. 
\end{proof}
 
Recall that if $\g_0$ is a real Lie algebra, then we say that $\g_0$ is a real form of a complex Lie algebra, or that $\g$ is the complexification of $\g_0$ if $\g_0 \otimes \mathbb{C} \cong \g$. Moreover, the highest weight classification of complex representations of $\g$, gives a classification of real representations of $\g_0$ in the following way: For every real irreducible representation $W$ of $\g_0$, there exist $V(\lambda)$ a complex irreducible representation of $\g$ with highest weight $\lambda$ such that either
		\begin{itemize}
	\item $W \otimes \mathbb{C} \cong V(\lambda)$ and we say that $W$ is a real form of $V(\lambda)$, or 

	\item $W = V(\lambda)_\mathbb{R}$, i.e. we take $V(\lambda)$ as a real vector space by forgetting the multiplication by complex scalars, we say that $W$ is the realification of $V(\lambda)$;
		\end{itemize}
in either case, we say that $W$ is a real representation of highest weight $\lambda$. If we define homomorphisms between real representations as in the complex case, we can observe that the image and kernel of every homomorphism will be an invariant subspace, so we have the following result known as Schur's Lemma

\begin{lema}\label{Schur}
If $T : V \rightarrow W$ is a homomorphism between two irreducible representations, then either $T = 0$ or $T$ is an isomorphism.
\end{lema}

\begin{prop}\label{real-forms}
There exist a decomposition 
	\[ \mathfrak{su}(2k,2l) = \mathfrak{sp}(k,l) \oplus W_0, \]
that is stable under the adjoint action of $\mathfrak{sp}(k,l)$ such that $W_0 \otimes \mathbb{C} \cong V(\omega_2)$ as a complex representation of $\mathfrak{sp}(k,l) \otimes \mathbb{C} \cong \mathfrak{sp}(2n,\mathbb{C})$.
\end{prop}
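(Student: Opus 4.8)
The plan is to exhibit the decomposition explicitly by identifying $\mathfrak{su}(2k,2l)$ with a concrete real form of $\mathfrak{sl}(2n,\mathbb{C})$ inside of which $\mathfrak{sp}(k,l)$ sits as the fixed-point set of an involution, and then decompose the complementary summand $W_0$ as an $\mathfrak{sp}(k,l)$-module by complexifying. First I would recall that $Sp(k,l)=SU(2k,2l)\cap Sp(2n,\mathbb{C})$ inside $GL_{2n}(\mathbb{C})$, where the symplectic form $\omega$ on $\mathbb{C}^{2n}$ is the one appearing in the construction of $\mathbb{C}^{2k,2l}$; equivalently, on the Lie algebra level $\mathfrak{sp}(k,l)=\{X\in\mathfrak{su}(2k,2l): XJ'+J'X^t=0\}$ for the appropriate matrix $J'$ of $\omega$. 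The key structural point is that conjugation by $J'$ (or rather the map $X\mapsto -J'X^tJ'^{-1}$) defines an involutive automorphism $\tau$ of $\mathfrak{su}(2k,2l)$ whose $+1$-eigenspace is exactly $\mathfrak{sp}(k,l)$; since $\tau$ commutes with the adjoint action of $\mathfrak{sp}(k,l)$, its $-1$-eigenspace $W_0$ is an $\mathfrak{sp}(k,l)$-stable complement, giving the desired decomposition $\mathfrak{su}(2k,2l)=\mathfrak{sp}(k,l)\oplus W_0$.

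Next I would complexify. We have $\mathfrak{su}(2k,2l)\otimes\mathbb{C}\cong\mathfrak{sl}(2n,\mathbb{C})$, and the involution $\tau$ extends $\mathbb{C}$-linearly; its fixed subalgebra is $\mathfrak{sp}(2n,\mathbb{C})$, so $W_0\otimes\mathbb{C}$ is the $-1$-eigenspace of $\tau$ acting on $\mathfrak{sl}(2n,\mathbb{C})$, that is, the orthogonal complement of $\mathfrak{sp}(2n,\mathbb{C})$ inside $\mathfrak{sl}(2n,\mathbb{C})$ with respect to the trace form, viewed as an $\mathfrak{sp}(2n,\mathbb{C})$-module via the adjoint action. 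Now I would compute this complement directly: $\mathfrak{gl}(2n,\mathbb{C})=\mathbb{C}^{2n}\otimes(\mathbb{C}^{2n})^*$, and using the symplectic form to identify $(\mathbb{C}^{2n})^*\cong\mathbb{C}^{2n}$ as $\mathfrak{sp}(2n,\mathbb{C})$-modules we get $\mathfrak{gl}(2n,\mathbb{C})\cong\mathbb{C}^{2n}\otimes\mathbb{C}^{2n}\cong S^2\mathbb{C}^{2n}\oplus\Lambda^2\mathbb{C}^{2n}$. By Remark~\ref{examples}, $S^2\mathbb{C}^{2n}\cong V(2\omega_1)\cong\mathfrak{sp}(2n,\mathbb{C})$, which accounts for the fixed subalgebra (together with the trivial summand $\mathbb{C}\theta$ that is killed in passing from $\mathfrak{gl}$ to $\mathfrak{sl}$), while $\Lambda^2\mathbb{C}^{2n}\cong V(\omega_2)\oplus\mathbb{C}\theta$ is exactly the complement, with the trivial line $\mathbb{C}\theta$ accounting for the scalar direction removed when passing to $\mathfrak{sl}$. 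Hence $W_0\otimes\mathbb{C}\cong V(\omega_2)$, as claimed.

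The main obstacle I anticipate is bookkeeping: getting the identification $Sp(k,l)=SU(2k,2l)\cap Sp(2n,\mathbb{C})$ and the compatibility of the two forms $\langle\cdot,\cdot\rangle$ and $\omega$ exactly right, so that the involution $\tau$ really does preserve $\mathfrak{su}(2k,2l)$ and has $\mathfrak{sp}(k,l)$ as its fixed set — this requires checking that $J'$ (the matrix of $\omega$ in the given basis) is, up to sign and the Hermitian form, compatible with the $\ast$-structure defining $\mathfrak{su}(2k,2l)$, which is precisely the compatibility built into the construction $\mathbb{C}^{2k,2l}\cong\mathbb{H}^{k,l}$ in the Remark preceding the statement. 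Once $\tau$ is in place, everything else is either the eigenspace decomposition of an involution or a standard $\mathfrak{sp}(2n,\mathbb{C})$-module computation already recorded in Remark~\ref{examples}. One should also note that the real structure on $W_0$ (real form versus realification) is irrelevant for the statement, which only asks about $W_0\otimes\mathbb{C}$, so no subtlety there needs to be resolved.
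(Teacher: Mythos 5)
Your proposal is correct and follows essentially the same route as the paper: the same involution $\sigma(X)=J_{k,l}X^tJ_{k,l}$ of $\mathfrak{su}(2k,2l)$ and $\mathfrak{sl}(2n,\mathbb{C})$, the same $(\pm1)$-eigenspace decomposition, and the same complexification step. The only difference is cosmetic: where the paper identifies the $(-1)$-eigenspace with $V(\omega_2)$ via the explicit equivariant map $S:\Lambda^2\mathbb{C}^{2k,2l}\to\mathfrak{gl}(2n,\mathbb{C})$ and Schur's Lemma, you use the symplectic identification $\mathfrak{gl}(2n,\mathbb{C})\cong S^2\mathbb{C}^{2n}\oplus\Lambda^2\mathbb{C}^{2n}$ and count isotypic components, which amounts to the same computation.
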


\begin{proof}
Observe that if $x, y \in \mathbb{C}^{2k,2l}$ are considered as column vectors, then 
	\[	\omega(x,y) = x^t J_{k,l} y, \qquad  J_{k,l} = \left(\begin{array}{cc} 0 & I_{k,l} \\ - I_{k,l} & 0 \end{array}\right). \]
Now the map $\sigma : \mathfrak{gl}(2n,\mathbb{C}) \rightarrow \mathfrak{gl}(2n,\mathbb{C})$ given by $\sigma (X) = J_{k,l} X^t J_{k,l}$ is involutive ($\sigma^2 = id$) and induces automorphisms in $\mathfrak{su}(2k,2l)$ and $\mathfrak{sl}(2n,\mathbb{C})$. These automorphisms induce corresponding decompositions into $(\pm 1)$-eigenvector subspaces
	\[ \mathfrak{su}(2k,2l) = \mathfrak{sp}(k,l) \oplus W_0, \qquad \mathfrak{sl}(2n,\mathbb{C}) = \g \oplus W, \]
where $\g$ is conjugated to $\mathfrak{sp}(2n,\mathbb{C})$ in $\mathfrak{sl}(2n,\mathbb{C})$. Moreover the decomposition in $\mathfrak{su}(2k,2l)$ complexifies into the one in $\mathfrak{sl}(2n,\mathbb{C})$, so to conclude the proof, we need to show that $W \cong V(\omega_2)$ as a representation of $\mathfrak{sp}(2n,\mathbb{C})$. To see this, observe that the map 
	\begin{equation}S : \Lambda^2 (\mathbb{C}^{2k,2l}) \rightarrow \mathfrak{gl}(2n,\mathbb{C}), \qquad 
		 S_{x \wedge y} (z) = (x^t J_{k,l} z) y - (y^t J_{k,l} z) x, \end{equation}
is a $\g$-homomorphism so that $\textrm{tr}(S_{x \wedge y}) = 2 \omega(x,y)$ and $\sigma(S_\eta) = - S_\eta$. Thus, we have
	\[	V(\omega_2) \cong \{ x \wedge y \in \Lambda^2 (\mathbb{C}^{2k,2l}) : \omega(x,y) = 0 \} \cong W, 	\]
where the last isomorphism is due to Lemma \ref{Schur} applied to $S$.
\end{proof}

There is an identification $\mathbb{C}^{2k,2l}_\mathbb{R} \cong \mathbb{R}^{4k,4l}$ of real representations of $\mathfrak{sp}(k,l)$, with the invariant inner product given by $(X,Y) = \textrm{Re} \langle X, Y \rangle$. Thus we have successive inclusions of Lie groups
	\[ Sp(k,l) \subset SU(2k,2l) \subset SO(4k,4l), \]
and the following Lemma gives us the complete algebra of orthogonal operators in $\mathbb{R}^{4k,4l}$ in terms of the symplectic ones.

\begin{lema}\label{L1.6}
There is a $\mathfrak{sp}(k,l)$-module decomposition
	\begin{displaymath}
		\mathfrak{so}(4k,4l) = \mathfrak{sp}(k,l) \oplus \mathfrak{sp}(1) \oplus V_0 \oplus V_1 \oplus V_2
	\end{displaymath}
where $\mathfrak{sp}(1)$ is a subalgebra of $\mathfrak{so}(4k,4l)$ that commutes with $\mathfrak{sp}(k,l)$. For every $s \in \{0,1,2\}$, $V_s \otimes \mathbb{C} \cong V(\omega_2)$ as $\mathfrak{sp}(k,l)$-modules and 
	\begin{displaymath}
		[V_s , V_s] = [\mathfrak{sp}(k,l) , \mathfrak{sp}(k,l)] = \mathfrak{sp}(k,l),
	\end{displaymath}
where $[\cdot , \cdot]$ is the Lie bracket in $\mathfrak{so}(4k,4l)$.
\end{lema}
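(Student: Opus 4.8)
The plan is to decompose $\mathfrak{so}(4k,4l)$ as an $\mathfrak{sp}(k,l)$-module by exploiting the identification $\mathbb{C}^{2k,2l}_\mathbb{R} \cong \mathbb{R}^{4k,4l}$ together with the quaternionic structure $j$ and the standard fact that $\mathfrak{so}(m) \cong \Lambda^2(\mathbb{R}^m)$ as a module over any subgroup acting orthogonally. First I would fix notation: over $\mathbb{C}$ one has $\mathfrak{so}(4k+4l,\mathbb{C}) \cong \Lambda^2(\mathbb{C}^{4n})$, and $\mathbb{C}^{4n} \cong \mathbb{C}^{2n} \oplus \overline{\mathbb{C}^{2n}}$ as a complexification of the underlying real space $\mathbb{C}^{2k,2l}_\mathbb{R}$, where $\mathbb{C}^{2n} \cong V(\omega_1)$ for $\mathfrak{sp}(2n,\mathbb{C})$ and the conjugate is isomorphic to $V(\omega_1)$ again since $V(\omega_1)$ is self-dual via $\omega$. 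So $\Lambda^2(\mathbb{C}^{4n}) \cong \Lambda^2 V(\omega_1) \oplus \Lambda^2 V(\omega_1) \oplus \big(V(\omega_1)\otimes V(\omega_1)\big)$. Using Remark \ref{examples}(2), $\Lambda^2 V(\omega_1) \cong V(\omega_2) \oplus \mathbb{C}$, and the tensor square decomposes as $V(\omega_1)\otimes V(\omega_1) \cong S^2 V(\omega_1) \oplus \Lambda^2 V(\omega_1) \cong V(2\omega_1) \oplus V(\omega_2) \oplus \mathbb{C}$. Collecting terms, the complexified module is $2\,V(2\omega_1) \oplus \big(\text{some copies of } V(\omega_2)\big) \oplus V(2\omega_1) \oplus \text{trivials}$ — more carefully I need to track that $\mathfrak{sp}(k,l)$ sits inside via its complexification $\mathfrak{sp}(2n,\mathbb{C}) \cong V(2\omega_1)$ (one copy), that the commuting $\mathfrak{sp}(1)$ contributes the three-dimensional adjoint of $\mathfrak{sp}(1)$ (trivial as an $\mathfrak{sp}(k,l)$-module, giving the stated $\mathfrak{sp}(1)$ summand), and that the remaining multiplicity of $V(\omega_2)$ is exactly three, yielding $V_0, V_1, V_2$. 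The dimension bookkeeping $\dim\mathfrak{so}(4n) = 2n(4n-1)$ versus $\dim\mathfrak{sp}(k,l) = n(2n+1)$, $\dim\mathfrak{sp}(1) = 3$, and $3\cdot\dim V(\omega_2) = 3(2n^2-n-1)$ should close up, which both confirms the count and forces the multiplicity-three claim.

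Next I would exhibit the $\mathfrak{sp}(1)$ explicitly: the quaternionic structure gives right-multiplication operators by $i,j,k$ on $\mathbb{H}^{k,l}$, which are $\mathbb{R}$-linear, orthogonal for $(X,Y) = \mathrm{Re}\langle X,Y\rangle$, and commute with the left $\mathfrak{sp}(k,l)$-action; their span is a copy of $\mathfrak{sp}(1) \subset \mathfrak{so}(4k,4l)$ centralizing $\mathfrak{sp}(k,l)$. This pins down the second summand and its commutation property directly, without representation theory. For the three copies $V_s$, rather than constructing them individually I would argue by Schur (Lemma \ref{Schur}) and the multiplicity count: the $V(\omega_2)$-isotypic component of $\mathfrak{so}(4k,4l)$ over $\mathbb{R}$ is a sum of three irreducible real submodules each with complexification $V(\omega_2)$ (here I must check that the relevant real form of $V(\omega_2)$ is of real type, which follows from Proposition \ref{real-forms} exhibiting $V(\omega_2)$ as the complexification of the real module $W_0 \subset \mathfrak{su}(2k,2l)$, hence real and not quaternionic), and any choice of three complementary such submodules can serve as $V_0, V_1, V_2$.

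The last and most substantive point is the bracket relation $[V_s, V_s] = \mathfrak{sp}(k,l)$. I would prove this by a Schur-type argument combined with a single nonvanishing check. The bracket restricted to $\Lambda^2 V_s \to \mathfrak{so}(4k,4l)$ is an $\mathfrak{sp}(k,l)$-homomorphism; by Lemma \ref{L1.1}, $\Lambda^2 V_s \otimes \mathbb{C} \cong \Lambda^2 V(\omega_2) \cong V(2\omega_1) \oplus V(\omega_1+\omega_3)$, and since (by Corollary \ref{C1.2} and dimension count) $V(\omega_1+\omega_3)$ does not embed in $\mathfrak{so}(4k,4l)$ — it has dimension exceeding any summand other than a hypothetical large one, and one checks it is not among $\mathfrak{sp}(k,l), \mathfrak{sp}(1), V_s$ — the image of the bracket can only be a copy of $V(2\omega_1) \cong \mathfrak{sp}(k,l)$ or zero. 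To rule out zero it suffices to evaluate the bracket on one pair of explicit elements of $V_s$ and see it lands (nontrivially) in $\mathfrak{sp}(k,l)$; concretely, $V_s$ can be realized as operators of the form "$\mathfrak{sp}(k,l)$-generator composed with right-multiplication by a fixed imaginary quaternion," and $[AR_q, BR_q] = (AB - BA)R_q^2 = -[A,B]$ lands in $\mathfrak{sp}(k,l)$ and is nonzero for generic $A,B$. The identity $[\mathfrak{sp}(k,l),\mathfrak{sp}(k,l)] = \mathfrak{sp}(k,l)$ is just simplicity of $\mathfrak{sp}(k,l)$ (Proposition \ref{P1.7}). The main obstacle I anticipate is the careful bookkeeping in the first step — correctly identifying which of the several $V(\omega_2)$ and $V(2\omega_1)$ summands of the complexified $\Lambda^2(\mathbb{C}^{4n})$ are "used up" by $\mathfrak{sp}(k,l)_\mathbb{C}$ itself and by the trivial $\mathfrak{sp}(1)$-summand, and making sure the real forms involved are of the expected type (real vs. quaternionic) so that each complex $V(\omega_2)$ yields exactly one real $V_s$ rather than collapsing two complex copies into one real irreducible; Proposition \ref{real-forms} is the key input that settles the latter.
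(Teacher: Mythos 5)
Your decomposition step and your Schur argument for the bracket are sound and close in spirit to the paper: the multiplicity count $\mathfrak{so}(4k,4l)\otimes\mathbb{C}\cong V(2\omega_1)\oplus 3V(\omega_2)\oplus 3\mathbb{C}$ is correct (note that your intermediate ``collecting terms'' sentence lists three copies of $V(2\omega_1)$, which is a slip — exactly one occurs, as your own dimension check forces), the right-multiplications by $i,j,k$ do give the commuting $\mathfrak{sp}(1)$, Proposition \ref{real-forms} does settle that the $V(\omega_2)$-isotypic part splits into three real irreducibles of real type, and the use of Lemma \ref{L1.1} plus Schur to force $[V_s,V_s]\subset\mathfrak{sp}(k,l)$ is exactly what the paper does. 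The paper reaches the same decomposition more concretely, via explicit equivariant maps $T^0,T^1:\Lambda^2\mathbb{C}^{2k,2l}\to\mathfrak{so}(4k,4l)$ passing through $\mathfrak{u}(2k,2l)$ and $\mathfrak{su}(2k,2l)$, which also hands it the explicit elements needed later.

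The genuine gap is your nonvanishing check for $[V_s,V_s]\neq 0$. If $A\in\mathfrak{sp}(k,l)$ and $R_q$ is right multiplication by a unit imaginary quaternion, then $A$ and $R_q$ are two \emph{commuting skew-adjoint} operators for $\mathrm{Re}\langle\cdot,\cdot\rangle$, so $AR_q$ is \emph{symmetric} and does not lie in $\mathfrak{so}(4k,4l)$ at all; moreover the span $\{AR_q: A\in\mathfrak{sp}(k,l)\}$ is equivariantly a copy of the adjoint module, of dimension $n(2n+1)$, not of the real form of $V(\omega_2)$, of dimension $2n^2-n-1$, so it could not serve as $V_s$ in any case. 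Consequently the identity $[AR_q,BR_q]=-[A,B]$ proves nothing about $[V_s,V_s]$. The step can be repaired: take $S$ quaternionic-linear, \emph{symmetric} for $\mathrm{Re}\langle\cdot,\cdot\rangle$ and trace-free; then $SR_q$ is skew, the space $\{SR_q\}$ for fixed $q$ is an $\mathfrak{sp}(k,l)$-submodule of the correct dimension, and $[S_1R_q,S_2R_q]=-[S_1,S_2]$ lies in $\mathfrak{sp}(k,l)$ and is generically nonzero — or one evaluates the bracket on explicit elements, as the paper does with $a,b\in W_0$ and the computation $[S_a,S_b](e_1)=[T^1_a,T^1_b](e_1)=\beta e^1\neq 0$. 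Finally, since you allow ``any choice of three complementary submodules,'' you must either fix concrete $V_0,V_1,V_2$ for which the nonvanishing is verified, or add the observation that the commuting $\mathfrak{sp}(1)$ acts transitively on the irreducible submodules of the isotypic component while fixing $\mathfrak{sp}(k,l)$, so nonvanishing for one implies it for all.
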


\begin{proof}
For every $s = 0,1,2$, consider the maps
	\[ T^s : \Lambda^2 \mathbb{C}^{2k,2l} \rightarrow \mathfrak{so}(4k,4l), \qquad T_{x \wedge y}^s(z) = B_s(x,z) y - B_s(y,z) x,  \]
where $B_1(x,y) = B_0(y,x) = \langle y , x \rangle$, and $B_2(x,y) = (x,y) = \textrm{Re}\langle x, y \rangle$, then $T^2$ induces the isomorphism $\Lambda^2 (\mathbb{C}^{2k,2l}_\mathbb{R}) \cong \mathfrak{so}(4k,4l)$ as $\mathfrak{so}(4k,4l)$-modules. Observe that for every $x,y,z,w \in \mathbb{C}^{2k,2l}$,
	\[ \langle T_{x \wedge y}^0(z) , w \rangle = - \langle z, T_{x \wedge y}^0(w) \rangle, \qquad 
		\langle T_{x \wedge y}^1(z) , w \rangle = - \langle T_{x \wedge y}^1(w) , z  \rangle , \]
so $T_{x \wedge y}^0 \in \mathfrak{u}(2k,2l) \subset \mathfrak{so}(4k,4l)$ and $\textrm{Im}(T^0) \cap \textrm{Im}(T^1) = 0$, moreover $T^s$ induce real $\mathfrak{u}(2k,2l)$-homomorphisms so that Lemma \ref{Schur} implies that $T^1$ is injective in $(\Lambda^2 \mathbb{C}^{2k,2l})_\mathbb{R}$ and $\mathfrak{u}(2k,2l) = \textrm{Im}(T^0)$. This gives us the decomposition
	\[ \mathfrak{so}(4k,4l) = \mathfrak{u}(2k,2l) \oplus W, \qquad W = \textrm{Im}(T^1) \cong (\Lambda^2 \mathbb{C}^{2k,2l})_\mathbb{R} \]
as $\mathfrak{u}(2k,2l)$-modules. Recall that $\Lambda^2 \mathbb{C}^{2k,2l} = V(\omega_2) \oplus \mathbb{C} \theta$ as $\mathfrak{sp}(2n,\mathbb{C})$-modules, for 
	\[ \theta = \sum_{s = 1}^n \varepsilon_s e_s \wedge e^s, \]
and Proposition \ref{real-forms} tells us that there exists a real $\mathfrak{sp}(k,l)$-representation $W_0$ such that $W_0 + i W_0 = V(\omega_2)$ and $\mathfrak{su}(2k,2l) = \mathfrak{sp}(k,l) \oplus V_0$, where $V_0 = S(W_0)$ and
	\[ S : \Lambda^2 \mathbb{C}^{2k,2l} \rightarrow \mathfrak{gl}(2n,\mathbb{C}), \qquad
		S_{x \wedge y}(z) = \omega(x,z)y - \omega(y,z)x
	\]
is a $\mathfrak{sp}(2n,\mathbb{C})$-homomorphism. Observe that the elements
	\[ 	\left(\begin{array}{cc} 0 & I_{2n} \\ - I_{2n} & 0 \end{array}\right), \quad 
	\left(\begin{array}{cccc} 0 & -I_n & 0 & 0 \\ I_n & 0 & 0 & 0  \\ 0 & 0 & 0 & I_n \\ 0 & 0 & -I_n & 0\end{array}\right), \quad
		\left(\begin{array}{cccc} 0 & 0 & 0 & I_n \\0 & 0 & -I_n & 0 \\ 0 & I_n & 0 & 0 \\ -I_n & 0 & 0 & 0\end{array}\right),
	\]
that represent $z \mapsto i z$, $T_\theta$ and $i T_\theta$ respectively in the basis $\{ e_s, e^s, ie_s, ie^s \}$ of $\mathbb{C}^{2k,2l}_\mathbb{R}$, generate a Lie algebra isomorphic to $\mathfrak{sp}(1)$ commuting with $\mathfrak{sp}(k,l)$. Thus we have the decomposition 
	\[ \mathfrak{so}(4k,4l) = \mathfrak{sp}(k,l) \oplus \mathfrak{sp}(1) \oplus V_0 \oplus V_1 \oplus V_2, \]
where $V_0 = S(W_0)$, $V_1 = T^1(W_0)$, $V_2 = T^1(i W_0)$ and thus $V_s \otimes \mathbb{C} \cong V(\omega_2)$ as complex representations of $\mathfrak{sp}(k,l)$. Consider now $s = k+1$ if $k,l \geq 1$ and $s = 2$ otherwise so that
	\[ a = e_1 \wedge e^{s} - e_{s} \wedge e^1,  b = e_1 \wedge e_{s} - e^1 \wedge e^{s}  \in W_0, \]
i.e. such that $S_a, S_b \in \mathfrak{u}(2k,2l)$, a straightforward computation shows that $[S_a, S_b](e_1) = [T^1_a, T^1_b](e_1) = - [T^1_{ia}, T^1_{ib}](e_1) = \beta e^1$ for some $\beta \neq 0$, in particular what we have is
	\[ [V_s, V_s] \neq 0, \qquad \forall \ s = 0,1,2, \]
where $[\cdot , \cdot]$ is the Lie bracket in $\mathfrak{so}(4k,4l)$. To finalize the proof of the Lemma, observe that the Lie bracket induces a $\mathfrak{sp}(k,l)$-homomorphism
	\[ L : \Lambda^2 V_s \rightarrow \mathfrak{so}(4k,4l), \qquad L(a \wedge b) = [a,b], \]
that complexifies to the homomorphism
	\[
		L_\mathbb{C} : \Lambda^2 V(\omega_2) \rightarrow \mathfrak{sp}(2n,\mathbb{C}) \oplus \mathbb{C}^3 \oplus V(\omega_2) \oplus V(\omega_2) \oplus V(\omega_2).
	\]
We have a decomposition as $\mathfrak{sp}(2n,\mathbb{C})$-modules given by Lemma \ref{L1.1}
	\[
		\Lambda^2 V(\omega_2) \cong \mathfrak{sp}(2n,\mathbb{C}) \oplus V(\omega_1 + \omega_3),
	\]
so that Lemma \ref{Schur} and the fact that $L_\mathbb{C} \neq 0$ implies that $L_\mathbb{C}$ is just the projection into the factor $\mathfrak{sp}(2n,\mathbb{C})$, in particular
	\[ [V_s, V_s] = \mathfrak{sp}(k,l), \qquad \forall s = 0,1,2 \]
and the Lemma follows.
\end{proof}

Proposition \ref{real-forms} tells us that $V(\omega_2)$ and $V(2 \omega_1) \cong \mathfrak{sp}(2n,\mathbb{C})$ admit real forms, on the other hand $V(\omega_1)_\mathbb{R}$ and $V(\omega_3)_\mathbb{R}$ are irreducible real representations of $\mathfrak{sp}(k,l)$ (see \cite[Table 5]{Oni}), so we may find the non-trivial irreducible representations of dimension less that $\mathfrak{sp}(k,l)$ as given by the following 

\begin{cor}\label{C1.3}
If $V$ is an irreducible real representation of $\mathfrak{sp}(k,l)$, such that 
	\[ dim(V) \leq dim(\mathfrak{sp}(k,l)), \]
then either $V \cong \mathbb{C}^{2k,2l}_\mathbb{R}$, $V \otimes \mathbb{C} \cong V(\omega_2)$ or $V \otimes \mathbb{C} \cong \mathfrak{sp}(2n,\mathbb{C})$, the last one corresponding to the highest weight $2 \omega_1$.
\end{cor}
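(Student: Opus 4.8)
The plan is to leverage the classification of complex irreducible representations obtained in Corollary \ref{C1.2} and transfer it to the real setting via the dictionary between real and complex irreducible representations recalled just before Lemma \ref{Schur}. First I would observe that $\dim_\R \mathfrak{sp}(k,l) = \dim_\C \mathfrak{sp}(2n,\C) = n(2n+1)$, since $\mathfrak{sp}(k,l)$ is a real form. Given an irreducible real representation $V$ of $\mathfrak{sp}(k,l)$, there is an associated highest weight $\lambda \in \Gamma^d$ of $\mathfrak{sp}(2n,\C)$, and $V$ is either a real form of $V(\lambda)$ (so $\dim_\R V = \dim_\C V(\lambda)$) or the realification $V(\lambda)_\R$ (so $\dim_\R V = 2 \dim_\C V(\lambda)$). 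In both cases $\dim_\C V(\lambda) \le \dim_\R V \le n(2n+1) = \dim_\C \mathfrak{sp}(2n,\C)$, so Corollary \ref{C1.2} forces $\lambda \in \{\omega_1, \omega_2, 2\omega_1\}$ when $n \ne 3$, with the extra possibility $\lambda = \omega_3$ when $n = 3$.

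Next I would go through these finitely many cases. For $\lambda = \omega_1$: $V(\omega_1) = \C^{2n}$ does not admit a real form as a $\mathfrak{sp}(k,l)$-module (it carries an invariant nondegenerate symplectic form but no invariant real structure — equivalently it is of quaternionic type), so the only irreducible real representation with this highest weight is the realification $\C^{2k,2l}_\R$, which has dimension $4n < n(2n+1)$ for $n \ge 2$. For $\lambda = \omega_2$: by Proposition \ref{real-forms}, $V(\omega_2)$ admits a real form $W_0$, with $\dim_\R W_0 = n(2n-1) - 1 < n(2n+1)$; its realification has dimension $2(n(2n-1)-1)$, which exceeds $n(2n+1)$ for $n \ge 3$, so only the real form survives and contributes the case $V \otimes \C \cong V(\omega_2)$. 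For $\lambda = 2\omega_1$: again by Proposition \ref{real-forms} (or by noting $\mathfrak{sp}(k,l)$ itself is a real form of $V(2\omega_1) \cong \mathfrak{sp}(2n,\C)$ via the adjoint representation), a real form exists with dimension exactly $n(2n+1) = \dim \mathfrak{sp}(k,l)$, while the realification has dimension $2n(2n+1)$, too large; this gives the case $V \otimes \C \cong \mathfrak{sp}(2n,\C)$. Finally, for the exceptional $n = 3$ case with $\lambda = \omega_3$: here $V(\omega_3)_\R$ is listed as an irreducible real representation in \cite[Table 5]{Oni}, so I must check its dimension; $\dim_\C V(\omega_3) = \binom{6}{3} - \binom{6}{1} = 14$, and the realification has dimension $28$, whereas $\dim \mathfrak{sp}(3,\cdot) = 21$, so this realification is in fact too large and is excluded — leaving exactly the three stated possibilities even when $n = 3$.

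The main obstacle I anticipate is not any single computation but getting the real-versus-complex bookkeeping exactly right: for each candidate $\lambda$ I must correctly determine which of the two constructions (real form or realification) actually produces an irreducible real $\mathfrak{sp}(k,l)$-module, and this is a question about the type (real, complex, or quaternionic) of $V(\lambda)$ as a representation of the specific real form $\mathfrak{sp}(k,l)$ rather than of a compact form. The cited \cite[Table 5]{Oni} and Proposition \ref{real-forms} resolve precisely this: Proposition \ref{real-forms} supplies real forms for $\omega_2$ and $2\omega_1$, while $\omega_1$ and $\omega_3$ are of quaternionic type and only appear via realification. Once the types are pinned down, the dimension inequalities above eliminate every case except the three claimed, and the corollary follows. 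I would also remark that the statement only asserts necessity, so I do not need to verify that each surviving case is actually realized — though $\C^{2k,2l}_\R$, the real form $W_0$ of $V(\omega_2)$, and $\mathfrak{sp}(k,l)$ itself exhibit all three.
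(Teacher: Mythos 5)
Your proposal is correct and follows essentially the same route as the paper's proof: bound the possible highest weights via Corollary \ref{C1.2}, decide real form versus realification using Proposition \ref{real-forms} and \cite[Table 5]{Oni}, and eliminate the remaining candidates (in particular $\omega_3$ at $n=3$, since $2\,\textrm{dim}_{\mathbb{C}}V(\omega_3)=28>21$) by dimension counts. The only difference is organizational — you argue weight by weight while the paper splits into the real-form and realification cases — which does not change the substance.
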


\begin{proof}
If $V$ is a real form of a complex irreducible representation $W$, then 
	\[ dim_\mathbb{C}(W) \leq n(2n + 1) \]
for $n = k + l$ so that by Corollary \ref{C1.2} and the previous discusion we have that $W$ is isomorphic to either $V(\omega_2)$ or $V(2 \omega_1)$. If on the other hand $V = W_\mathbb{R}$, where $W$ is an irreducible complex representation, then $dim_\mathbb{R}(V) = 2 dim_\mathbb{C}(W) \leq n(2n + 1)$ so according to Corollary \ref{C1.2}, $W$ is isomorphic to either $V(\omega_1)$ or $V(\omega_3)$ in the case $n = 3$, but the latter is impossible because 
	\[ 2 dim_\mathbb{C}(V(\omega_3)) = 28 > n(2n + 1) = 21, \]
so that $W \cong V(\omega_1)$ and the result follows.
\end{proof}

If $\rho : \mathfrak{sp}(k,l) \rightarrow \mathfrak{gl}(V)$ is a real representation, we say that the inner product induced by a symmetric bilinear form $B$ is \textbf{invariant} under $\mathfrak{sp}(k,l)$ if $\rho(\mathfrak{sp}(k,l)) \subset \mathfrak{so}(V,B)$, so that by our definition of the quaternionic symplectic algebra, $(x,y) = \textrm{Re} \langle x , y \rangle $ is invariant under $\mathfrak{sp}(k,l)$.

\begin{lema}\label{R1.4}
If $V$ is an irreducible real $\mathfrak{sp}(k,l)$-module and $B_1$, $B_2$ are two symmetric bilinear forms invariant under the $\mathfrak{sp}(k,l)$ action, then there is a $\lambda \in \mathbb{R}$ such that $B_2 = \lambda B_1$ and $\mathfrak{so}(V, B_1) = \mathfrak{so}(V,B_2)$. Moreover, if $V$ and $W$ are non-isomorphic irreducible $\mathfrak{sp}(k,l)$-modules and $B$ is an invariant symmetric bilinear form in $V \oplus W$, then 
	\[ B(x,y) = 0, \qquad \forall x \in V, \ y \in W. \]
\end{lema}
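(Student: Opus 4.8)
The plan is to treat both assertions as consequences of Schur's lemma. For the first, I would reduce it to the claim that the space of $\mathfrak{sp}(k,l)$-invariant symmetric bilinear forms on an irreducible real module $V$ is at most one-dimensional. We may assume $B_1$ and $B_2$ are both nonzero: the radical of an invariant symmetric form is a submodule, hence trivial for a nonzero such form, so in that case both $B_1$ and $B_2$ are nondegenerate (in the intended applications $B_1$ is anyway a fixed nondegenerate reference form). Granting the dimension bound, $B_2 = \lambda B_1$ for some $\lambda \in \mathbb{R}$, necessarily $\lambda \ne 0$ because $B_2$ is nondegenerate, and then being skew-symmetric with respect to $B_2$ is exactly the same condition as with respect to $B_1$, so $\mathfrak{so}(V, B_1) = \mathfrak{so}(V, B_2)$.

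To prove the dimension bound, fix the nondegenerate invariant symmetric form $B_1$ and write any invariant bilinear form on $V$ as $B(x,y) = B_1(Ax, y)$ for a unique $A \in \mathrm{End}(V)$. Invariance of $B$ and $B_1$ forces $A$ to commute with the action, i.e. $A \in D := \mathrm{End}_{\mathfrak{sp}(k,l)}(V)$, and since $B_1$ is symmetric the adjoint map with respect to $B_1$ restricts to an involutive anti-automorphism $\dagger$ of the real division algebra $D$, with $B$ symmetric precisely when $A = A^{\dagger}$. So the space in question is the $\dagger$-fixed subspace of $D$. By Schur's lemma $D$ is $\mathbb{R}$, $\mathbb{C}$ or $\mathbb{H}$; since every irreducible complex representation of $\mathfrak{sp}(k,l)$ is self-conjugate — equivalently, of real or quaternionic type, and for the modules one actually needs this is just the list of Corollary \ref{C1.3} — the case $D = \mathbb{C}$ does not occur. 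If $D = \mathbb{R}$ we are done. If $D = \mathbb{H}$, then $V = U_{\mathbb{R}}$ for a quaternionic-type complex irreducible module $U$, which therefore carries an invariant \emph{antisymmetric} $\mathbb{C}$-bilinear form $\beta$; the real and imaginary parts of $\beta$ are two linearly independent invariant antisymmetric real forms on $V$, so the $(-1)$-eigenspace of $\dagger$ in $\mathbb{H}$ has dimension at least $2$. Among the involutive anti-automorphisms of $\mathbb{H}$, quaternionic conjugation is the only one whose antisymmetric part has dimension $\ge 2$ (the others have the form $x \mapsto u \bar x u^{-1}$ with $u$ purely imaginary, and a one-dimensional antisymmetric part), so $\dagger$ is conjugation and its fixed subspace is $\mathbb{R}\cdot\mathrm{id}$. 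I expect this identification of the involution in the quaternionic case to be the only subtle point; it can also be bypassed by checking the three cases of Corollary \ref{C1.3} directly — for $\mathbb{C}^{2k,2l}_{\mathbb{R}}$ the invariant bilinear forms are spanned by the real and imaginary parts of $\langle\cdot,\cdot\rangle$ and of $\omega$, of which only $\mathrm{Re}\langle\cdot,\cdot\rangle$ is symmetric, while for the real forms of $V(\omega_2)$ and of $V(2\omega_1)$ the endomorphism algebra is already $\mathbb{R}$.

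For the last assertion, let $V$ and $W$ be non-isomorphic irreducible modules and $B$ an invariant symmetric bilinear form on $V \oplus W$. The restriction $B|_{V \times W}$ corresponds to an $\mathfrak{sp}(k,l)$-homomorphism $V \to W^{*}$; since every $\mathfrak{sp}(k,l)$-module is self-dual, $W^{*} \cong W \not\cong V$, so this homomorphism vanishes by Schur's lemma (Lemma \ref{Schur}), which is exactly $B(x,y) = 0$ for all $x \in V$ and $y \in W$.
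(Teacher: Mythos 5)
Your overall route is the same as the paper's: show the radical of a nonzero invariant symmetric form is a submodule and hence zero, write $B_2(x,y)=B_1(Ax,y)$ with $A$ an intertwiner, use Schur's Lemma and Frobenius' theorem to place $A$ in $\mathbb{R}$, $\mathbb{C}$ or $\mathbb{H}$, and then use self-adjointness with respect to $B_1$ to force $A$ to be a real scalar; your cross-term argument via a homomorphism $V\to W^{*}$ is also the paper's, except that you invoke self-duality of all irreducible real $\mathfrak{sp}(k,l)$-modules (true, since every complex irreducible of $\mathfrak{sp}(2n,\mathbb{C})$ is self-dual, but worth a word) where the paper instead uses nondegeneracy of $B|_W$ to identify $W\cong W^{*}$. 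You correctly isolate the delicate point, which the paper passes over with the bare assertion that ``the adjoint is precisely the conjugation'' on the division algebra.

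However, your treatment of that point still has a gap: from ``$U$ is of quaternionic type'' you conclude ``therefore $U$ carries an invariant antisymmetric $\mathbb{C}$-bilinear form.'' This is not a formal consequence of quaternionic type: the existence of an invariant antilinear $J$ with $J^{2}=-1$ and the symmetry of the (unique up to scalar) invariant bilinear form are independent pieces of data for a general real form. For instance, the standard module $\mathbb{C}^{2n}$ of $\mathfrak{so}^{*}(2n)$ is of quaternionic type and carries an invariant \emph{symmetric} bilinear form; for its realification the space of invariant symmetric real forms is $3$-dimensional (the involution $\dagger$ is a reversion, not conjugation), so the analogue of the lemma fails there. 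What makes the statement true for $\mathfrak{sp}(k,l)$ is a fact specific to this real form: its Cartan involution is conjugation by $I_{k,l}$, whose square is the identity, so the real/quaternionic type of every self-dual irreducible module coincides with the type for the compact form $\mathfrak{sp}(n)$, where Frobenius--Schur gives quaternionic $\Leftrightarrow$ symplectic (antisymmetric form); alternatively one can quote the tables in \cite{Oni}. This is exactly the content of the paper's unproved assertion, so your argument identifies the right issue but does not yet close it. Note also that your proposed fallback, checking the three modules of Corollary \ref{C1.3} directly, only proves the lemma for modules of dimension at most $\dim\mathfrak{sp}(k,l)$; that suffices for every application of the lemma made in the paper, but not for the statement as written. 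The remaining steps --- the classification of involutive anti-automorphisms of $\mathbb{H}$, the conclusion that $\dagger$ is conjugation once its $(-1)$-eigenspace has dimension at least $2$, the exclusion of $D=\mathbb{C}$ by self-conjugacy, and the Schur argument for the cross term --- are correct.
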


\begin{proof}
If both $B_1$ and $B_2$ are zero, the conclusion is trivial so we suppose that $B_1 \neq 0$. Observe that the Kernel of $B_1$
	\[ Ker(B_1) = \{ x \in V : B_1(x,y) = 0, \quad \forall y \in V \} \]
is a $\mathfrak{sp}(k,l)$-submodule of $V$ because $B_1$ is invariant and cannot be $V$ because $B_1$ is not zero, so that $Ker(B_1) = 0$ and $B_1$ is non-degenerated, this implies that there exist a linear map $L \in \mathfrak{gl}(V)$ such that
	\[	B_2(x,y) = B_1(Lx,y), \qquad \forall x,y \in V \]
and the invariance of $B_2$ implies that $L$ is a $\mathfrak{sp}(k,l)$-homomorphism. If we denote $Hom(V)$ the vector space of $\mathfrak{sp}(k,l)$-homomorphisms of $V$, then we can see that it is a finite dimension associative algebra over $\mathbb{R}$ under composition and by Lemma \ref{Schur}, it is a division algebra, i.e. every non-zero element is invertible. Thus, by Frobenius' Theorem (see \cite[Pg. 158]{Fro}), $Hom(V)$ is isomorphic as an algebra to $\mathbb{R}$, $\mathbb{C}$ or the quaternions $\mathbb{H}$. Now $B_1$ and $B_2$ are symmetric so that 
	\[ B_1(Lx,y) = B_1(x , Ly ), \qquad \forall x,y \in V, \]
and thus $L$ is self-adjoint, but the adjoint is precisely the conjugation in the corresponding algebras so that $L = \lambda Id_V$ for some $\lambda \in \mathbb{R}$. Observe that $A \in \mathfrak{so}(V,B_s)$ if by definition
	\[ B_s(Ax , y) + B_s(x,Ay) = 0, \qquad \forall \ x,y \in V \]
and this condition is invariant under multiplication by scalars, so we have $\mathfrak{so}(V, B_1) = \mathfrak{so}(V,B_2)$. Finally, if $B$ is an invariant symmetric bilinear form in $V \oplus W$ and suppose it is not identically zero, because in this case the result follows, this implies that it is non-degenerated when restricted to at least one irreducible submodule, suppose then that $B$ is non-degenerated when restricted to $W$ and consider the linear map
	\[ \widehat{B} : V \rightarrow W^*, \qquad \widetilde{B}(x)(y) = B(x,y), \]
then $\widehat{B}$ is a $\mathfrak{sp}(k,l)$-homomorphism that is either zero or an isomorphism, but 
	\[ W \rightarrow W^*, \qquad x \rightarrow B(x, \cdot) \]
is an isomorphism of $\mathfrak{sp}(k,l)$-modules and $V$ is not isomorphic to $W$, so that we have $\widehat{B} \equiv 0$ and the result follows.
\end{proof}

\begin{cor}\label{C1.4}
If $V$ is a non-trivial real $\mathfrak{sp}(k,l)$-module with $dim_\mathbb{R}(V) \leq 4(k+l)$ and a $\mathfrak{sp}(k,l)$-invariant inner product given by a bilinear form $B$, then $V$ is irreducible with highest weight $\omega_1$ and under the identification $V \cong \mathbb{R}^{4(k+l)}$ as $\mathfrak{sp}(k,l)$-modules $B = \lambda (\cdot , \cdot )$ for some non-zero $\lambda \in \mathbb{R}$. Moreover there is a $\mathfrak{sp}(k,l)$-module decomposition
	\begin{displaymath}
		\mathfrak{so}(V,B) = \mathfrak{sp}(k,l) \oplus \mathfrak{sp}(1) \oplus V_0 \oplus V_1 \oplus V_2
	\end{displaymath}
where $\mathfrak{sp}(1)$ is also a subalgebra of $\mathfrak{so}(4k,4l)$ that commutes with $\mathfrak{sp}(k,l)$, for every $s \in \{0,1,2\}$, $V_s$ is an irreducible $\mathfrak{sp}(k,l)$-submodule whose complexification is isomorphic to $V(\omega_2)$ and 
	\begin{displaymath}
		[V_s , V_s] = [\mathfrak{sp}(k,l) , \mathfrak{sp}(k,l)] = \mathfrak{sp}(k,l),
	\end{displaymath}
where $[\cdot , \cdot]$ is the Lie bracket in $\mathfrak{so}(V,B)$.
\end{cor}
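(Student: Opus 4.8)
The plan is to assemble Corollary~\ref{C1.3}, Lemma~\ref{R1.4} and Lemma~\ref{L1.6}, which between them already contain all the necessary content; the actual work is only to identify the module $V$ and the form $B$ first, and then to quote Lemma~\ref{L1.6}.

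First I would determine $V$. Since $\mathfrak{sp}(k,l)$ is semisimple, $V$ splits as a direct sum of irreducible real submodules; were all of them trivial, $V$ would itself be trivial, against the hypothesis, so at least one summand $W$ is a non-trivial irreducible real $\mathfrak{sp}(k,l)$-module with $\dim_\mathbb{R}(W) \leq \dim_\mathbb{R}(V) \leq 4(k+l) = 4n$. By Corollary~\ref{C1.3} the only non-trivial irreducible real $\mathfrak{sp}(k,l)$-modules of dimension at most $\dim(\mathfrak{sp}(k,l)) = n(2n+1)$ are $\mathbb{C}^{2k,2l}_\mathbb{R}$ (of real dimension $4n$), a real form of $V(\omega_2)$ (of dimension $n(2n-1)-1$), and a real form of $V(2\omega_1) \cong \mathfrak{sp}(2n,\mathbb{C})$ (of dimension $n(2n+1)$); as $n(2n-1)-1 > 4n$ and $n(2n+1) > 4n$ for $n \geq 3$, the bound $\dim_\mathbb{R}(W) \leq 4n$ leaves only $W \cong \mathbb{C}^{2k,2l}_\mathbb{R}$. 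But then $\dim_\mathbb{R}(W) = 4n$ already saturates the bound on $\dim_\mathbb{R}(V)$, so $V = W \cong \mathbb{C}^{2k,2l}_\mathbb{R} \cong \mathbb{R}^{4k,4l}$; in particular $V$ is irreducible with highest weight $\omega_1$.

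Next I would pin down $B$. Under the identification $V \cong \mathbb{R}^{4k,4l}$ the form $(x,y) = \mathrm{Re}\,\langle x,y\rangle$ is a non-zero $\mathfrak{sp}(k,l)$-invariant symmetric bilinear form, and $B$ is another such form on the irreducible module $V$, so Lemma~\ref{R1.4} gives $B = \lambda(\cdot,\cdot)$ with $\lambda \in \mathbb{R}$, and $\lambda \neq 0$ since $B$ is a non-degenerate inner product; the same lemma also yields $\mathfrak{so}(V,B) = \mathfrak{so}(V,(\cdot,\cdot)) = \mathfrak{so}(4k,4l)$. Applying Lemma~\ref{L1.6} to $\mathfrak{so}(V,B) = \mathfrak{so}(4k,4l)$ then furnishes the decomposition $\mathfrak{so}(V,B) = \mathfrak{sp}(k,l) \oplus \mathfrak{sp}(1) \oplus V_0 \oplus V_1 \oplus V_2$, with $\mathfrak{sp}(1)$ commuting with $\mathfrak{sp}(k,l)$, each $V_s$ irreducible with $V_s \otimes \mathbb{C} \cong V(\omega_2)$, and $[V_s,V_s] = \mathfrak{sp}(k,l)$, which is exactly the assertion. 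No serious obstacle arises; the only point needing care is the dimension bookkeeping, in particular making sure that no realification of a small complex representation (for instance $V(\omega_3)_\mathbb{R}$ in the borderline case $n = 3$) sneaks in under the bound $4n$ — but that is precisely what the proof of Corollary~\ref{C1.3} already ruled out.
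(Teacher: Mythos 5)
Your proposal is correct and follows essentially the same route as the paper: use Corollary \ref{C1.3} plus dimension counting to force $V \cong \mathbb{C}^{2k,2l}_\mathbb{R}$ (highest weight $\omega_1$), then invoke Lemma \ref{R1.4} to get $B = \lambda(\cdot,\cdot)$ and $\mathfrak{so}(V,B) = \mathfrak{so}(4k,4l)$, and finish with Lemma \ref{L1.6}. Your explicit complete-reducibility step showing $V$ must itself be irreducible is a detail the paper's proof leaves implicit, but it is the same argument, not a different approach.
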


\begin{proof}
By Corollary \ref{C1.3}, $V$ is irreducible with highest weight either $\omega_2$ or $\omega_1$, but 
	\[ \textrm{dim}(V(\omega_2)) > \textrm{dim}(V(\omega_1)), \qquad k+l > 2, \]
so we have that $V$ is isomorphic to the irreducible module $\mathbb{C}^{2k,2l}_\mathbb{R}$ corresponding to the highest weight $\omega_1$, now the result follows from Lemma \ref{R1.4} and Lemma \ref{L1.6}.
\end{proof}

\subsection{Deformations of symmetric pairs}\label{Def}

Recall that for every $k,l \in \mathbb{N}$, the linear structure of the hermitian space $\mathbb{H}^{k,l}$ is given by
	\[ Sp(k,l) \times Sp(1) \times \mathbb{H}^{k,l} \rightarrow \mathbb{H}^{k,l}, \qquad
		(g, \eta, Z) \mapsto g Z \eta^{-1},
		\]
and the infinitesimal version of this is 
	\begin{equation}\label{action}	\mathfrak{sp}(k,l) \times \mathfrak{sp}(1) \times \mathbb{H}^{k,l} \rightarrow \mathbb{H}^{k,l}, \qquad 
		(X, \zeta, Z) \mapsto X Z - Z \zeta.
	\end{equation}
This linear information is encoded in the Lie bracket structure of the symplectic algebras in the next dimension as given by the following proposition, in this section we study uniqueness properties of this information.

\begin{prop}\label{PA.1}
There is a decomposition of stable $\mathfrak{sp}(k,l) \oplus \mathfrak{sp}(1)$-modules
	\[ \mathfrak{sp}(k,l+1) = \mathfrak{sp}(k,l) \oplus \mathfrak{sp}(1) \oplus \mathbb{H}^{k,l} \]
when considered with the commutator structure $[X,Y] = XY - YX$. Moreover there is a $\mathfrak{sp}(k,l) \oplus \mathfrak{sp}(1)$-homomorphism induced by the Lie bracket
	\[ \Omega : \Lambda^2 \mathbb{H}^{k,l} \rightarrow \mathfrak{sp}(k,l) \oplus \mathfrak{sp}(1), \qquad \Omega(x \wedge y) = [x,y], \]
given explicitly by
	\[ \Omega (Z \wedge W ) = \left(W Z_ 0^* - Z W_0^*\right) + \left(W_0^* Z - Z_0^* W \right), \]
where $Z_0 = - I_{k,l} Z$, and $W^* = \overline{W}^t$. Analogously, there is a decomposition of $\mathfrak{sp}(k,l) \oplus \mathfrak{sp}(1)$-modules
	\[ \mathfrak{sp}(k+1,l) = \mathfrak{sp}(k,l) \oplus \mathfrak{sp}(1) \oplus \mathbb{H}^{k,l} \]
together with a $\mathfrak{sp}(k,l) \oplus \mathfrak{sp}(1)$-homomorphism induced by the Lie bracket of $\mathfrak{sp}(k+1,l)$
	\[ \Omega' : \Lambda^2 \mathbb{H}^{k,l} \rightarrow \mathfrak{sp}(k,l) \oplus \mathfrak{sp}(1),  \]
that we can see is just $\Omega' = - \Omega$.
\end{prop}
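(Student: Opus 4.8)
The plan is to work entirely inside the quaternionic matrix model $\mathfrak{sp}(k,l+1) = \{\, Y \in M_{n+1}(\mathbb{H}) : Y^{*} I_{k,l+1} + I_{k,l+1} Y = 0 \,\}$ with $I_{k,l+1} = \mathrm{diag}(I_{k,l}, -1)$, and to read the whole statement off from the block structure of $(n+1)\times(n+1)$ matrices over $\mathbb{H}$.

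First I would write a general $Y \in \mathfrak{sp}(k,l+1)$ as $Y = \bigl( \begin{smallmatrix} X & Z \\ W & \mu \end{smallmatrix} \bigr)$ with $X \in M_{n}(\mathbb{H})$, $Z \in \mathbb{H}^{n}$ a column, $W$ a row, and $\mu \in \mathbb{H}$, and expand $Y^{*}I_{k,l+1} + I_{k,l+1}Y = 0$ block by block. The $(1,1)$ block gives $X^{*}I_{k,l} + I_{k,l}X = 0$, i.e. $X \in \mathfrak{sp}(k,l)$; the two off-diagonal blocks both reduce to $W = Z^{*}I_{k,l}$, equivalently $W = -Z_{0}^{*}$ with $Z_{0} = -I_{k,l}Z$; and the $(2,2)$ block gives $\mu^{*} + \mu = 0$, i.e. $\mu \in \mathrm{Im}\,\mathbb{H} = \mathfrak{sp}(1)$. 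This exhibits $\mathfrak{sp}(k,l+1) = \mathfrak{sp}(k,l) \oplus \mathfrak{sp}(1) \oplus \mathbb{H}^{k,l}$ as vector spaces, with $\mathfrak{sp}(k,l)\oplus\mathfrak{sp}(1)$ realized as the block-diagonal subalgebra and $\mathbb{H}^{k,l}$ as the subspace of matrices $u_{Z} := \bigl( \begin{smallmatrix} 0 & Z \\ -Z_{0}^{*} & 0 \end{smallmatrix} \bigr)$, $Z \in \mathbb{H}^{k,l}$.

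For the module structure I would note that conjugation by $\mathrm{diag}(I_{n}, -1) \in Sp(k,l+1)$ is an involutive automorphism of $\mathfrak{sp}(k,l+1)$ whose $(+1)$-eigenspace is the block-diagonal part $\mathfrak{sp}(k,l)\oplus\mathfrak{sp}(1)$ and whose $(-1)$-eigenspace is $\mathbb{H}^{k,l}$; this is a $\mathbb{Z}_{2}$-grading, so each of the three summands is stable under $\mathrm{ad}(\mathfrak{sp}(k,l)\oplus\mathfrak{sp}(1))$, one has $[\mathfrak{sp}(k,l)\oplus\mathfrak{sp}(1), \mathbb{H}^{k,l}] \subseteq \mathbb{H}^{k,l}$, and $[\mathbb{H}^{k,l}, \mathbb{H}^{k,l}] \subseteq \mathfrak{sp}(k,l)\oplus\mathfrak{sp}(1)$. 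A direct multiplication gives $\bigl[\bigl(\begin{smallmatrix} X & 0 \\ 0 & \zeta\end{smallmatrix}\bigr), u_{Z}\bigr] = u_{XZ - Z\zeta}$, where the identity in the lower-left block uses $X^{*}I_{k,l} = -I_{k,l}X$ and $\zeta^{*} = -\zeta$; hence the $\mathbb{H}^{k,l}$-summand carries precisely the action $(\ref{action})$. Since the bracket $\Omega(x \wedge y) = [x,y]$ is skew and $\mathrm{ad}$-equivariant, it defines a $\mathfrak{sp}(k,l)\oplus\mathfrak{sp}(1)$-homomorphism $\Lambda^{2}\mathbb{H}^{k,l} \to \mathfrak{sp}(k,l)\oplus\mathfrak{sp}(1)$, and multiplying out $u_{Z}u_{W} - u_{W}u_{Z} = \bigl(\begin{smallmatrix} WZ_{0}^{*} - ZW_{0}^{*} & 0 \\ 0 & W_{0}^{*}Z - Z_{0}^{*}W\end{smallmatrix}\bigr)$ yields the stated formula; along the way I would record the short verifications that the upper-left block satisfies the defining relation of $\mathfrak{sp}(k,l)$ and that the lower-right entry is purely imaginary, although both are already forced by the grading.

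Finally, for $\mathfrak{sp}(k+1,l)$ I would repeat the block expansion with $I_{k+1,l} = \mathrm{diag}(I_{k,l}, 1)$; the only change is that the off-diagonal relation becomes $W = Z_{0}^{*}$, so the embedded copy of $\mathbb{H}^{k,l}$ consists of the matrices $u'_{Z} = \bigl(\begin{smallmatrix} 0 & Z \\ Z_{0}^{*} & 0\end{smallmatrix}\bigr)$, and the same computation of $[u'_{Z}, u'_{W}]$ now produces exactly $-\Omega(Z\wedge W)$, so $\Omega' = -\Omega$. There is no deep obstacle in this argument — it is essentially bookkeeping with quaternionic block matrices — and the only point that demands genuine care is keeping the signs straight in the relation between $W$ and $Z$, and checking that the induced action on the $\mathbb{H}^{k,l}$-summand agrees with $(\ref{action})$ exactly rather than merely up to sign.
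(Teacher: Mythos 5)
Your proposal is correct and is essentially the paper's own argument: the paper simply exhibits the block embeddings (\ref{inclusion1}) and (\ref{inclusion2}) and asserts that the induced $\mathfrak{sp}(k,l)\oplus\mathfrak{sp}(1)$-structure is (\ref{action}), leaving to the reader exactly the block-matrix verifications you carry out. Your conventions differ only immaterially (sign of the off-diagonal entry, and placing the extra coordinate last rather than first for $\mathfrak{sp}(k+1,l)$, which is conjugate to the paper's embedding by a permutation), and these do not affect the induced module structure or the formula for $\Omega$, so the conclusion $\Omega' = -\Omega$ stands as stated.
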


\begin{proof}
Consider the inclusions
	\begin{equation}\label{inclusion1} \mathfrak{sp}(k,l) \times \mathfrak{sp}(1) \times \mathbb{H}^{k,l} \hookrightarrow \mathfrak{sp}(k,l+1), \qquad
		(X, \zeta, Z) \mapsto \left(\begin{array}{cc} A & -Z \\ Z_0^* & \zeta \end{array}\right)
		\end{equation}
and
	\begin{equation}\label{inclusion2} \mathfrak{sp}(k,l) \times \mathfrak{sp}(1) \times \mathbb{H}^{k,l} \hookrightarrow \mathfrak{sp}(k+1,l), \qquad 
		(X, \zeta, Z) \mapsto \left(\begin{array}{cc} \zeta & Z_0^* \\ Z & A \end{array}\right)
		\end{equation}
then in both cases, the $\mathfrak{sp}(k,l) \oplus \mathfrak{sp}(1)$-structure induced by the Lie bracket is precisely the structure defined in (\ref{action}).
\end{proof}

\begin{remark}\label{RA.1}
Recall that $(\mathfrak{s}, \mathfrak{t})$ is called a symmetric pair if $\mathfrak{s}$ is a Lie algebra and $\mathfrak{t}$ is the subalgebra of fixed points under an involutive automorphism of $\mathfrak{s}$, equivalently there is a $\mathfrak{t}$-stable decomposition $\mathfrak{s} = \mathfrak{t} \oplus W$ such that $[W,W] \subset \mathfrak{t}$. The previous two injections gives us symmetric pairs $(\h , \g \oplus \mathfrak{k})$, where $\g = \mathfrak{sp}(k,l)$, $\mathfrak{k} = \mathfrak{sp}(1)$ and $\h$ is either $\mathfrak{sp}(k,l+1)$ or $\mathfrak{sp}(k+1,l)$. Observe that in general, given the decomposition $\mathfrak{s} = \mathfrak{t} \oplus W$, the Lie algebra structrue of $\mathfrak{s}$ is completely determined by its $\mathfrak{t}$-module structure together with a $\mathfrak{t}$-homomorphism
	\[
		\Psi : \Lambda^2 W  \rightarrow \mathfrak{t},
			\]
but not every $\mathfrak{t}$-homomorphism gives a Lie algebra structure.
\end{remark}

Let $\h$ be either $\mathfrak{sp}(k,l+1)$ or $\mathfrak{sp}(k+1,l)$ and consider the decomposition
	\[ \h = \g \oplus \mathfrak{k} \oplus V, \]
where  $\g \cong \mathfrak{sp}(k,l)$, $\mathfrak{k} = \mathfrak{sp}(1)$ and $V = \mathbb{H}^{k,l}$. Denote by $\mathfrak{m} = \g \oplus V$ and consider the skew-symmetric bilinear map $[ \cdot , \cdot ]_m : \mathfrak{m} \times \mathfrak{m} \rightarrow \mathfrak{m}$ induced by the Lie bracket of $\h$ followed by the orthogonal projection $\h = \mathfrak{k} \oplus \mathfrak{m} \rightarrow \mathfrak{m}$.

\begin{prop}\label{PA.2}
There exist three elements $X,Y,Z \in V$ such that 
	\[
		[[Y,Z]_m , X]_m + [ [X , Y]_m, Z ]_m + [ [Z, X ]_m , Y]_m \neq 0.
			\] 
\end{prop}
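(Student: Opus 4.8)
The plan is to take the three vectors inside a single quaternionic coordinate line of $V=\mathbb{H}^{k,l}$, which collapses the whole computation to arithmetic in the quaternions, where non-commutativity forces the Jacobiator to be non-zero. First I would isolate the only two ingredients of $[\cdot,\cdot]_m$ that enter: for $X,Y,Z\in V$ one has $[Y,Z]_m=\Omega_\g(Y\wedge Z)\in\g$, the $\mathfrak{sp}(1)$-component of $\Omega(Y\wedge Z)$ being discarded by the projection $\h=\mathfrak{k}\oplus\m\to\m$, while for $A\in\g$ and $X\in V$ the bracket $[A,X]_m=[A,X]_\h$ is the module action, which by (\ref{action}) is left matrix multiplication $A\cdot X=AX$. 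So the Jacobiator lands in $V$. Moreover, since the Lie bracket of $\h$ itself satisfies the Jacobi identity, separating $\g$- and $\mathfrak{k}$-components in $\sum_{\textrm{cyc}}[[Y,Z]_\h,X]_\h=0$ and using that $\mathfrak{sp}(1)$ acts on $V$ by $\zeta\cdot v=-v\zeta$ yields the clean identity
	\[ [[Y,Z]_m,X]_m+[[X,Y]_m,Z]_m+[[Z,X]_m,Y]_m = X\,\Omega_{\mathfrak{k}}(Y\wedge Z)+Z\,\Omega_{\mathfrak{k}}(X\wedge Y)+Y\,\Omega_{\mathfrak{k}}(Z\wedge X), \]
so the failure of the Jacobi identity in $\m$ is measured entirely by the $\mathfrak{sp}(1)$-part $\Omega_{\mathfrak{k}}$, and it is this part that must be shown to be genuinely non-trivial.

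Next I would specialize to the line $\mathbb{H}e_1\subset V$ and invoke the explicit formula of Proposition \ref{PA.1}. Writing $Z=ze_1$, $W=we_1$ with $z,w\in\mathbb{H}$, using $Z_0=-I_{k,l}Z=-\varepsilon_1 z e_1$ and computing the outer products in $\Omega(Z\wedge W)=(WZ_0^*-ZW_0^*)+(W_0^*Z-Z_0^*W)$ gives
	\[ \Omega_\g(ze_1\wedge we_1)=\varepsilon_1(z\bar w-w\bar z)\,E_{11}, \qquad \Omega_{\mathfrak{k}}(ze_1\wedge we_1)=\varepsilon_1(\bar z w-\bar w z), \]
where $E_{11}$ is the matrix unit in the upper-left corner; note $z\bar w-w\bar z=2\,\textrm{Im}(z\bar w)$ is purely imaginary, so $\Omega_\g(ze_1\wedge we_1)$ does lie in $\g=\mathfrak{sp}(k,l)$. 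Feeding $X=xe_1$ into the module action ($A\cdot X=AX$) and assembling the cyclic sum, the Jacobiator of $(X,Y,Z)=(xe_1,ye_1,ze_1)$ equals
	\[ \varepsilon_1\bigl[(y\bar z-z\bar y)x+(x\bar y-y\bar x)z+(z\bar x-x\bar z)y\bigr]\,e_1. \]
Taking the quaternion units $x=i$, $y=j$, $z=1$, each of the three bracketed terms evaluates to $-2ij$, so the Jacobiator is $-6\varepsilon_1(ij)\,e_1\neq 0$ (as $ij\neq 0$ and $\varepsilon_1=\pm 1$), which proves the proposition.

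There is no genuine obstacle here: the statement reduces to a one-line computation once the two pieces of the bracket are written down, and the only care needed is bookkeeping with the non-commutativity of $\mathbb{H}$ and with the conventions ($\varepsilon_s$, and the sides on which scalars multiply) fixed in Proposition \ref{PA.1}. Alternatively one can bypass the line computation and argue abstractly from the displayed identity that $\sum_{\textrm{cyc}}X\,\Omega_{\mathfrak{k}}(Y\wedge Z)$ cannot vanish identically, since $\Omega_{\mathfrak{k}}$ is a non-zero $\g\oplus\mathfrak{k}$-homomorphism onto $\mathfrak{sp}(1)$ and $\mathfrak{sp}(1)$ acts non-trivially on $V$. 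The content of the proposition is precisely that the $\mathfrak{sp}(1)$-summand interferes, so that $\m=\mathfrak{k}\bs\h$ is a non-associative (indeed non-Lie) algebra, which is exactly what makes $Sp(1)\bs H$ a genuinely non-symmetric space.
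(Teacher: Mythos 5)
Your proof is correct, and it is essentially the paper's argument: the paper also reduces to a single quaternionic coordinate (via a $2\times 2$ block, i.e.\ an embedded $\mathfrak{sp}(1,1)$ as in (\ref{inclusion1})--(\ref{inclusion2})) and exhibits the failure of Jacobi on the triple corresponding to the units $1,i,j$, exactly as you do with $xe_1,ye_1,ze_1$. Your explicit line formulas for $\Omega_\g$, $\Omega_\mathfrak{k}$ and the identity expressing the Jacobiator through $\Omega_\mathfrak{k}$ are a tidy repackaging of the same computation (the latter identity reappears in the paper's Lemma \ref{LA.1}), so no substantive difference.
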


\begin{proof}
Consider first the case where $(k,l)$ is either $(1,0)$ or $(0,1)$, in these cases, the elements
	\[ 
		X = \left(\begin{array}{cc}	0 & 1 \\ -1 & 0 \end{array}\right), \quad	Y = \left(\begin{array}{cc}	0 & i \\ i & 0 \end{array}\right), \quad
	Z = \left(\begin{array}{cc}	0 & j \\ j & 0 \end{array}\right) \in V,
			\]
give us the relations
	\[ [ Y, [X , Z ]_m ]_m = [ [X , Y]_m, Z ]_m = - [X , [Y,Z]_m ]_m = \pm \left(\begin{array}{cc} 0 & 2 ij \\ 2 ij & 0 \end{array}\right)	\]
and the result follows. The general cases follow from the previous ones by observing that we can consider a block-diagonal embedding $\mathfrak{sp}(1,1) \hookrightarrow \h$ analogous to (\ref{inclusion1}) and (\ref{inclusion2}) where the corresponding Lie brackets preserve the decomposition $\mathfrak{sp}(1) \oplus \mathfrak{sp}(1) \oplus \mathbb{H} \hookrightarrow \g \oplus \mathfrak{k} \oplus V$.
\end{proof}

\begin{cor}\label{CA.1}
The non-associative algebra $(\mathfrak{m}, [\cdot , \cdot ]_m)$ is not a Lie algebra.
\end{cor}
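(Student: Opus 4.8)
The plan is to derive Corollary \ref{CA.1} as an immediate formal consequence of Proposition \ref{PA.2}. Recall that a (finite-dimensional, real) Lie algebra is by definition a vector space equipped with a bilinear, skew-symmetric bracket satisfying the Jacobi identity; the bracket $[\cdot,\cdot]_m$ on $\mathfrak{m}$ is bilinear and skew-symmetric by construction (it is the Lie bracket of $\h$ composed with a linear projection), so the only axiom whose failure we must exhibit is the Jacobi identity. First I would observe that Proposition \ref{PA.2} produces three explicit elements $X,Y,Z\in V\subset\mathfrak{m}$ for which the Jacobiator
	\[
		[[Y,Z]_m,X]_m+[[X,Y]_m,Z]_m+[[Z,X]_m,Y]_m
	\]
is nonzero. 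Hence the Jacobi identity fails on $\mathfrak{m}$, and therefore $(\mathfrak{m},[\cdot,\cdot]_m)$ cannot be a Lie algebra.

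That is essentially the whole argument, so there is no real obstacle here — the content lies entirely in Proposition \ref{PA.2}, and the corollary is a one-line formal deduction. The only point worth a sentence of care is to make clear that $[\cdot,\cdot]_m$ \emph{is} a well-defined bilinear skew-symmetric operation on all of $\mathfrak{m}$, so that the failure of Jacobi is genuinely an obstruction to being a Lie algebra rather than an artifact of the bracket being only partially defined; this is immediate from the definition given just before Proposition \ref{PA.2}, since orthogonal projection $\h=\mathfrak{k}\oplus\mathfrak{m}\to\mathfrak{m}$ is linear and the ambient bracket on $\h$ is bilinear and skew-symmetric. One may also remark, for context, that this is exactly the expected behaviour: the failure of Jacobi is measured by the $\mathfrak{k}$-component of $[[X,Y],Z]$ for $X,Y,Z\in\mathfrak{m}$, which reflects the curvature of the associated affine structure on $\mathfrak{k}\backslash\h$, so $(\mathfrak{m},[\cdot,\cdot]_m)$ is a genuinely non-associative algebra and must be analyzed as such in the sequel.

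\begin{proof}
By construction, $[\cdot,\cdot]_m$ is obtained from the Lie bracket of $\h$ followed by the linear orthogonal projection $\h=\mathfrak{k}\oplus\mathfrak{m}\to\mathfrak{m}$, so it is a bilinear, skew-symmetric operation on $\mathfrak{m}$. If $(\mathfrak{m},[\cdot,\cdot]_m)$ were a Lie algebra, the Jacobi identity would hold, and in particular
	\[
		[[Y,Z]_m,X]_m+[[X,Y]_m,Z]_m+[[Z,X]_m,Y]_m = 0
	\]
for all $X,Y,Z\in\mathfrak{m}$, hence for all $X,Y,Z\in V$. This contradicts Proposition \ref{PA.2}, which exhibits three elements of $V$ for which the left-hand side is nonzero. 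Therefore $(\mathfrak{m},[\cdot,\cdot]_m)$ is not a Lie algebra.
\end{proof}
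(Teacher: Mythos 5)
Your proof is correct and is exactly the deduction the paper intends: Corollary \ref{CA.1} is an immediate consequence of Proposition \ref{PA.2}, since the elements $X,Y,Z\in V$ produced there violate the Jacobi identity for the bilinear, skew-symmetric bracket $[\cdot,\cdot]_m$. The paper leaves this one-line argument implicit, so your write-up matches its approach precisely.
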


Observe that the $\g \oplus \mathfrak{k}$-homomorphism induced by the Lie bracket of $\h$
	\[ \Omega : \Lambda^2 V \rightarrow \g \oplus \mathfrak{k} \]
splits in two components $\Omega = \Omega_\g + \Omega_\mathfrak{k}$ when projected to the corresponding factor and both of these linear maps are homomorphisms when considered with the corresponding module structure, induced by $\g$ or $\mathfrak{k}$. For every $r,s \in \mathbb{R}$, denote $\Omega_{r,s} = r\ \Omega_\g + s\ \Omega_\mathfrak{k}$, and so, we can define the non-associative algebra $\h_{r,s}$ given by the $\g \oplus \mathfrak{k}$-module structure $\h_{r,s} = \g \oplus \mathfrak{k} \oplus V$, together with the linear map $\Omega_{r,s}$, i.e.
	\[ [x,y] := \Omega_{r,s}(x \wedge y), \qquad \forall \ x,y \in V \subset \h_{r,s}. \]

\begin{lema}\label{LA.1}
The non-associative algebra $\h_{r,s}$ is a Lie algebra if and only if $r = s$, moreover if $r > 0$, then $\h_{r,r}$ is a Lie algebra isomorphic to $\h$. If $\h \cong \mathfrak{sp}(k+1,l)$ then $\h_{-1,-1} \cong \mathfrak{sp}(k,l+1)$ and conversely.

\end{lema}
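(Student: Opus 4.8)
The plan is to verify the Jacobi identity for $\h_{r,s}$ by going through the cases of a triple of arguments according to how many of them lie in $V$, using that the bracket of $\h_{r,s}$ differs from that of $\h=\h_{1,1}$ only on $V\times V$. First I would dispose of every triple having at least one argument in $\g\oplus\mathfrak{k}$. Since $\Omega$ is a $\g\oplus\mathfrak{k}$-homomorphism, so are its components $\Omega_\g,\Omega_\mathfrak{k}$ and hence $\Omega_{r,s}=r\,\Omega_\g+s\,\Omega_\mathfrak{k}$, for every $r,s$; the Jacobi identity on such a triple then follows from the equivariance of the module actions of $\g$ and $\mathfrak{k}$ on $V$ and of $\Omega_{r,s}$, so it holds for all $r,s$. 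Concretely, for a triple $g,v,w$ with $g\in\g$ and $v,w\in V$ one uses $[g,\Omega_{r,s}(v\wedge w)]=\Omega_{r,s}\big(g\cdot(v\wedge w)\big)=\Omega_{r,s}\big([g,v]\wedge w+v\wedge[g,w]\big)$ and the three Jacobiator terms cancel; the $\mathfrak{k}$-case and the triples with two or three arguments in $\g\oplus\mathfrak{k}$ are even easier, since $\g,\mathfrak{k}$ are Lie algebras and $[\g,\mathfrak{k}]=0$.

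This reduces everything to a triple $v_1,v_2,v_3\in V$. Expanding with $\Omega_{r,s}=r\,\Omega_\g+s\,\Omega_\mathfrak{k}$ and using that for $\xi\in\g\oplus\mathfrak{k}$ the bracket $[\xi,v]_{r,s}$ is the module action $\xi\cdot v$, the cyclic Jacobiator becomes $-r\,A-s\,B$, where $A=\sum_{\mathrm{cyc}}[\Omega_\g(v_2\wedge v_3),v_1]$ and $B=\sum_{\mathrm{cyc}}[\Omega_\mathfrak{k}(v_2\wedge v_3),v_1]$ are computed with the module actions on $V$. Since $\h=\h_{1,1}$ is a Lie algebra we have $A+B=0$, so the Jacobiator of $\h_{r,s}$ on $V^3$ equals $(s-r)A$. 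The key observation is that $A$, as an alternating trilinear map $\Lambda^3 V\to V$, is exactly the restriction to $V$ of the Jacobiator of the bracket $[\cdot,\cdot]_m$ on $\m=\g\oplus V$ (indeed $[v,w]_m=\Omega_\g(v\wedge w)$ and $[\,\Omega_\g(v\wedge w),v']_m=\Omega_\g(v\wedge w)\cdot v'$). By Proposition \ref{PA.2} (equivalently, Corollary \ref{CA.1}) this map is not identically zero, so $\h_{r,s}$ satisfies the Jacobi identity if and only if $s=r$; together with the skew-symmetry of the bracket (built into $\Omega_{r,s}$ and the module actions), this gives the first assertion, and in particular $\h_{r,r}$ is a Lie algebra for every $r$.

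For the isomorphism statements I would argue by rescaling $V$. When $r>0$, the linear map $\phi\colon\h_{r,r}\to\h$ that is the identity on $\g\oplus\mathfrak{k}$ and multiplication by $\sqrt r$ on $V$ is a Lie algebra homomorphism: every bracket relation is preserved since $\phi$ acts as a scalar on each of the two summands $\g\oplus\mathfrak{k}$ and $V$ while the brackets are bilinear, the only surviving constraint being $(\sqrt r)^2=r$, which comes from $[\phi v,\phi w]_\h=r\,\Omega(v\wedge w)=\phi\big([v,w]_{r,r}\big)$; as $\phi$ is invertible this gives $\h_{r,r}\cong\h$. Finally, if $\h\cong\mathfrak{sp}(k+1,l)$, then by Proposition \ref{PA.1} the Lie algebra $\mathfrak{sp}(k,l+1)$ has the very same $\g\oplus\mathfrak{k}$-module structure on $\g\oplus\mathfrak{k}\oplus V$ and bracket $\Omega'=-\Omega$ on $V$; since $\Omega_{-1,-1}=-\Omega_\g-\Omega_\mathfrak{k}=-\Omega$, the Lie algebras $\h_{-1,-1}$ and $\mathfrak{sp}(k,l+1)$ have identical structure constants, so the identity map realizes the isomorphism $\h_{-1,-1}\cong\mathfrak{sp}(k,l+1)$, and exchanging the roles of the two symplectic algebras yields the converse. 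I expect the only genuinely non-formal input to be Proposition \ref{PA.2}, entering through the identification of $A$ with the $[\cdot,\cdot]_m$-Jacobiator; everything else is bookkeeping with equivariant maps and rescalings.
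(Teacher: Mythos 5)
Your proposal is correct and follows essentially the same route as the paper: reduce the Jacobi identity to triples in $V$ via equivariance of $\Omega_{r,s}$, use the Jacobi identity of $\h_{1,1}$ to write the Jacobiator on $V^3$ as a multiple of $(r-s)$ times the $\g$-part, identify that part with the Jacobiator of $[\cdot,\cdot]_\m$ so Proposition \ref{PA.2} gives non-vanishing, then rescale $V$ by $\sqrt{r}$ for $r>0$ and invoke $\Omega'=-\Omega$ from Proposition \ref{PA.1} for the $\h_{-1,-1}$ case. The only difference is that you spell out the reduction step (triples meeting $\g\oplus\mathfrak{k}$) which the paper leaves implicit.
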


\begin{proof}
Take $X,Y,Z \in V$, we use the following notation
	\[
		\Omega_{r,s}(X,Y,Z) := \Omega_{r,s}(X,Y)Z + \Omega_{r,s}(Z,X)Y + \Omega_{r,s}(Y,Z)X,
			\]
so that the only condition left for $\h_{r,s}$ to be a Lie algebra is that $\Omega_{r,s}(X,Y,Z) = 0$ for every $X,Y,Z \in V$. Observe that $\Omega_\g = \Omega_{1,0}$, $\Omega_\mathfrak{k} = \Omega_{0,1}$ and $\h_{1,1}$ is the Lie algebra $\h$, so by Jacobi identity in $\h$ we have that
	\[
		\Omega_\g(X,Y,Z) = - \Omega_\mathfrak{k}(X,Y,Z), \qquad \forall \ X,Y,Z \in V;
			\]
thus $\Omega_{r,s}(X,Y,Z) = (r-s) \Omega_\g(X,Y,Z)$. Observe that $\Omega_\g(x,y) = [x,y]_\m$ so that by Proposition \ref{PA.2}, there exist $X,Y,Z \in V$ such that $\Omega_\g(X,Y,Z) \neq 0$, so $\Omega_{r,s}$ is identically zero only when $r = s$. For the second part consider the linear map
	\[
		\Phi : \g \oplus \mathfrak{k} \oplus V \rightarrow \g \oplus \mathfrak{k} \oplus V
			\]
defined by $\Phi(X + v) = X + \sqrt{r} v$, for $X \in \g \oplus \mathfrak{k}$ and $v \in V$, then $\Phi$ is an isomorphism of Lie algebras between $\h_{r,r}$ and $\h$ and the result follows from the last part of Proposition \ref{PA.1}.
\end{proof}

\begin{lema}\label{LA.2}
Suppose $(\mathcal{H}, \g \oplus \mathfrak{k})$ is a symmetric pair with $\mathcal{H}$ a simple Lie algebra and the decomposition of $\g \oplus \mathfrak{k}$-modules is given by
	\[ \mathcal{H} = \g \oplus \mathfrak{k} \oplus V, \]
where $V \cong \mathbb{H}^{k,l}$ as a $\g \oplus \mathfrak{k}$-module, then $\mathcal{H}$ is isomorphic to $\h$ either $\mathfrak{sp}(k+1,l)$ or $\mathfrak{sp}(k,l+1)$, and such isomorphism is an isomorphism of symmetric pairs $(\mathcal{H}, \g \oplus \mathfrak{k}) \cong (\h, \g \oplus \mathfrak{k})$, where the latter is given by the inclusions (\ref{inclusion1}) and (\ref{inclusion2}).
\end{lema}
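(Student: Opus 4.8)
The plan is to use the principle recalled in Remark \ref{RA.1}. Since $(\mathcal{H},\g\oplus\mathfrak{k})$ is a symmetric pair, the bracket of $\mathcal{H}$ carries $\Lambda^2 V$ into $\g\oplus\mathfrak{k}$, and the Lie algebra $\mathcal{H}$ is completely determined by its $\g\oplus\mathfrak{k}$-module structure — which is prescribed by hypothesis, namely the adjoint module on $\g\oplus\mathfrak{k}\cong\mathfrak{sp}(k,l)\oplus\mathfrak{sp}(1)$ and the module $\mathbb{H}^{k,l}$ on $V$ — together with a single $\g\oplus\mathfrak{k}$-homomorphism $\Psi:\Lambda^2 V\to\g\oplus\mathfrak{k}$ realized by the bracket. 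So the first and main step is to compute $\mathrm{Hom}_{\g\oplus\mathfrak{k}}(\Lambda^2 V,\g\oplus\mathfrak{k})$ and to identify it with the two-parameter family $\{\Omega_{r,s}\}$ introduced before Lemma \ref{LA.1}.

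To compute that space I would complexify. From the infinitesimal action (\ref{action}), $V$ is the module given by left multiplication of $\g$ and right multiplication of $\mathfrak{k}$ on $\mathbb{H}^{k,l}$, hence $V\otimes\C\cong V(\omega_1)\boxtimes\C^2$ as a module over $\mathfrak{sp}(2n,\C)\oplus\mathfrak{sp}(2,\C)$, where $\C^2$ is the standard $\mathfrak{sp}(2,\C)$-module. Then
\[
\Lambda^2\big(V(\omega_1)\boxtimes\C^2\big)\cong\big(S^2 V(\omega_1)\boxtimes\Lambda^2\C^2\big)\oplus\big(\Lambda^2 V(\omega_1)\boxtimes S^2\C^2\big),
\]
and substituting the decompositions from Remark \ref{examples} — namely $S^2 V(\omega_1)\cong V(2\omega_1)\cong\mathfrak{sp}(2n,\C)$ and $\Lambda^2 V(\omega_1)\cong V(\omega_2)\oplus\C$ — together with $\Lambda^2\C^2\cong\C$ trivial and $S^2\C^2\cong\mathfrak{sp}(2,\C)$, yields
\[
\Lambda^2 V\otimes\C\cong\mathfrak{sp}(2n,\C)\oplus\big(V(\omega_2)\boxtimes\mathfrak{sp}(2,\C)\big)\oplus\mathfrak{sp}(2,\C),
\]
each summand occurring with multiplicity one, with $\mathfrak{sp}(2n,\C)$ and $\mathfrak{sp}(2,\C)$ carrying the trivial action of the respective other factor. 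Since $(\g\oplus\mathfrak{k})\otimes\C\cong\mathfrak{sp}(2n,\C)\oplus\mathfrak{sp}(2,\C)$ with the same conventions, Schur's Lemma over $\C$ gives a two-dimensional Hom space, hence $\dim_\R\mathrm{Hom}_{\g\oplus\mathfrak{k}}(\Lambda^2 V,\g\oplus\mathfrak{k})=2$.

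Next I would observe that $\{\Omega_{r,s}=r\,\Omega_\g+s\,\Omega_\mathfrak{k}\}$ fills this space: both $\Omega_\g$ and $\Omega_\mathfrak{k}$ belong to it, they have images in the complementary submodules $\g$ and $\mathfrak{k}$, and they are nonzero — $\Omega_\g\neq0$ by the discussion following Lemma \ref{LA.1} (via Proposition \ref{PA.2}, or Corollary \ref{CA.1}), while $\Omega_\mathfrak{k}\neq0$ because if it vanished then $[V,V]\subseteq\g$ inside $\h$ would make $\g\oplus V$ a nonzero proper ideal of the simple Lie algebra $\h$. So $\Omega_\g,\Omega_\mathfrak{k}$ are linearly independent and span. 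Now choose a $\g\oplus\mathfrak{k}$-module isomorphism $\mathcal{H}\xrightarrow{\ \sim\ }\g\oplus\mathfrak{k}\oplus V$ that is the identity on $\g\oplus\mathfrak{k}$ (such an isomorphism exists by the prescribed module decomposition); transporting the bracket across it produces a Lie algebra whose $\g\oplus\mathfrak{k}$-part is unchanged, whose mixed brackets are the module action, and whose $V\times V$-bracket is some $\Psi\in\mathrm{Hom}_{\g\oplus\mathfrak{k}}(\Lambda^2 V,\g\oplus\mathfrak{k})=\{\Omega_{r,s}\}$. Thus $\mathcal{H}\cong\h_{r,s}$ for some $r,s\in\R$.

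The remaining steps are formal consequences of Lemma \ref{LA.1}: since $\mathcal{H}$ is a Lie algebra, $r=s$; since $\mathcal{H}$ is simple and $V\neq0$, $r\neq0$ (otherwise $V$ is an abelian ideal); and rescaling $V$ by $\sqrt{|r|}$ — the map $\Phi$ from the proof of Lemma \ref{LA.1} — identifies $\h_{r,r}$ with $\h_{1,1}=\h$ when $r>0$ and with $\h_{-1,-1}$ when $r<0$, so $\mathcal{H}$ is isomorphic to $\mathfrak{sp}(k,l+1)$ or to $\mathfrak{sp}(k+1,l)$. Every isomorphism used along the way is the identity on $\g\oplus\mathfrak{k}$, so their composite is an isomorphism of symmetric pairs $(\mathcal{H},\g\oplus\mathfrak{k})\cong(\h,\g\oplus\mathfrak{k})$ with the right-hand side carried by the inclusions (\ref{inclusion1}) and (\ref{inclusion2}). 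The one genuinely technical point is the Hom-space computation: one has to handle the external tensor product of the two simple factors $\mathfrak{sp}(2n,\C)$ and $\mathfrak{sp}(2,\C)$ correctly and confirm that every relevant multiplicity equals one; once $\dim\mathrm{Hom}=2$ is established, the rest reduces to Lemma \ref{LA.1}.
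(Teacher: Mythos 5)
Your proposal is correct and follows essentially the same route as the paper: transport the bracket to a $\g\oplus\mathfrak{k}$-homomorphism $\Lambda^2 V\to\g\oplus\mathfrak{k}$, identify it with some $\Omega_{r,s}$ by a multiplicity-one/Schur argument, and then conclude via Lemma \ref{LA.1}. The only differences are cosmetic: the paper obtains the needed decomposition of $\Lambda^2 V$ by citing Corollary \ref{C1.4} instead of recomputing it through $V\otimes\C\cong V(\omega_1)\boxtimes\C^2$, and you spell out the points the paper leaves implicit ($\Omega_\mathfrak{k}\neq 0$ and $r\neq 0$ from simplicity), which is a welcome precision rather than a divergence.
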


\begin{proof}
Consider the isomorphism of $\g \oplus \mathfrak{k}$-modules
	\[	\mathcal{H} = \g \oplus \mathfrak{k} \oplus V \rightarrow \mathfrak{sp}(k,l+1) = \g \oplus \mathfrak{k} \oplus V, \]
then the Lie bracket of $\mathcal{H}$ defines a $\g \oplus \mathfrak{k}$-homomorphism $\Xi : \Lambda^2 V \rightarrow \g \oplus \mathfrak{k}$, and by Corollary \ref{C1.4} we have a decomposition of $\g \oplus \mathfrak{k}$-modules 
	\[ \Lambda^2 V \cong \g \oplus \mathfrak{k} \oplus V_0 \oplus V_1 \oplus V_2, \]
such that $V_j$ is not isomorphic to $\g$ nor $\mathfrak{k}$, so that by Lemma \ref{Schur} we have that $\Xi = \Omega_{r,s}$ for some $r,s \in \mathbb{R}$, but by the previous Lemma, $r = s$ and so $\mathcal{H}$ is isomorphic to either $\mathfrak{sp}(k,l+1)$ or $\mathfrak{sp}(k,l+1)$, where we achived such isomorphism preserving the $\g \oplus \mathfrak{k}$-decomposition.
\end{proof}

\begin{teo}\label{TA.1}
If $G \times K$ is a connected subgroup of $Sp(p,q)$, where $G$ is isomorphic to either $Sp(p-1,q)$ or $Sp(p,q-1)$ and $K \cong Sp(1)$, then there exists 
	\[ \phi : Sp(p,q) \rightarrow Sp(p,q) \]
an automorphism such that $\phi(G \times K) \subset Sp(p,q)$ is the block-diagonal embedding as in (\ref{inclusion1}) and (\ref{inclusion2}).
\end{teo}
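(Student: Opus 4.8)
The plan is to pass to Lie algebras and reduce everything to the structure results already established. Let $\h = \mathfrak{sp}(p,q)$, $\g = \mathrm{Lie}(G)$ and $\mathfrak{k} = \mathrm{Lie}(K) \cong \mathfrak{sp}(1)$. Since $G$ and $K$ commute inside $Sp(p,q)$, the subalgebras $\g$ and $\mathfrak{k}$ commute in $\h$, and the isotropy data of $K$ determines an involution: conjugation by a generic element of $K \cong Sp(1)$, or more precisely the Cartan-type involution attached to the $Sp(1)$-factor, makes $(\h, \g \oplus \mathfrak{k})$ into a symmetric pair. Concretely, one should argue that $\h$ decomposes under $\g \oplus \mathfrak{k}$ as $\h = \g \oplus \mathfrak{k} \oplus V$ with $[V,V] \subseteq \g \oplus \mathfrak{k}$; the $(-1)$-eigenspace $V$ of the relevant involution is a real $\g \oplus \mathfrak{k}$-module, and a dimension count (using $\dim \mathfrak{sp}(p,q) - \dim\mathfrak{sp}(p-1,q) - \dim\mathfrak{sp}(1)$, which equals $4(p-1+q) = 4n$ with $n = p-1+q$ or the symmetric count) places $V$ in the range covered by Corollary \ref{C1.4}.

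First I would verify that $V$ is nontrivial as a $\g$-module: if $\g$ acted trivially on $V$, then $\g$ would be an ideal of $\h$ complemented by $\mathfrak{k} \oplus V$, contradicting simplicity of $\mathfrak{sp}(p,q)$ (here $p,q$ are such that the ambient algebra is simple; the exceptional low-rank identifications are harmless since the statement is about the group $Sp(p,q)$). Then, because $\dim_\mathbb{R} V \leq 4(k+l)$ with $k+l = n \geq 3$ in the cases of interest, Corollary \ref{C1.4} applies and forces $V$ to be the irreducible module of highest weight $\omega_1$, i.e. $V \cong \mathbb{C}^{2k,2l}_\mathbb{R} \cong \mathbb{H}^{k,l}$ as a $\g$-module. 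It remains to pin down the $\mathfrak{k} = \mathfrak{sp}(1)$-action on $V$: the commuting $\mathfrak{sp}(1)$ sits inside $\mathfrak{so}(V,B)$ in the decomposition $\mathfrak{so}(V,B) = \g \oplus \mathfrak{sp}(1) \oplus V_0 \oplus V_1 \oplus V_2$ of Corollary \ref{C1.4}, and since $\mathfrak{k}$ commutes with $\g$ and is a copy of $\mathfrak{sp}(1)$ inside the centralizer of $\g$ in $\mathfrak{so}(V,B)$ — which is exactly that distinguished $\mathfrak{sp}(1)$ — the $\g \oplus \mathfrak{k}$-module $V$ is forced to be $\mathbb{H}^{k,l}$ with its standard left-$\g$, right-$\mathfrak{k}$ structure as in (\ref{action}). (One should check the centralizer of $\g$ in $\mathfrak{so}(V,B)$ is precisely $\mathfrak{sp}(1)$, which follows from Schur — $\mathrm{Hom}_\g(V) \cong \mathbb{H}$ — together with the bracket relations in Lemma \ref{L1.6}.)

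With $(\h, \g \oplus \mathfrak{k})$ now identified as a symmetric pair whose $\g \oplus \mathfrak{k}$-module decomposition is $\g \oplus \mathfrak{k} \oplus \mathbb{H}^{k,l}$, Lemma \ref{LA.2} applies verbatim and produces a Lie algebra isomorphism $\h \xrightarrow{\sim} \mathfrak{sp}(k+1,l)$ or $\mathfrak{sp}(k,l+1)$ carrying $\g \oplus \mathfrak{k}$ to the block-diagonal $\mathfrak{sp}(k,l) \oplus \mathfrak{sp}(1)$ of (\ref{inclusion1})–(\ref{inclusion2}). Finally I would upgrade this to the group level: an automorphism of $\mathfrak{sp}(p,q)$ integrates to an automorphism $\phi$ of $Sp(p,q)$ because $Sp(p,q)$ is connected and simply connected (Proposition \ref{P1.7}), and $\phi$ sends the connected subgroup $G \times K$ onto the connected subgroup with Lie algebra the block-diagonal $\g \oplus \mathfrak{k}$, namely the block-diagonal embedding. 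The main obstacle is the middle step — confirming that the abstract commuting pair $(\g, \mathfrak{k})$ inside $\h$ really does yield an involution with $(-1)$-eigenspace of the expected dimension and with the correct joint $\g \oplus \mathfrak{k}$-module structure, rather than merely the correct $\g$-module structure; everything downstream is a citation of Lemma \ref{LA.2} and the simple-connectivity of $Sp(p,q)$.
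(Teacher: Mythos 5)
Your proposal is correct and takes essentially the same route as the paper: decompose $\h = \g \oplus \mathfrak{k} \oplus V$ by complete reducibility, rule out trivial actions using simplicity of $\h$, combine the dimension bound with Corollaries \ref{C1.3}/\ref{C1.4} and the identification $Hom_\g(V) \cap \mathfrak{so}(V,B) \cong \mathfrak{sp}(1)$ to see that $V \cong \mathbb{H}^{k,l}$ as a $\g \oplus \mathfrak{k}$-module and that $(\h, \g \oplus \mathfrak{k})$ is a symmetric pair, then quote Lemma \ref{LA.2} and integrate via simple connectivity of $Sp(p,q)$. The only (minor) divergences are that the paper uses no a priori involution (your ``conjugation by a generic element of $K$'' aside is unnecessary and not actually an involution) and that it explicitly notes $\mathfrak{k}$ cannot act trivially on $V$ (otherwise $\mathfrak{k}$ would be an ideal), the small step your sketch leaves implicit when asserting that $ad(\mathfrak{k})$ is a copy of $\mathfrak{sp}(1)$ inside the centralizer.
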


\begin{proof}
Let $\g \times \mathfrak{k} \subset \h$ be the Lie algebras corresponding to $G \times K \subset H$, with $H = Sp(p,q)$. As $\g \times \mathfrak{k}$ is a semisimple Lie algebra, then complete reducibility of real representations implies that there is a subspace $\mathfrak{p} \subset \h$ that is $\g \times \mathfrak{k}$-invariant under the adjoint action and $\h = \g \oplus \mathfrak{k} \oplus \mathfrak{p}$. Observe that $\mathfrak{l} = \mathfrak{k} \oplus \mathfrak{p}$ is a $\g$-submodule so that if it is a trivial module, then
	\[ [\mathfrak{l} , \mathfrak{l} ] \subset \mathfrak{l}, \]
and then $\mathfrak{l}$ is a non-trivial ideal of $\h$, but this is impossible since $\h$ is a simple Lie algebra. Using the restriction of dimensions and Corollary \ref{C1.3}, this implies that $\mathfrak{p} \cong \mathbb{H}^{k,l}$ as $\g$-modules, in particular it is irreducible. Now $\mathfrak{k} \subset Z_\h(\g)$ so that 
	\[ ad(\mathfrak{k}) \subset Hom_\g (\mathfrak{p}) \cap \mathfrak{so}(\mathfrak{p}, B) \cong \mathfrak{sp}(1), \]
where $B$ is the Killing form of $\h$. As before, if $\mathfrak{p}$ is a trivial $\mathfrak{k}$-module, then $\mathfrak{k}$ is an ideal of $\h$ but this is again impossible, so that we have
	\[ ad(\mathfrak{k}) = Hom_\g (\mathfrak{p}) \cap \mathfrak{so}(\mathfrak{p}, B) \cong \mathfrak{sp}(1) \]
and thus $\mathfrak{p} \cong \mathbb{H}^{k,l}$ as $\g \times \mathfrak{k}$-modules and $(\h, \g \times \mathfrak{k})$ is a symmetric pair. The result then follows from Lemma \ref{LA.2}. 
\end{proof}

\section{The homogeneous manifolds $Sp(1) \bs Sp(p,q)$}\label{Stiefel}

In all this section we fix the notation so that $H = Sp(p,q)$, $K = Sp(1)$ and $G$ is either $Sp(p-1,q)$ or $Sp(p,q-1)$; denote also by $\h$, $\mathfrak{k}$ and $\g$ their corresponding Lie algebras. Observe that the bilinear form given by $B(x,y) = \textrm{Re tr}(XY)$ is non-degenerated in $\h$ because is a multiple of the Killing form, moreover there is a decomposition $\h = \g \oplus \mathfrak{k} \oplus \mathfrak{p}$ orthogonal with respect to $B$ so that $\mathfrak{p} \cong \mathbb{C}^{2k,2l}$ as a $\g \oplus \mathfrak{k}$ with respect to the adjoint representation.

\subsection{Proof of Theorem \ref{C3.3}}
It is enough to prove the Theorem for $\varphi_0 : G \times K \rightarrow H$ to be a block-diagonal homomorphism as in (\ref{inclusion1}) and (\ref{inclusion2}) respectively. Theorem \ref{TA.1} tells us that there exists an automorphism of $H$ that sends $\varphi(G \times K)$ into $\varphi_0(G \times K)$, so that it induces the desired analytic diffeomorphism. Now the fact that we have $R(H) \subset Iso(\varphi(K) \bs H, \overline{h})$ implies that $\overline{h}$ is a right invariant pseudo-Riemannian metric that is thus induced by a bilinear form $D$ in $\g \oplus \mathfrak{p}$ that is $Ad(K)$-invariant, and the fact that we also have $L(G) \subset Iso(\varphi(K) \bs H, \overline{h})$ implies that $D$ is also $Ad(G)$-invariant that splits as $D = D_1 + D_2$, where $D_1$ and $D_2$ are the restrictions to $\g$ and $\mathfrak{p}$ respectively. But $\g$ and $\mathfrak{p}$ are non-isomorphic irreducible $Ad(G)$-modules, so that Lemma \ref{R1.4} implies that $D_1 = a B_1$ and $D_2 = b B_2$ for $a,b \in \mathbb{R}$ non-zero where $B_1$ and $B_2$ are the restrictions of $B$ to $\g$ and $\mathfrak{p}$ respectively. This tells us that $\overline{h}$ is a rescaling of $\overline{g}$ over the $G$-orbits and its orthogonal complements and the result follows.

\subsection{Affine structure and isometries of $Sp(1) \backslash Sp(p,q)$}

Theorem \ref{C3.3} tells us that we may obviate the embedding $G \times K \hookrightarrow H$ and thus consider just the homogeneous space $K \backslash H$ with the left $G$-action given by left multiplications. Fix the pseudo-Riemannian metrics in $H$ and $K \bs H$ induced from $B$, recall that an element $X \in \h$ generates two distinct killing fields in $H$ induced by $B$ via left and right multiplication of the group
	\[	X^+_{h} = \frac{d}{dt}_{|_{t=0}} h \cdot exp(tX), \qquad X^*_{h} = \frac{d}{dt}_{|_{t=0}} exp(tX) \cdot h, \qquad \forall \ h \in H.	\]
In the homogeneous space $K \bs H$ only one of this killing fields is well defined, namely the one defined by right multiplications $X^+_{K h} = \frac{d}{dt}_{|_{t=0}} K h \cdot exp(tX)$.

\begin{lema}\label{L3.1}
If $\pi : H \rightarrow K \bs H$ denotes the natural projection, the linear map $d \pi_e$ gives the identification $\m \cong T_{eK} ( K \bs H )$ and under this identification 
	\[ 2 \left( \nabla_{X^+} Y^+ \right)_{Ke} = [X,Y]_\m, \qquad  \forall \ X,Y \in \m, \]
 where $\nabla$ is the Levi-Civita connection in $K \bs H$ induced by the pseudo-Riemannian submersion and $[X,Y]_\m$ is the orthogonal projection of $[X,Y]$ to $\m$.
\end{lema}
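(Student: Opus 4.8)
The plan is to set up everything on the total space $H$, where the Levi-Civita connection of a bi-invariant (or here, $B$-induced) pseudo-Riemannian metric is completely explicit, and then push the computation down through the pseudo-Riemannian submersion $\pi : H \to K\bs H$ using O'Neill's formulas. First I would record the identification: since $K$ acts on $H$ by left multiplications and $\h = \mathfrak{k}\oplus\m$ is a $B$-orthogonal, $\mathrm{Ad}(K)$-stable decomposition, the vertical distribution of $\pi$ at $e$ is exactly $\mathfrak{k}^+_e$ and the horizontal distribution is $\m^+_e$; hence $d\pi_e$ restricted to $\m^+_e$ is a linear isomorphism onto $T_{eK}(K\bs H)$, giving $\m\cong T_{eK}(K\bs H)$. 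Under this identification, for $X\in\m$ the Killing field $X^+$ on $K\bs H$ (generated by right multiplication, which commutes with the left $K$-action and therefore descends) is $\pi$-related to the right-invariant... — more precisely to the field $h\mapsto \tfrac{d}{dt}|_0\, h\exp(tX)$ on $H$, which is the \emph{left-invariant} vector field associated to $X$ in the usual convention where left-invariant fields integrate to right translations. I will call this field $\widehat X$ on $H$; then $d\pi(\widehat X) = X^+$, and at $e$ we have $\widehat X_e = X$.

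Next I would invoke the standard formula for the Levi-Civita connection of the metric on $H$ induced by $B$. Because $B$ is $\mathrm{Ad}$-invariant (a multiple of the Killing form), for left-invariant fields $\widehat X,\widehat Y$ one has $\nabla^H_{\widehat X}\widehat Y = \tfrac12[\widehat X,\widehat Y] = \tfrac12\widehat{[X,Y]}$; in particular $(\nabla^H_{\widehat X}\widehat Y)_e = \tfrac12[X,Y]$. Now apply O'Neill's submersion formula: for horizontal fields (and $\widehat X,\widehat Y$ are horizontal at $e$ since $X,Y\in\m$), the connection downstairs satisfies $\nabla_{X^+}Y^+ = \pi_*\big(\mathcal{H}\,\nabla^H_{\widehat X}\widehat Y\big)$ at $eK$, where $\mathcal H$ denotes the horizontal projection. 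Thus $(\nabla_{X^+}Y^+)_{eK} = d\pi_e\big(\mathrm{pr}_\m \tfrac12[X,Y]\big) = \tfrac12\,d\pi_e([X,Y]_\m)$, and identifying via $d\pi_e$ this reads $2(\nabla_{X^+}Y^+)_{eK} = [X,Y]_\m$, as claimed.

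The main obstacle, and the step I would write out most carefully, is the bookkeeping of conventions: which left-/right-invariant field on $H$ the downstairs Killing field $X^+$ actually lifts to, and confirming that this lift is genuinely $B$-horizontal at $e$ and $\pi$-related to $X^+$ along the whole fiber $K$ (so that O'Neill applies at $eK$ rather than just giving an equality of a single tangent vector). This requires using $\mathrm{Ad}(K)$-invariance of the splitting $\h=\mathfrak{k}\oplus\m$ to see that $\widehat X$ stays horizontal along $K\cdot e$. A secondary point to check is the factor of $2$: it comes entirely from $\nabla^H_{\widehat X}\widehat Y=\tfrac12[\widehat X,\widehat Y]$ for the metric induced by an $\mathrm{Ad}$-invariant form, and from the fact that O'Neill's formula introduces no extra constant on the purely horizontal part. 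Everything else — the O'Neill formula itself, and the connection formula on $H$ — I would simply cite (e.g.\ from standard references on pseudo-Riemannian submersions and on bi-invariant metrics), since they hold verbatim in the pseudo-Riemannian setting as $B$ is non-degenerate on $\h$.
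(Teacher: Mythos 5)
Your proposal is correct and follows essentially the same route as the paper: identify $\m$ with the horizontal space at $e$, use the bi-invariant formula $2(D_{X^+}Y^+)_e=[X,Y]$ on $H$ for the metric induced by $B$, and then descend through the pseudo-Riemannian submersion via O'Neill's formula. The one step you flag but leave unexecuted is precisely what the paper writes out: it replaces $X^+$ by the genuine horizontal lift $\overline{X}=X^+-\sum_j \overline{g}(X^+,X_j^*)X_j^*$ and checks, using that the coefficients $\overline{g}(Y^+,X_j^*)$ vanish at $e$ (as $\m\perp_B\mathfrak{k}$), that the resulting correction terms at $e$ are vertical; note that horizontality of the left-invariant fields along the fiber does not by itself let you quote O'Neill (which is stated for basic fields), but this short comparison of $\widehat{Y}$ with its horizontal part at $e$ closes the argument exactly as in the paper.
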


\begin{proof}
Take $X \in \m$ and $\{X_j\}$ a B-orthonormal basis of $\mathfrak{k}$, then the vector field defined by $\overline{X} = X^+ - \sum_j \overline{g}( X^+ , X_j^* ) X_j^*$ is orthogonal to the $K$-orbits in $H$ and its projection is precisely $X^+$ in $K \bs H$. Moreover, if $D$ denotes the Levi-Civita connection in $H$ and $Y \in \m$, then 
	\[ D_{\overline{X}} \overline{Y} = D_{X^+} Y^+ + \sum_j \left( \overline{g}( X^+ , X_j^* ) Z_j^1 + \overline{g}( Y^+ , X_j^* ) Z_j^2 + a_j X_j^* \right),  \]
for some smooth vector fields $Z_j^s$ and functions $a_j$. Observe that 
	\[ \langle Z^+ , X_j^* \rangle_{e} = B(Z,X_j) = 0, \qquad \forall \ Z \in \m \]
and $2 \left( D_{X^+} Y^+ \right)_e = [X,Y]$ because the metric is bi-invariant \cite[Corollary 11.10]{O}, so we get
	\[ 2 \left(D_{\overline{X}} \overline{Y} \right)_e = [X,Y]_\m + W\]
for some $W \in \mathfrak{k}$, the result now follows from O'Neill's formula for Levi-Civita connections under pseudo-Riemannian submersions \cite[Lemma 7.45]{O}.
\end{proof}

The bilinear form $[\cdot , \cdot ]_\m$ induces in $\m$ the structure of a non-associative algebra that has as its automorphism group
	\[ Aut(\m) = \{ T \in GL(\m) : T[X, Y]_\m = [TX, TY]_\m, \quad \forall \ X,Y \in \m \}, \]
that is an algebraic Lie group with Lie algebra
	\[	Der(\m) = \{ T \in GL(\m) : T[X, Y]_\m = [TX, Y]_\m + [X, TY]_\m, \ \forall \ X,Y \in \m \}. \]
Consider the isometry group of $K \bs H$ with the given pseudo-Riemannian structure and denote it simply by $Iso(K \bs H)$, consider also the isotropy subgroup of elements that fix the identity class
	\[ Iso(K \bs H, Ke) = \{ \varphi \in Iso(K \bs H) : \varphi(Ke) = Ke \},	\]
then we denote the isotropy representation as
	\[ \begin{array}{rcl}
		\lambda_e : Iso(K \bs H, Ke) & \rightarrow & GL(\m) \\
		\varphi & \mapsto & d\varphi_e.
	\end{array}\]

\begin{cor}\label{C3.1}
$\lambda_e$ is injective with closed image contained in $Aut(\m)$.
\end{cor}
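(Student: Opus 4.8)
The plan is to prove the two assertions about $\lambda_e$ separately, exploiting the standard fact that an isometry of a (connected) pseudo-Riemannian manifold is determined by its $1$-jet at a point.

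\textbf{Injectivity.} First I would recall that $K \bs H$ is connected and that a pseudo-Riemannian isometry fixing a point $p$ and with $d\varphi_p = \mathrm{id}$ must be the identity, since it then commutes with $\exp_p$ on a normal neighbourhood and a standard connectedness/open-closed argument propagates this everywhere (see \cite[Lemma 11.2 or the analogous statement]{O}). Hence if $\lambda_e(\varphi) = d\varphi_e = \mathrm{id}_\m$ then $\varphi = \mathrm{id}$, giving injectivity of $\lambda_e$.

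\textbf{Image lies in $Aut(\m)$.} This is where Lemma \ref{L3.1} does the work. If $\varphi \in Iso(K \bs H, Ke)$ and $T = d\varphi_e \in GL(\m)$, then $\varphi$ being an isometry intertwines the Levi-Civita connections, so it carries the bilinear "product" $(X,Y) \mapsto (\nabla_{X^+} Y^+)_{Ke}$ to the corresponding product built from the push-forward vector fields. More precisely, for $X, Y \in \m$ the field $\varphi_* (X^+)$ is a vector field on $K \bs H$ whose value at $Ke$ is $TX$, and since $\varphi$ is affine, $\varphi_*(\nabla_{X^+} Y^+) = \nabla_{\varphi_* X^+} \varphi_* Y^+$. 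Evaluating at $Ke$ and using that the value at a point of $\nabla_{U} V$ depends only on $U(Ke)$ and on the $1$-jet of $V$ there, I would argue the right-hand side equals $(\nabla_{(TX)^+}(TY)^+)_{Ke}$ up to contributions that vanish — the cleanest route is to note that $\varphi_* X^+$ and $(TX)^+$ agree to first order at $Ke$, or alternatively to conjugate by $\varphi$ the $G$-action and observe $\varphi_* X^+$ is again a Killing field of the $G$-type when $\varphi$ normalizes $R(H)$; but in fact one only needs the pointwise identity. Combining with Lemma \ref{L3.1}, which identifies $2(\nabla_{X^+} Y^+)_{Ke}$ with $[X,Y]_\m$, yields $T[X,Y]_\m = [TX, TY]_\m$, i.e. $T \in Aut(\m)$.

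\textbf{Closedness of the image.} Finally, the image $\lambda_e(Iso(K \bs H, Ke))$ is closed in $Aut(\m)$ (equivalently in $GL(\m)$): $Iso(K \bs H)$ is a Lie group acting properly on $K \bs H$, so the isotropy subgroup $Iso(K \bs H, Ke)$ is compact — wait, that is false in the pseudo-Riemannian setting, so instead I would argue directly that $\lambda_e$ is a continuous injective homomorphism of Lie groups which is proper: a sequence $\varphi_n$ with $d(\varphi_n)_e \to T$ has the property that the $\varphi_n$ converge in the compact-open $C^1$-topology on a normal neighbourhood (via $\exp_{Ke}$), hence by the jet-determinacy and the group structure converge to an isometry $\varphi$ with $d\varphi_e = T$; thus the image is closed. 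The main obstacle I anticipate is precisely making this last step rigorous without a properness/compactness crutch — one must use that isometries are determined by their $1$-jets together with the real-analyticity or completeness hypotheses implicitly available, so I would phrase closedness as: $\lambda_e$ identifies $Iso(K\bs H, Ke)$ with a closed subgroup of the algebraic group $Aut(\m)$ because both sides are the fixed-point sets of the same $1$-jet constraints, and a limit of isometries fixing $Ke$ that converges on $1$-jets converges as a map and is again an isometry fixing $Ke$.
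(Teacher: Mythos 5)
Your injectivity argument and your closedness argument follow the same route as the paper: an isometry of a connected pseudo-Riemannian manifold is determined by its value and derivative at one point (the paper simply cites \cite[Sec. I]{Ko}), and closedness of $\lambda_e(Iso(K\bs H,Ke))$ is the standard jet-convergence/normal-coordinate fact contained in that same reference; your observation that one cannot use compactness of the isotropy in this signature, and the $C^1$-convergence repair you sketch, are consistent with that.

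The problem is the middle step, exactly where you hesitate. From $\varphi_*(\nabla_{X^+}Y^+)=\nabla_{\varphi_*X^+}\varphi_*Y^+$ and Lemma \ref{L3.1} you get $T[X,Y]_\m = 2\left(\nabla_{\varphi_*X^+}\varphi_*Y^+\right)_{Ke}$, and to conclude $T\in Aut(\m)$ you must replace $\varphi_*Y^+$ by $(TY)^+$. Your fallback that ``one only needs the pointwise identity'' is false: $(\nabla_UV)_p$ is tensorial in $U$ but depends on the $1$-jet of $V$ at $p$. And the first-order agreement of $\varphi_*Y^+$ with $(TY)^+$ is not a formal consequence of $\varphi$ being an isometry fixing $Ke$: the difference $W=\varphi_*Y^+-(TY)^+$ is a Killing field vanishing at $Ke$, and such a field in general has nonzero covariant derivative there, contributing an unwanted term $2(\nabla_{TX}W)_{Ke}$. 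A concrete warning comes from the degenerate analogue $K=\{e\}$, i.e.\ a bi-invariant metric on a Lie group: inversion is an isometry fixing $e$ with differential $-\mathrm{id}$, which is \emph{not} an automorphism of the product $\tfrac12[\cdot,\cdot]$, precisely because the push-forward of a field $Y^+$ under inversion is (minus) the right-invariant field, which agrees with $(-Y)^+$ only at order zero. So the containment in $Aut(\m)$ requires some identification of $\varphi_*Y^+$ with $(TY)^+$ (your remark about $\varphi$ normalizing $R(H)$ would do it, but that cannot be assumed here — it is close to what Theorem \ref{T3.1} is proving), or an expression of $[\cdot,\cdot]_\m$ through data manifestly preserved by every isometry fixing $Ke$. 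The paper's own proof invokes exactly the two ingredients you use (Lemma \ref{L3.1} plus naturality of $\nabla$ under isometries) and leaves this identification implicit, so your outline does reproduce its strategy; but as you have written it, resting on the pointwise/tensoriality claim, the step does not go through.
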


\begin{proof}
The first part follows from the fact that an isometry of a connected pseudo-Riemannian manifold is completely determined by its value and derivative in a single point, see \cite[Sec. I]{Ko}. The second part follows from Lemma \ref{L3.1} and the fact that $\psi_*(\nabla_U V ) = \nabla_{\psi_*(U)} \psi_*(V)$ for every isometry $\psi \in Iso(K \bs H)$ and $U,V$ local vector fields in $K \bs H$.
\end{proof}

Denote by $\widetilde{ad}_x(y) = [x,y]_\m$ the adjoint representation in $\m$ so that if $\m$ is invariant under $ad_x$, then $\widetilde{ad}_x(y) = ad_x(y)$ and thus $\widetilde{ad}_x \in Der(\m)$, in particular we obtain that $\widetilde{ad}(\g \oplus \mathfrak{k}) \subset Der(\m)$.

\begin{lema}\label{L3.2}
If $x \in \mathfrak{p}$ is such that $\widetilde{ad}_x \in Der(\m)$, then $x = 0$.
\end{lema}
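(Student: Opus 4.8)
The plan is to exploit the decomposition $\m = \g \oplus \mathfrak{p}$ together with the fact that, via Proposition \ref{PA.1}, the bracket $[\cdot,\cdot]_\m$ restricted to $\mathfrak{p}$ is exactly $\Omega_\g$, i.e. the $\g$-component of the bracket $\Omega\colon \Lambda^2\mathfrak{p}\to \g\oplus\mathfrak{k}$, while $[\g,\mathfrak{p}]_\m \subset \mathfrak{p}$ and $[\g,\g]_\m=\g$ genuinely agree with the Lie bracket of $\h$. First I would note that the derivation condition $\widetilde{ad}_x\in Der(\m)$ for $x\in\mathfrak{p}$ is a system of $\g\oplus\mathfrak{k}$-equivariance constraints, because $\widetilde{ad}$ intertwines the $\g\oplus\mathfrak{k}$-action. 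Applying the derivation identity with both arguments in $\g$ forces $\widetilde{ad}_x$ to behave compatibly on $\g$; applying it with one argument in $\g$ and one in $\mathfrak{p}$; and with both in $\mathfrak{p}$, yields three separate equations. The key point is that $x\mapsto \widetilde{ad}_x$, as a map $\mathfrak{p}\to \mathfrak{gl}(\m)$, is a $\g\oplus\mathfrak{k}$-homomorphism, so by Schur (Lemma \ref{Schur}) its image meets $Der(\m)$ either trivially or the whole $\mathfrak{p}$-isotypic part embeds; I would rule out the latter by producing one explicit $x\in\mathfrak{p}$ with $\widetilde{ad}_x\notin Der(\m)$, which by irreducibility of $\mathfrak{p}$ and equivariance of $\widetilde{ad}$ suffices.

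The concrete computation I would carry out is the following: take $x\in\mathfrak{p}\cong \mathbb{H}^{k,l}$ and test the Jacobi-type identity
\[
\widetilde{ad}_x([Y,Z]_\m) = [\widetilde{ad}_x Y, Z]_\m + [Y,\widetilde{ad}_x Z]_\m
\]
on a pair $Y,Z\in\mathfrak{p}$. The left side is $[x,\Omega_\g(Y\wedge Z)]_\m = [x,\Omega_\g(Y\wedge Z)] \in \mathfrak{p}$ (a genuine $\h$-bracket, since $\g$ acts on $\mathfrak{p}$), while the right side is $\Omega_\g\big((\text{stuff in }\mathfrak{p})\wedge Z + Y\wedge(\text{stuff in }\mathfrak{p})\big)$. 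Now invoke Proposition \ref{PA.2}: there exist $X,Y,Z\in\mathfrak{p}=V$ with $\Omega_\g(X,Y,Z)=[[Y,Z]_m,X]_m+[[X,Y]_m,Z]_m+[[Z,X]_m,Y]_m\neq 0$. Rearranging, this says precisely that $\widetilde{ad}_X$ fails the derivation identity on the pair $(Y,Z)$ — equivalently, the failure of $(\m,[\cdot,\cdot]_\m)$ to be a Lie algebra (Corollary \ref{CA.1}) is detected already inside $\mathfrak{p}$. So there is at least one $x_0\in\mathfrak{p}$ with $\widetilde{ad}_{x_0}\notin Der(\m)$.

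To finish: the set $N=\{x\in\mathfrak{p}: \widetilde{ad}_x\in Der(\m)\}$ is a linear subspace of $\mathfrak{p}$, and it is $\g\oplus\mathfrak{k}$-invariant because for $g\in\g\oplus\mathfrak{k}$ one has $\widetilde{ad}_{[g,x]} = [\,\widetilde{ad}_g,\widetilde{ad}_x\,]$ in $\mathfrak{gl}(\m)$, a commutator of derivations when $\widetilde{ad}_x\in Der(\m)$ and $\widetilde{ad}_g\in Der(\m)$ (the latter since $\widetilde{ad}(\g\oplus\mathfrak{k})\subset Der(\m)$, as noted before the Lemma), hence again a derivation. But $\mathfrak{p}\cong\mathbb{H}^{k,l}$ is an irreducible $\g$-module (Corollary \ref{C1.3}/\ref{C1.4}), so $N=0$ or $N=\mathfrak{p}$; the previous paragraph excludes $N=\mathfrak{p}$, hence $N=0$, i.e. $x=0$. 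The main obstacle is bookkeeping: making sure that when I expand $[\widetilde{ad}_x Y,Z]_\m+[Y,\widetilde{ad}_x Z]_\m$ the only surviving obstruction is the $\Omega_\g$-Jacobiator of Proposition \ref{PA.2} and that no cross-terms with $\mathfrak{k}$ or $\g$ spoil the equivariance argument — but this is exactly controlled by the module decomposition $\h=\g\oplus\mathfrak{k}\oplus\mathfrak{p}$ being orthogonal and $\g\oplus\mathfrak{k}$-stable, so it reduces to the clean statement already proved in Proposition \ref{PA.2}.
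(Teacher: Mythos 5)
Your proposal is correct and follows essentially the same route as the paper: you define the subspace $N=\{x\in\mathfrak{p}:\widetilde{ad}_x\in Der(\m)\}$, show it is a $\g$-submodule via the identity $[\widetilde{ad}_g,\widetilde{ad}_x]=\widetilde{ad}_{[g,x]}$ together with $Der(\m)$ being a Lie subalgebra, and exclude $N=\mathfrak{p}$ by observing that the nonvanishing Jacobiator of Proposition \ref{PA.2} is exactly the failure of the derivation identity for some $\widetilde{ad}_Z$ with $Z\in\mathfrak{p}$, so irreducibility of $\mathfrak{p}$ gives $N=0$. This matches the paper's proof, with your Schur-type remarks in the first paragraph being an inessential reformulation of the same submodule argument.
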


\begin{proof}
Recall that $Der(\m)$ is a Lie subalgebra of $\mathfrak{gl}(\m)$, because the derivations of any algebra is a Lie algebra and a straightforward computation shows that 
	\begin{displaymath}
		[\delta , \widetilde{ad}_{x}] =  \widetilde{ad}_{(\delta x)}, \qquad \forall x \in \m, \ \delta \in Der(\m).
	\end{displaymath}
If we define define $\mathfrak{p}_0 = \{ x \in \mathfrak{p} : \widetilde{ad}_x \in Der(L) \}$, then $\mathfrak{p}_0$ is a $\g$-submodule of the irreducible $\g$-module $\mathfrak{p}$, so that $\mathfrak{p}_0$ is either $0$ or $\mathfrak{p}$. Now Proposition \ref{PA.2} shows that there exist three elements $X,Y,Z \in \mathfrak{p}$ such that 
	\[ \widetilde{ad}_Z [X,Y]_\mathfrak{m} \neq [\widetilde{ad}_Z X,Y]_\mathfrak{m} + [X, \widetilde{ad}_Z Y ]_\mathfrak{m}, \]
in particular $Z \notin \mathfrak{p}_0$ and thus $\mathfrak{p}_0 = 0$.
\end{proof}

\begin{prop}\label{P3.3}
$Der(\m) = \widetilde{ad}(\g \oplus \mathfrak{k})$
\end{prop}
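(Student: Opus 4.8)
The plan is to show the reverse inclusion $Der(\m) \subseteq \widetilde{ad}(\g \oplus \mathfrak{k})$, the inclusion $\supseteq$ having been observed already. The strategy is to decompose an arbitrary derivation into pieces governed by the $\g$-module structure of $\m = \g \oplus \mathfrak{p}$ and then to eliminate each piece using Schur's Lemma, the bracket identity $[\delta, \widetilde{ad}_x] = \widetilde{ad}_{\delta x}$ from Lemma \ref{L3.2}, and the non-Lie obstruction of Proposition \ref{PA.2}. First I would note that $\widetilde{ad}(\g)$ acts on $\m$ preserving the decomposition $\m = \g \oplus \mathfrak{p}$, since $\g$ is a subalgebra and $\mathfrak p$ is $\g$-stable; moreover on $\g$ this action is the adjoint action of the simple algebra $\g$, and on $\mathfrak p$ it is the irreducible module $\mathbb{C}^{2k,2l}$. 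Together with $\widetilde{ad}(\mathfrak k)$, which acts trivially on $\g$ and as the $\mathfrak{sp}(1)$-factor on $\mathfrak p$, we already have a copy of $\g \oplus \mathfrak k$ sitting inside $Der(\m)$, and it acts on $Der(\m)$ itself by the adjoint representation, so $Der(\m)$ is a $\g \oplus \mathfrak k$-module.

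Next I would analyze $Der(\m)$ as a $\g$-module. A derivation $\delta$ restricted to $\g$ and composed with projection to $\mathfrak p$ gives a $\g$-homomorphism $\g \to \mathfrak p$ (because $\delta[\g,\g] \subseteq [\delta\g, \g] + [\g, \delta\g]$ controls the $\mathfrak p$-component via the module axioms), and since $\g \cong V(2\omega_1)_{\mathbb R}$-type and $\mathfrak p \cong \mathbb{C}^{2k,2l}_\mathbb{R}$ are non-isomorphic irreducible $\g$-modules, Schur's Lemma (Lemma \ref{Schur}) forces this component to vanish; similarly the $\g \to \mathfrak p \to \g$ crossing component vanishes. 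Hence every derivation preserves the splitting $\m = \g \oplus \mathfrak p$. On $\g$, a derivation restricts to a derivation of the simple Lie algebra $\g$ — here one must check that the bracket $[\cdot,\cdot]_\m$ restricted to $\g$ really is the Lie bracket of $\g$, which it is since $\g$ is a subalgebra of $\h$ disjoint from $\mathfrak k$ — so by completeness of semisimple Lie algebras the restriction is $\widetilde{ad}_{x_0}|_\g$ for a unique $x_0 \in \g$. Subtracting $\widetilde{ad}_{x_0}$, we reduce to a derivation $\delta$ vanishing on $\g$.

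It remains to pin down $\delta|_{\mathfrak p} : \mathfrak p \to \mathfrak p$. Since $\delta$ vanishes on $\g$ and is a derivation, applying $\delta$ to $[X, v]_\m$ for $X \in \g$, $v \in \mathfrak p$ shows that $\delta|_{\mathfrak p}$ is a $\g$-endomorphism of the irreducible real module $\mathfrak p$; by Schur together with the description of $Hom_\g(\mathfrak p) \cong \mathbb H$ used in Lemma \ref{R1.4} and Corollary \ref{C1.4}, $\delta|_{\mathfrak p}$ lies in this quaternion algebra, and the part commuting with nothing extra is exactly $\widetilde{ad}(\mathfrak k)$ acting on $\mathfrak p$, plus possibly a scalar. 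I would rule out the residual scalar (and any quaternionic part not coming from $\mathfrak k$) by testing the derivation property on the bracket $\mathfrak p \times \mathfrak p \to \g$, i.e. on $\Omega_\g$: a scalar multiplication by $c$ on $\mathfrak p$ would force $c \cdot \Omega_\g(v,w) = 2c\,\Omega_\g(v,w)$ hence $c=0$ whenever $\Omega_\g \neq 0$, and by Proposition \ref{PA.2} it is nonzero; more carefully, using $[\delta, \widetilde{ad}_x] = \widetilde{ad}_{\delta x}$ with $x \in \mathfrak p$ and the fact that the $\widetilde{ad}_x$ for $x \in \mathfrak p$ are \emph{not} derivations (Lemma \ref{L3.2}) constrains $\delta|_{\mathfrak p}$ to commute with the $\mathfrak k$-action, and combined with the quaternionic Schur description this leaves precisely $\widetilde{ad}(\mathfrak k)|_{\mathfrak p}$. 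The main obstacle I anticipate is exactly this last step: carefully separating, inside $Hom_\g(\mathfrak p) \cong \mathbb H$, the subspace realized by $\widetilde{ad}(\mathfrak k)$ (a copy of $\mathfrak{sp}(1)$, i.e. the imaginary quaternions) from the unwanted scalar $\mathbb R$, and checking that the scalar direction genuinely fails the derivation axiom against the quadratic bracket $\Omega_\g$ — this requires knowing that $\Omega_\g$ is surjective onto $\g$, or at least nonzero, which Proposition \ref{PA.2} supplies, and that its behaviour under a dilation of $\mathfrak p$ is the quadratic scaling, a short explicit check.
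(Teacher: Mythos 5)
The weak point is your first elimination step. For $\delta \in Der(\m)$ and $x,y \in \g$, writing $B = \mathrm{pr}_{\mathfrak{p}} \circ \delta|_{\g}$, the derivation identity only yields $B([x,y]) = [B(x),y] + [x,B(y)]$, i.e. $B$ is a $1$-cocycle of $\g$ with values in $\mathfrak{p}$, not a $\g$-module homomorphism, so Lemma \ref{Schur} does not apply to it (a nonzero map of this shape exists, e.g. $x \mapsto [v,x]$ for $v \in \mathfrak{p}$, which is the $\g \to \mathfrak{p}$ part of $\widetilde{ad}_v$). Since everything afterwards rests on derivations preserving the splitting $\m = \g \oplus \mathfrak{p}$, this is a genuine gap. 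The correct way to kill $B$ --- and it is exactly the paper's --- is the bracket identity you only deploy at the very end: for $x \in \g$ write $\delta x = x_1 + x_2$ with $x_1 \in \g$, $x_2 \in \mathfrak{p}$; then $\widetilde{ad}_{x_2} = [\delta, \widetilde{ad}_x] - \widetilde{ad}_{x_1}$ lies in $Der(\m)$, and Lemma \ref{L3.2} forces $x_2 = 0$. (Whitehead's lemma would only tell you $B$ is a coboundary $x \mapsto [x,v]$, which by itself is not enough, precisely because $\widetilde{ad}_v$ is not a derivation.)

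Once $\delta(\g) \subseteq \g$ is secured this way, the rest of your plan is sound and essentially parallels the paper: with $B=0$ the component $\mathfrak{p} \to \g$ genuinely is a $\g$-homomorphism between non-isomorphic irreducibles, so Schur kills it; $\delta|_{\g}$ is an inner derivation of the simple algebra $\g$ (the paper instead splits $Der(\m) = \widetilde{ad}(\g) \oplus I_0$ into ideals via the Killing form of $Der(\m)$, but your subtraction achieves the same reduction); and after reducing to $\delta|_{\g} = 0$ one has $\delta|_{\mathfrak{p}} \in Hom_{\g}(\mathfrak{p}) \cong \mathbb{H} = \mathbb{R}\,id \oplus \widetilde{ad}(\mathfrak{k})|_{\mathfrak{p}}$, so $\delta = c\,id_{\mathfrak{p}} + \widetilde{ad}_{\kappa}$ with $\kappa \in \mathfrak{k}$. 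Your exclusion of the scalar by testing the derivation axiom against the $\g$-valued bracket of $\mathfrak{p}$ with itself is right in substance (note the left-hand side is $0$, not $c\,\Omega_{\g}(v,w)$, since $\delta$ vanishes on $\g$; one still gets $c\,\Omega_{\g} = 0$, hence $c=0$ by Proposition \ref{PA.2}), and it is in fact a detail the paper passes over silently when concluding $I_0 = \widetilde{ad}(\mathfrak{k})$ from $\mathfrak{sp}(1) \subseteq I_0 \subseteq \mathbb{H}$; the intermediate remark that the identity $[\delta,\widetilde{ad}_x]=\widetilde{ad}_{\delta x}$ for $x \in \mathfrak{p}$ "constrains $\delta|_{\mathfrak{p}}$ to commute with the $\mathfrak{k}$-action" is neither needed nor justified, but it is harmless.
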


\begin{proof}
Take $\delta \in D = Der(\m)$ and $x \in \g$, then we have a decomposition $\delta x = x_1 + x_2$, where $x_1 \in \g$, $x_2 \in \mathfrak{p}$ and
	\begin{displaymath}
		[\delta , ad_x] = \widetilde{ad}_{(\delta x)} = ad_{x_1} + \widetilde{ad}_{x_2} \in D,
	\end{displaymath}
so $[\delta , ad_x] - \widetilde{ad}_{x_1} = \widetilde{ad}_{x_2} \in D$ and by Lemma \ref{L3.2}, $x_2 = 0$, this implies that $I := \widetilde{ad}(\g)$ is a simple ideal of the Lie algebra $D$, so the killing form of $I$ is non-degenerated and is precisely the restriction of $k_D$ (the killing form of $D$).

Take $I_0 = I^\perp$ the orthogonal complement of $I$ in $D$ with respect to $k_D$ so that we have a direct sum decomposition $D = I \oplus I_0$, where $I_0$ is again an ideal of $D$ (this is a consequence of the fact that $k_D$ is invariant under the Lie bracket of $D$). Moreover we have $[I_0 , I ] = I_0 \cap I = \{0\}$, this implies that for every $x \in \g$ and $\delta \in I_0$, $0 = [\delta, ad_x] = ad_{(\delta x)}$, so $\delta( \g) = 0$ because $\widetilde{ad}$ is injective when restricted to $\g$. Thus, for every $\delta \in I_0$ and $x \in \g$, we have $\delta \circ ad_x = ad_x \circ \delta$ and $\delta( \g) = 0$, so the restricted map $\delta_{|_\mathfrak{p}} : \mathfrak{p} \rightarrow \mathfrak{p}$ is a $\g$-homomorphism. Recall that $Hom_\g(\mathfrak{p}) \cong \mathbb{H}$ as an associative algebra under the composition (see the proof of Lemma \ref{R1.4}), and as $\mathfrak{k}$ commutes with $\g$, we have that 
	\begin{displaymath}
		\mathfrak{sp}(1) \cong \widetilde{ad}(\mathfrak{k}) \subset I_0 \subset Hom_\g(\mathfrak{p}) \cong \mathbb{H},
	\end{displaymath}
so $I_0 = \widetilde{ad}(k)$ and the proposition follows.
\end{proof} 

If $\m$ is $Ad_h$-invariant for some $h \in H$, then we denote the restricted map by $\widetilde{Ad}_h$ so that we have $\widetilde{Ad}(G \times K) \subset Aut(\m)$.

\begin{cor}\label{C3.2}
$Aut(\m)$ is an algebraic group with $\widetilde{Ad}(G \times K)$ as a finite index subgroup.
\end{cor}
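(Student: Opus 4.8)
The plan is to deduce the structure of $\mathrm{Aut}(\m)$ from the now-known structure of its Lie algebra $\mathrm{Der}(\m)$. By Corollary \ref{C3.1}, $\mathrm{Aut}(\m)$ is an algebraic subgroup of $GL(\m)$, so it has finitely many connected components and its identity component $\mathrm{Aut}(\m)_0$ is the connected subgroup with Lie algebra $\mathrm{Der}(\m)$. By Proposition \ref{P3.3}, $\mathrm{Der}(\m) = \widetilde{ad}(\g \oplus \mathfrak{k})$, and since $G \times K$ is connected with Lie algebra $\g \oplus \mathfrak{k}$ acting via $\widetilde{ad}$, the image $\widetilde{Ad}(G \times K)$ is a connected subgroup of $\mathrm{Aut}(\m)$ with the same Lie algebra $\mathrm{Der}(\m)$. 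Therefore $\widetilde{Ad}(G \times K) = \mathrm{Aut}(\m)_0$, which is automatically of finite index in $\mathrm{Aut}(\m)$.

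The first step is to confirm that $\mathrm{Aut}(\m)$ is an algebraic group: it is the set of $T \in GL(\m)$ satisfying the polynomial equations $T[X,Y]_\m = [TX,TY]_\m$ for $X,Y$ ranging over a basis of $\m$, hence Zariski closed in $GL(\m)$, and a real algebraic group has only finitely many connected components (in the Hausdorff topology) with $\mathrm{Aut}(\m)_0$ being the Lie subgroup integrating $\mathrm{Der}(\m)$. The second step is to identify $\widetilde{Ad}(G \times K)$ with $\mathrm{Aut}(\m)_0$: the homomorphism $G \times K \to \mathrm{Aut}(\m)$, $(g,\eta) \mapsto \widetilde{Ad}_{(g,\eta)}$, has image a connected subgroup whose Lie algebra is $\widetilde{ad}(\g \oplus \mathfrak{k}) = \mathrm{Der}(\m)$ by Proposition \ref{P3.3}; two connected Lie subgroups of a Lie group with the same Lie algebra coincide, so $\widetilde{Ad}(G \times K) = \mathrm{Aut}(\m)_0$. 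Finiteness of the index is then immediate from the first step.

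The one point requiring a little care — and the main obstacle — is verifying that $\widetilde{Ad}(G \times K)$ is genuinely a \emph{subgroup} (not just an immersed image) and that it is closed, so that the identification with $\mathrm{Aut}(\m)_0$ is as topological groups, not merely as abstract groups. Since $\widetilde{Ad}(G\times K) \subset \mathrm{Aut}(\m) \subset GL(\m)$ and the connected Lie subgroup of the algebraic group $\mathrm{Aut}(\m)$ integrating the algebraic subalgebra $\mathrm{Der}(\m) = \widetilde{ad}(\g\oplus\mathfrak{k})$ is precisely the identity component $\mathrm{Aut}(\m)_0$, which is closed, this identification is automatic. Alternatively one notes that $\g \oplus \mathfrak{k}$ is semisimple so $\widetilde{Ad}(G\times K)$ is a semisimple subgroup, hence closed in $GL(\m)$. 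Either way the conclusion follows, and the proof is short.

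\begin{proof}
The set $\mathrm{Aut}(\m)$ is cut out inside $GL(\m)$ by the polynomial identities $T[X,Y]_\m = [TX,TY]_\m$, so it is a real algebraic group; in particular it has finitely many connected components and its identity component $\mathrm{Aut}(\m)_0$ is the connected Lie subgroup of $GL(\m)$ with Lie algebra $\mathrm{Der}(\m)$. By Proposition \ref{P3.3}, $\mathrm{Der}(\m) = \widetilde{ad}(\g \oplus \mathfrak{k})$. Since $G \times K$ is connected, $\widetilde{Ad}(G \times K)$ is a connected subgroup of $\mathrm{Aut}(\m)$, and its Lie algebra is the image of $\g \oplus \mathfrak{k}$ under the differential of $\widetilde{Ad}$, which is $\widetilde{ad}(\g \oplus \mathfrak{k}) = \mathrm{Der}(\m)$. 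As $\g \oplus \mathfrak{k}$ is semisimple, $\widetilde{Ad}(G \times K)$ is closed in $GL(\m)$. Two closed connected subgroups of $GL(\m)$ with the same Lie algebra coincide, so $\widetilde{Ad}(G \times K) = \mathrm{Aut}(\m)_0$, which has finite index in $\mathrm{Aut}(\m)$.
\end{proof}
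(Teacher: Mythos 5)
Your proof is correct and follows essentially the same route as the paper: identify $Aut(\m)$ as a real algebraic group, use Proposition \ref{P3.3} to see that $\widetilde{Ad}(G \times K)$ is a connected subgroup whose Lie algebra equals $Der(\m) = Lie(Aut(\m))$, conclude it is the identity component, and invoke finiteness of the number of components of an algebraic group. The extra care you take about closedness of $\widetilde{Ad}(G\times K)$ is harmless but not needed, since a connected Lie subgroup whose Lie algebra is all of $Lie(Aut(\m))$ already coincides with $Aut(\m)_0$.
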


\begin{proof}
$Aut(\m)$ is an algebraic group because it is the automorphism group of a bilinear map and the subgroup $\widetilde{Ad}(G \times K)$ is a Lie subgroup with Lie algebra $\widetilde{ad}(\g \oplus \mathfrak{k}) = Der(\m) = Lie(Aut(\m))$, this implies that $\widetilde{Ad}(G \times K)$ is the connected component of the identity of $Aut(\m)$, the result now follows from the fact that an algebraic group has only finitely many connected components \cite[Thm. 3.6]{P-R}.
\end{proof}

\subsection{Proof of Theorem \ref{T3.1}}

Consider the conjugation map
	\[ C : G \times K \rightarrow Iso(K \bs H), \qquad C(g) = L_g \circ R_{g^{-1}}, \]
so that $C(G \times K) \subset Iso(K \bs H, Ke)$ and $\lambda_e \circ C(G \times K) = \widetilde{Ad}(G \times K)$. Corollary \ref{C3.1}, Corollary \ref{C3.2} and the fact that 
	\[	\widetilde{Ad}(G \times K) \subset \lambda_e(Iso(K \bs H, Ke)) \subset Aut(\m)	\]
imply that $\lambda_e(Iso(K \bs H, Ke))$ is a Lie subgroup of $Aut(\m)$ consisting of a finite number of connected components, in particular, $C(G \times K)$ is the connected component of the identity and has finite index in $Iso(K \bs H, Ke)$. The group $Iso( K \bs H)$ acts transitively on the connected manifold $K \bs H$ and the isotropy subgroup $Iso(K \bs H, Ke)$ has finitely many components, this implies that $Iso(K \bs H)$ has finitely many components, see for example \cite[Lemma 2.1]{Q1}. 
Denote by $L$ the connected component of the identity of $Iso(K \bs H)$ so that we have
		\begin{equation}\label{dim-isot} dim(L \cap Iso(K \bs H, Ke) ) = dim(C(G \times K)),	\end{equation}
now the action of $L$ in $K \bs H$ is also transitive, so we have a double identification
	\[	L /(L \cap Iso(K \bs H, K e)) \cong K \bs H \cong L(G)R(H) / C(G \times K),	\]
these identifications together with identity (\ref{dim-isot}) tells us that we have an inclusion of connected groups of the same dimension $L(G)R(H) \subset L$, then they must be equal. Finally, the homomorphism
	\begin{displaymath}
		L \times R^{-1} : G \times H \rightarrow Iso(K \bs H)_0
	\end{displaymath}
is a covering homomorphism of Lie groups, so its kernel must be contained in the center, that from Proposition \ref{P1.7} consists of the elements $(\pm e_G, \pm e_H)$, but we can see directly that $(e_G, -e_H)$ and $(-e_G, e_H)$ are not contained in such kernel.

\subsection{Proof of Theorem \ref{T3.2}}

Let us denote as before $H = Sp(p,q)$ and $K = Sp(1)$, by hypothesis there exists an homomorphism
	\begin{displaymath}
		\eta : G \rightarrow Iso(K \backslash H)
	\end{displaymath}
that commutes with the right $H$ action, as $G$ is simply connected, Theorem \ref{T3.1} implies that there exist two homomorphisms $\rho_1 : G \rightarrow G$ and $\rho_2 : G \rightarrow H$, such that $\eta(g) = L_{\rho_1(g)} \circ R_{\rho_2(g)^{-1}}$. The commutativity with $H$ is the same as to say that for all $h \in H$ and $g \in G$ we have
	\begin{displaymath}
		L_{\rho_1(g)} \circ R_{\rho_2(g)^{-1}} \circ R_{h^{-1}} = L_{\rho_1(g)} \circ R_{h^{-1}} \circ R_{\rho_2(g)^{-1}},
	\end{displaymath}
this implies that $R_{\left(\rho_2(g) h \right)^{-1}} = R_{\left(h \rho_2(g) \right)^{-1}}$, and so by a straighforward computation, this implies $[\rho_2(g), h] = \rho_2(g) h \rho_2(g)^{-1} h^{-1} \in K$. We have that $G' = \rho_2(G)$ is a simple Lie group of dimension less than or equal to $G$, then $[G',G']$ is a simple Lie group of the same dimension as $G'$ that by the previous analysis is contained in $K$, so the only possibility for this is that $G' = \{e\}$, $\rho_2$ is trivial and then $\eta(g) = L_{\rho(g)}$, for $\rho := \rho_1$. We can write the manifold as
	\begin{displaymath}
		M \cong \left(K \backslash H \right) / \Lambda, \quad \textrm{where} \quad \Lambda = \pi_1(M) \leq Iso(K \backslash H),
	\end{displaymath}
then $\Lambda$ acts in the set of connected components of $Iso(K \backslash H)$ by translations and $\Lambda_0 = \Lambda \cap Iso_0(X, \overline{e})$ is the isotropy of the connected component of the identity $Iso_0(X, \overline{e})$ under this action, Theorem \ref{T3.1} implies that $\Lambda_0 \subset L(G) R(H)$ is a finite index subgroup of $\Lambda$. Consider the homomorphism realizing the connected component of the identity
	\[ (L,R) : G \times H \rightarrow Iso_0(X, \overline{e}), \]
then $\Lambda_1 = (L,R)^{-1}(\Lambda_0)$ is a discrete subgroup of $G \times H$ and the fact that the $G$-action descends to $M$ is equivalent to the condition  
	\begin{displaymath}
		L_{\gamma_1} \circ L_g = L_g \circ L_{\gamma_1}, \quad \forall g \in G, \ (\gamma_1,\gamma_2) \in \Lambda_1.
	\end{displaymath}
As $G \cap K = \{e\}$, this implies that $\gamma_1 \in Z(G) \cong \mathbb{Z}_2$, so $\Gamma = \Lambda_1 \cap \left(\{e\} \times H\right)$ is a finite index subgroup of $\Lambda_1$ that is completely contained in $H$, we thus have that $\widehat{M} := K \backslash H / \Gamma \rightarrow M$ is a finite covering and then $\widehat{M}$ also has finite volume. The projection map $H / \Gamma \rightarrow K \backslash H / \Gamma$ is a smooth fibration with compact fiber so that $H / \Gamma$ also has finite measure with the proyected Haar measure, that is $\Gamma \subset H$ is a lattice.

\section{Isometric actions of $Sp(k,l)$}\label{arbitrary}

In this chapter, $M$ denotes a connected, finite volume, complete, analytic pseudo-Riemannian manifold admiting an analytic isometric action of a symplectic group $G = Sp(k,l)$ such that $k,l \geq 1$, $n = k+l \geq 3$. Moreover we suppose that the $G$-action in $M$ is topologically transitive, i.e. it has a dense orbit.

\subsection{Gromov's centralizer theorem}

Observe that thanks to the fact that $Sp(k,l)$ is simply connected, the action lifts to an analytic isometric action to the universal covering of $M$ that is again complete and we denote it as $\widetilde{M}$. In the present context there is a foliation on $\widetilde{M}$ denoted by $\mathcal{O}$ generated by the $G$-action whose tangent bundle is trivializable via
	\[ \widetilde{M} \times \g \cong T \mathcal{O}, \qquad (p,X) \mapsto X^*_p, \]
where $X^*_p := \frac{d}{dt}_{|_{t=0}} exp(tX) \cdot p$, so that under this trivialization, the pseudo-Riemannian metric corresponds to a rescaling of the Killing form of $\g$. Denote by $Kill(\widetilde{M})$ the Lie algebra of Killing fields in $\widetilde{M}$ and by $Kill_0(\widetilde{M},x)$ the subalgebra of Killing fields that vanish at $x$, then there is a well define homomorphism
	\[ \lambda_x : Kill_0(\widetilde{M}, x) \rightarrow \mathfrak{so}(T_x \widetilde{M}), \]
defined by $\lambda_x(Y)(u) = [Y,U]_x,$ where $U$ is any vector field extending $u$ in a neighborhood of $x$, see \cite{Q2} for a proof of these assertions. In \cite{G}, M. Gromov proved that in the presence of the $Sp(k,l)$-action with a dense orbit, there is a large number of Killing fields centralizing the action. The following version of Gromov's theorem can be found in \cite{Q2} for germs of Killing fields, then such germs extend to global Killing fields in $\widetilde{M}$ because it is analytic and simply connected, see \cite{G}, \cite{Q-C} and \cite{N}.

\begin{prop}\label{P4.1}
There exists an open dense subset $U_0 \subset \widetilde{M}$ such that for all $x \in U_0$, there is an injective homomorphism $\rho_x : \g \rightarrow Kill(\widetilde{M})$ that is an isomorphism onto its image $\g(x) := \rho_x(\g)$. Morever 

	\begin{enumerate}
\item $\g(x) \subset Kill_0(\widetilde{M}, x)$, i.e. every element of $\g(x)$ vanishes at $x$.

\item For every $X,Y \in \g$,
	\begin{displaymath}
		[\rho_x(X),Y^* ] = [X,Y]^* = -[X^*,Y^*]
	\end{displaymath}
in a neighborhood of $x$, thus the elements of $\g(x)$ and their corresponding local flows  preserve both $\mathcal{O}$ and $T \mathcal{O}^\perp$ in a neighborhood of $x$. 

\item The representation $\lambda_x \circ \rho_x : \g \rightarrow \mathfrak{so}(T_x \widetilde{M})$ induces a $\g$-module structure in $T_x \widetilde{M}$ such that $T_x \mathcal{O}$ and $T_x \mathcal{O}^\perp$ are $\g$-submodules, where $\lambda_x$ is the isotropy representation of $Kill_0(\widetilde{M},x)$.
	\end{enumerate}
\end{prop}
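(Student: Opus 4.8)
The statement to be proved is Proposition \ref{P4.1}, which packages Gromov's centralizer theorem in the form needed later. The plan is to reduce everything to the general machinery already developed for isometric actions of simple Lie groups on analytic pseudo-Riemannian manifolds (Gromov, Zimmer, Quiroga-Barranco), and then to extract the three enumerated consequences as essentially formal corollaries of the existence of the homomorphism $\rho_x$ together with the centralizing property. First I would invoke the fact that $\widetilde{M}$ is analytic, complete and simply connected, so that the local statements about germs of Killing fields (which is what Gromov's original argument produces, see \cite{G}, \cite{Q-C}, \cite{N}, \cite{Q2}) globalize: a germ of a Killing field at a point extends uniquely to a global Killing field on $\widetilde{M}$. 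Thus it suffices to produce the germs with the required properties on an open dense set $U_0$.

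The core input is Gromov's theorem applied to the dense-orbit $G$-action: on an open dense $G$-invariant set $U_0$ there is, for each $x\in U_0$, a Lie algebra embedding $\rho_x:\g\hookrightarrow Kill(\widetilde M)$ whose image $\g(x)$ centralizes the original $G$-action (i.e. $[\rho_x(X),Y^*]=0$ after the sign normalization $[X^*,Y^*]=-[X,Y]^*$, which is why the identity in (2) takes the stated form) and such that $\g(x)$ is a ``copy'' of $\g$ sitting transversally. The subtle point, which I would attribute to \cite{Q2}, is that the isotropy algebra $Kill_0(\widetilde M,x)$ contains a copy of $\g$ acting on $T_x\widetilde M$ via $\lambda_x$; the openness/density of $U_0$ and the analyticity are what make $x\mapsto\rho_x$ well-behaved and the rank condition on the image $\g(x)$ covering the $T\mathcal O$-directions (as recorded in the trivialization $\widetilde M\times\g\cong T\mathcal O$ at the start of the subsection) hold. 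I would not reprove this; I would cite it.

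Given $\rho_x$, the three items are bookkeeping. For (1), the construction of $\rho_x$ in \cite{Q2} is arranged precisely so that $\rho_x(X)$ vanishes at $x$: $\g(x)\subset Kill_0(\widetilde M,x)$ by construction, since the transverse copy is built from the stabilizer data at $x$. For (2), the identity $[\rho_x(X),Y^*]=[X,Y]^*$ in a neighborhood of $x$ is exactly the centralizing relation together with the Lie-algebra homomorphism property of $X\mapsto X^*$ (with the sign $[X^*,Y^*]=-[X,Y]^*$ coming from the convention that $X^*$ is the fundamental vector field of a left action); once this bracket identity holds, the flows of elements of $\g(x)$ preserve $\mathcal O$ because they preserve the distribution spanned by the $Y^*$, and they preserve $T\mathcal O^\perp$ because they are isometries and hence preserve orthogonal complements of preserved distributions. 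For (3), $\lambda_x\circ\rho_x$ is a Lie algebra homomorphism $\g\to\mathfrak{so}(T_x\widetilde M)$ because $\lambda_x$ is a homomorphism on $Kill_0(\widetilde M,x)$ and $\rho_x$ is one into it; the subspaces $T_x\mathcal O$ and $T_x\mathcal O^\perp$ are $\g$-submodules because, differentiating the flow-invariance from (2) at $x$, the operators $\lambda_x(\rho_x(X))$ preserve both $T_x\mathcal O$ and $T_x\mathcal O^\perp$.

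The main obstacle is honestly not in this proof at all but in the citation: the real work is Gromov's theorem and its analytic-globalization, which I am taking as a black box from \cite{G}, \cite{Q2}, \cite{Q-C}, \cite{N}. If one wanted to be self-contained, the hard step would be establishing the existence of the transverse copy $\g(x)$ on a dense set with the vanishing-at-$x$ and centralizing properties — that requires the algebraicity of the relevant representations and Zimmer's arguments. Here I would simply state that Gromov's machinery, in the form recorded in \cite{Q2}, yields exactly the asserted $\rho_x$ and $U_0$, and then dispatch (1)--(3) as above.
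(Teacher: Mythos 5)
Your proposal is correct and matches the paper's treatment: the paper gives no independent proof of Proposition \ref{P4.1}, but simply records it as the version of Gromov's centralizer theorem from \cite{Q2} for germs of Killing fields, globalized on the analytic, simply connected $\widetilde{M}$ via \cite{G}, \cite{Q-C}, \cite{N} — exactly the citation-plus-globalization route you take, with your dispatch of items (1)--(3) being consistent bookkeeping on top of that.
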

Define the \textbf{centralizer of the action} as the Lie algebra 
	\[ \mathcal{H} = \{Y \in Kill(\widetilde{M}) : [Y,X^*] = 0, \quad \forall X \in \g \},	\]
the following Theorem tells us that $\mathcal{H}$ is transitive in an open dense subset, this is a consequence of Gromov's open-dense Theorem and Gromov's centralizer Theorem, see \cite[Lemma 4.1]{Z3} for a sketch of its proof.

\begin{teo}\label{T4.1}
There is an open dense subset $U_1 \subset \widetilde{M}$ such that the evaluation map
	\[ ev_x : \mathcal{H} \rightarrow T_x \widetilde{M}, \qquad ev_x(X) = X_x, \]
is surjective for every $x \in U_1$.
\end{teo}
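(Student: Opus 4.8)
The statement to prove is Theorem \ref{T4.1}: there is an open dense subset $U_1 \subset \widetilde{M}$ such that the evaluation map $ev_x : \mathcal{H} \to T_x\widetilde{M}$ is surjective for every $x \in U_1$.

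The plan is to combine two inputs that are already available: Proposition \ref{P4.1} (Gromov's centralizer theorem), which on an open dense set $U_0$ produces a copy $\g(x) = \rho_x(\g) \subset \mathcal{H}$ of Killing fields centralizing the $G$-action, together with the fact that such $\g(x)$ preserves the splitting $T_x\widetilde{M} = T_x\mathcal{O} \oplus T_x\mathcal{O}^\perp$; and Gromov's open-dense (``stratification'') theorem, which says that on an open dense set the orbit of the Killing-field pseudogroup through $x$ is a submanifold of maximal dimension, so that the tangent space to that orbit is covered by evaluations of global Killing fields. First I would recall that $\widetilde{M}$ is analytic, simply connected, and complete, so every germ of a local Killing field at a point extends to a global Killing field on $\widetilde{M}$ (this is cited in the excerpt via \cite{G}, \cite{Q-C}, \cite{N}); this lets us pass freely between the local statements of Gromov's theory and the global Lie algebra $\mathcal{H}$.

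The key steps, in order: (1) On $U_0$, observe that $\g(x) \subset \mathcal{H}$ and that its evaluation $ev_x(\g(x))$ is exactly $T_x\mathcal{O}$ — indeed by part (2) of Proposition \ref{P4.1} the field $\rho_x(X) + X^*$ is a Killing field centralizing the action whose value at a generic nearby point is $X^*$, so the tangent directions to the $G$-orbit lie in $ev_x(\mathcal{H})$; hence surjectivity onto the orbit directions is automatic on an open dense set. (2) Apply Gromov's open-dense theorem to the pseudo-group generated by $\mathcal{H}$: there is an open dense $U_1' \subset \widetilde{M}$ on which the $\mathcal{H}$-orbits have locally constant (maximal) dimension and are embedded submanifolds, so that for $x \in U_1'$ one has $ev_x(\mathcal{H}) = T_x(\mathcal{H}\text{-orbit})$. (3) Use that the $G$-action has a dense orbit: a dense $G$-orbit is contained in a single $\mathcal{H}$-orbit (since $\mathcal{H}$ contains, near generic points, fields whose flows generate the $G$-action up to the centralizing correction), and by density together with the fact that $ev_x$ being surjective is an open condition, one concludes the $\mathcal{H}$-orbit through a generic point is open, hence all of $\widetilde{M}$ on an open dense set; therefore $ev_x$ is surjective there. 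Set $U_1 = U_0 \cap U_1'$ intersected with the open dense set where the $\mathcal{H}$-orbit is open.

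The main obstacle — and the reason this is stated as a theorem to be imported rather than proved in detail here — is step (3): controlling the dimension of the $\mathcal{H}$-orbit and showing it is full. The issue is that a priori the $\mathcal{H}$-orbit need only contain the closure of the dense $G$-orbit's tangent directions plus whatever extra transverse directions Gromov's construction supplies; proving the evaluation is \emph{onto all of} $T_x\widetilde{M}$ requires the full strength of Gromov's machinery (the interplay of the open-dense theorem with the centralizer theorem and the presence of a dense orbit), which is precisely the content cited as \cite[Lemma 4.1]{Z3}. So the honest plan is: reduce to the two cited theorems, assemble the open dense sets, verify that the orbit directions are covered via part (2) of Proposition \ref{P4.1}, and invoke \cite[Lemma 4.1]{Z3} for the transverse directions and the passage from ``open dense orbit directions'' to ``full tangent space.''
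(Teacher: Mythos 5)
Your proposal matches the paper's treatment: the paper does not prove Theorem \ref{T4.1} at all, but imports it as a consequence of Gromov's open-dense theorem and Gromov's centralizer theorem, citing \cite[Lemma 4.1]{Z3} for a sketch, which is exactly the reduction you describe (orbit directions via Proposition \ref{P4.1} and Remark \ref{R4.1}, transverse directions and fullness of the tangent space deferred to the cited lemma). So your approach is essentially the same as the paper's, with the same external reference carrying the real weight.
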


\begin{remark}\label{R4.1}
For every $x \in U = U_0 \cap U_1$, consider the homomorphism of $\g$ into $\mathcal{H}$ given by
	\[	\widehat{\rho}_x : \g  \rightarrow \mathcal{H}, \qquad  \widehat{\rho}_x = \rho_x(X) + X^*, \]
whose image we denote as $\widehat{\rho}_x (\g) = \mathcal{G}(x)$, this homomorphism induces a $\g$-module structure in $\mathcal{H}$ via the adjoint representation and a $\g$-module structure in $T_x \widetilde{M}$ via the isotropy representation that coincides with $\lambda_x \circ \rho_x$, so that $ev_x$ is a $\g$-homomorphism. Moreover if we take the pull-back of the pseudo-Riemannian metric $g_x$ in $T_x \widetilde{M}$ under the evaluation map, then we get a $\g$-invariant bilinear form in $\mathcal{H}$.
\end{remark}

Suppose from now on that the $G$-orbits are non-degenerated leaves of the foliation $\mathcal{O}$, for example if $dim(M) < 2 dim(G)$ \cite[Lemma 2.7]{Q2}, so that we have an orthogonal decomposition and projection
	\[ \omega : T \widetilde{M} = T \mathcal{O} \oplus T \mathcal{O}^\perp \rightarrow T \mathcal{O} \cong \widetilde{M} \times \g, \]
so that $\omega$ can be thougt as a $\g$-valued differentiable 1-form in $\widetilde{M}$ and define $\Omega := d \omega_{|\Lambda^2 T \mathcal{O}^\perp}$.

\begin{prop}\label{P4.2}
For every $x \in U$, consider the $\g$-module structures in $\mathcal{H}$, $T_x \widetilde{M}$, $T_x \mathcal{O}$ and $T_x \mathcal{O}^\perp$ such that $ev_x$ is a $\g$-homomorphism as in Remark \ref{R4.1}, then 
	\begin{enumerate}
\item There is a decomposition $\mathcal{H} = \mathcal{G}(x) \oplus \mathcal{H}_0(x) \oplus \mathcal{V}(x)$ of $\mathcal{G}(x)$-submodules, and 
	\[ ev_x(\mathcal{G}(x)) = T_x \mathcal{O}, \qquad ev_x(\mathcal{V}(x)) = T_x \mathcal{O}^\perp, \qquad \mathcal{H}_0(x) = Ker(ev_x).\]

\item The maps $\omega_x : T_x \widetilde{M} \rightarrow \g$ and $\Omega_x : \Lambda^2 T_x \mathcal{O}^\perp \rightarrow \g$ are $\g$-homomorphisms when $\g$ is considered as a $\g$-module with respect to the adjoint representation.

\item The restriction of the isotropy representation $\lambda_x$ gives an injective homomorphism $\lambda_0 : \mathcal{H}_0(x) \hookrightarrow \mathfrak{so}(T_x \mathcal{O}^\perp)$ such that $\Omega_x$ is $\lambda_0(\mathcal{H}_0(x))$-invariant under the identification $\mathfrak{so}(T_x \mathcal{O}^\perp) \cong \Lambda^2 T_x \mathcal{O}^\perp$, more precisely
	\[ [\lambda_0(\mathcal{H}_0(x)), \mathfrak{so}(T_x \mathcal{O}^\perp)] \subset Ker(\Omega_x), \]
where $[\cdot , \cdot ] $ is the Lie bracket of the algebra $\mathfrak{so}(T_x \mathcal{O}^\perp)$.

\item If $T_x \mathcal{O}^\perp$ is a trivial $\g$-module for every $x$ in an open subset of $U$, then $T \mathcal{O}^\perp$ is integrable.
	\end{enumerate}
\end{prop}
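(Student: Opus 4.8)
The plan is to exploit throughout that $\g=\mathfrak{sp}(k,l)$ is simple, so every finite dimensional real $\g$-module is completely reducible and the adjoint module $\g$ has no non-zero trivial submodule, together with two transformation rules for the $\g$-valued $1$-form $\omega$: on all of $\widetilde{M}$ one has $\mathcal{L}_{X^*}\omega=\mathrm{ad}_X\circ\omega$, and on a neighbourhood of any $x\in U$ one has $\mathcal{L}_{\rho_x(X)}\omega=-\mathrm{ad}_X\circ\omega$. I would derive both rules from Proposition \ref{P4.1}: the flows of $X^*$ and of $\rho_x(X)$ preserve the splitting $T\widetilde{M}=T\mathcal{O}\oplus T\mathcal{O}^\perp$, $\omega$ is identically zero on $T\mathcal{O}^\perp$ and sends $X^*$ to the constant $X$, and $[X^*,Y^*]=-[X,Y]^*$ while $[\rho_x(X),Y^*]=[X,Y]^*$ near $x$; in particular $\mathcal{L}_{\widehat\rho_x(X)}\omega=0$ near $x$.

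For $(1)$ I would argue purely module-theoretically. The evaluation $ev_x$ is a surjective $\g$-homomorphism (Theorem \ref{T4.1} and Remark \ref{R4.1}) with kernel the $\g$-submodule $\mathcal{H}_0(x)$, and $T_x\mathcal{O}$, $T_x\mathcal{O}^\perp$ are $\g$-submodules of $T_x\widetilde{M}$ by Proposition \ref{P4.1}. Since $ev_x\circ\widehat\rho_x(X)=X^*_x$, the submodule $\mathcal{G}(x)=\widehat\rho_x(\g)$ maps onto $T_x\mathcal{O}$, and (the $G$-action being locally free on $U$ after possibly shrinking it) this restriction is injective, so $\mathcal{G}(x)\cap\mathcal{H}_0(x)=0$ and $ev_x^{-1}(T_x\mathcal{O})=\mathcal{G}(x)\oplus\mathcal{H}_0(x)$. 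By complete reducibility I would choose a $\g$-submodule $\mathcal{V}(x)$ complementary to $\mathcal{H}_0(x)$ inside $ev_x^{-1}(T_x\mathcal{O}^\perp)$; from $ev_x^{-1}(T_x\mathcal{O})\cap ev_x^{-1}(T_x\mathcal{O}^\perp)=\mathcal{H}_0(x)$ one gets a $\g$-submodule direct sum $\mathcal{H}=\mathcal{G}(x)\oplus\mathcal{H}_0(x)\oplus\mathcal{V}(x)$, and a dimension count against $\dim\mathcal{H}=\dim\mathcal{H}_0(x)+\dim T_x\widetilde{M}$ forces $ev_x(\mathcal{V}(x))=T_x\mathcal{O}^\perp$.

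For $(2)$, the statement about $\omega_x$ is quickest: since $\omega_x(ev_x(\widehat\rho_x(X)))=\omega_x(X^*_x)=X$ while $\omega_x$ annihilates $ev_x(\mathcal{H}_0(x)\oplus\mathcal{V}(x))=T_x\mathcal{O}^\perp$, the composite $\omega_x\circ ev_x$ is $\widehat\rho_x^{-1}$ followed by the projection of $\mathcal{H}$ onto $\mathcal{G}(x)$ along $\mathcal{H}_0(x)\oplus\mathcal{V}(x)$, which is a $\g$-homomorphism by $(1)$; surjectivity of $ev_x$ transfers this to $\omega_x$. For $\Omega_x$ I would differentiate the second transformation rule to get $\mathcal{L}_{\rho_x(X)}\Omega=-\mathrm{ad}_X\circ\Omega$ near $x$ (the flow of $\rho_x(X)$ preserving $T\mathcal{O}^\perp$), evaluate at $x$ on $u\wedge v$ with $u,v\in T_x\mathcal{O}^\perp$ extended by sections of $T\mathcal{O}^\perp$, and use that $\rho_x(X)$ vanishes at $x$ to kill the directional-derivative term and identify $[\rho_x(X),U]_x$ with the $\lambda_x\circ\rho_x$-action of $X$ on $u$; what remains is exactly the equivariance of $\Omega_x$.

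For $(3)$: $\mathcal{H}_0(x)$ consists of Killing fields vanishing at $x$ and a Killing field of a connected manifold is determined by its $1$-jet at one point, so $\lambda_x$ is injective on $\mathcal{H}_0(x)$; and since every $Y\in\mathcal{H}_0(x)$ satisfies $[Y,X^*]=0$, the operator $\lambda_x(Y)$ annihilates $T_x\mathcal{O}$, hence (being skew) preserves $T_x\mathcal{O}^\perp$, which defines the injection $\lambda_0$. The same relation $[Y,X^*]=0$ makes the flow of $Y$ fix every $X^*$, hence preserve $T\mathcal{O}$, the trivialization $T\mathcal{O}\cong\widetilde{M}\times\g$ and (being isometric) $T\mathcal{O}^\perp$, so $\mathcal{L}_Y\omega=0$ and $\mathcal{L}_Y\Omega=0$; evaluating $(\mathcal{L}_Y\Omega)_x$ on $u\wedge v$ with $Y_x=0$ gives $\Omega_x(\lambda_0(Y)\cdot(u\wedge v))=0$, and since under $\mathfrak{so}(T_x\mathcal{O}^\perp)\cong\Lambda^2 T_x\mathcal{O}^\perp$ the derivation action of $\lambda_0(Y)$ coincides with the adjoint action $[\lambda_0(Y),\cdot\,]$, this is precisely $[\lambda_0(\mathcal{H}_0(x)),\mathfrak{so}(T_x\mathcal{O}^\perp)]\subset\mathrm{Ker}(\Omega_x)$.

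For $(4)$: if $T_x\mathcal{O}^\perp$ is a trivial $\g$-module on an open subset of $U$, then so is $\Lambda^2 T_x\mathcal{O}^\perp$, and by $(2)$ the image of $\Omega_x$ is a trivial submodule of the adjoint module $\g$, hence $\{0\}$; thus $\Omega$ vanishes on that open set, and since $\omega$ — and therefore $\Omega$ — is analytic while $\widetilde{M}$ is connected and analytic, $\Omega\equiv 0$ on all of $\widetilde{M}$. Cartan's formula gives $\Omega(U,V)=-\omega([U,V])$ for sections $U,V$ of $T\mathcal{O}^\perp$, so $[U,V]$ has no $T\mathcal{O}$-component, and Frobenius' theorem yields integrability of $T\mathcal{O}^\perp$.

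I expect the main obstacle to be items $(2)$ and $(3)$: pinning down the exact transformation behaviour of $\omega$, hence of $d\omega$ and $\Omega$, under the flows of $X^*$, $\rho_x(X)$ and the elements of $\mathcal{H}_0(x)$, and keeping the three relevant $\g$-module structures straight — the one on $\mathcal{H}$ from $\mathrm{ad}\circ\widehat\rho_x$, the one on $T_x\widetilde{M}$ from $\lambda_x\circ\rho_x$, and the adjoint one on $\g$ — so that $\omega_x$, $\Omega_x$ and $ev_x$ are genuinely equivariant for them.
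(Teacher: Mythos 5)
Your proof is correct. The comparison with the paper is a little lopsided, though: the paper does not actually prove parts (1)--(3) in situ, it simply cites Propositions 3.7 and 3.10 of the \'Olafsson--Quiroga-Barranco paper \cite{Q-O2} (whose arguments are independent of the particular group), and only writes out part (4); your part (4) is word for word the paper's argument (image of $\Omega_x$ is a trivial submodule of the irreducible adjoint module $\g$, hence zero; analyticity propagates $\Omega\equiv 0$ from an open set; vanishing of $\Omega$ is Frobenius integrability of $T\mathcal{O}^\perp$). What you add is a self-contained derivation of (1)--(3): equivariance and surjectivity of $ev_x$ plus complete reducibility for the splitting $\mathcal{H}=\mathcal{G}(x)\oplus\mathcal{H}_0(x)\oplus\mathcal{V}(x)$, and the Lie-derivative identities $\mathcal{L}_{\rho_x(X)}\omega=-\mathrm{ad}_X\circ\omega$ near $x$ and $\mathcal{L}_Y\omega=0$ for $Y\in\mathcal{H}_0(x)$, evaluated at the zero $x$ of $\rho_x(X)$ resp.\ $Y$ to turn bracket terms into the isotropy action; this is in the same spirit as the cited source, so the gain is self-containedness rather than a new idea. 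Two small points worth making explicit if you write this up: the injectivity of $ev_x$ on $\mathcal{G}(x)$ (your ``locally free'' hedge) is already built into the paper's standing trivialization $\widetilde{M}\times\g\cong T\mathcal{O}$, $(p,X)\mapsto X^*_p$, so no shrinking of $U$ is needed; and the identification $\Lambda^2 T_x\mathcal{O}^\perp\cong\mathfrak{so}(T_x\mathcal{O}^\perp)$ you use in (3), under which the derivation action becomes $[\lambda_0(Y),\cdot\,]$, relies on nondegeneracy of the metric on $T_x\mathcal{O}^\perp$, which is exactly the standing assumption that the $G$-orbits are nondegenerate.
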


\begin{proof}
The proof of parts (1)-(3) are exactly as in Proposition 3.7 and Proposition 3.10 in in \cite{Q-O2}. Part (4) follows from the fact that if $T_x \mathcal{O}^\perp$ is a trivial $\g$-module, then $\Omega_x(\Lambda^2 T_x \mathcal{O}^\perp)$ is a trivial submodule of the irreducible module $\g$, so it must be the zero module. Now if $\Omega_x = 0$ for every $x$ in an open subset, then $\Omega \equiv 0$ because $\Omega$ is an analytic diferentiable form and this is equivalent to the fact that $T \mathcal{O}^\perp$ is an integrable distribution.
\end{proof}

\subsection{Structure of the centralizer of the action}

In this subsection we suppose that $dim(M) \leq n(2n+5)$, where $n = k+l \geq 3$ and $G = Sp(k,l)$. Fix a point $x \in U$ as in Remark \ref{R4.1}, consider the $\g$-module structures in $T_x M$ and $\mathcal{H}$ and recall the isotypic decomposition $\mathcal{H} = \mathcal{G}(x) \oplus \mathcal{H}_0(x) \oplus \mathcal{V}(x)$ together with the properties given by Proposition \ref{P4.2}, we give three technical Lemmas on the structure of such decomposition. The following remark is an immediate consecuence of the decomposition of representations into irreducible ones and Lemma \ref{Schur}.

\begin{remark}\label{R4.2}
If a Lie algebra $\h$ is considered as a $\g$-module for some Lie subalgebra $\g \leq \h$, and $A, V \subset \h$ are $\g$-submodules, then as a consequence of Jacobi identity, the Lie bracket induces $\g$-homomorphisms
	\[ [ \cdot , \cdot ] : A \otimes V \rightarrow \h, \qquad [ \cdot , \cdot ] : \Lambda^2 V \rightarrow \h. \]
Moreover, if $\h = W \oplus Z$ is a $\g$-module decomposition such that $Hom_\g(W , A \otimes V) = Hom_\g(W , \Lambda^2 V) = 0$, then $[A,V], [V,V] \subset Z$.
\end{remark}

\begin{lema}\label{L4.1}
If $\mathcal{V}(x) \subset \mathcal{H}$ is not a trivial $\mathcal{G}(x)$-module then $\mathcal{H}_0(x)$ is a trivial $\mathcal{G}(x)$-module with $dim(\mathcal{H}_0(x)) \leq 3$. If $\mathcal{S} = \mathcal{G}(x) \oplus L \oplus \mathcal{V}(x)$ is a Lie subalgebra of $\mathcal{H}$ with $L$ some trivial $\mathcal{G}(x)$-module then $(\mathcal{S} , \mathcal{G}(x) \oplus L )$ is a symmetric pair, in particular this is true for $\mathcal{S} = \mathcal{H}$. Moreover if $\mathcal{S}$ is a simple Lie algebra, then $L = \mathcal{H}_0(x) \cong \mathfrak{sp}(1)$ and $\mathcal{S} = \mathcal{H}$ is isomorphic to either $\mathfrak{sp}(k+1,l)$ or $\mathfrak{sp}(k,l+1)$.
\end{lema}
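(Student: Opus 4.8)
The plan is to pin down $T_x\mathcal O^\perp$ via Corollary \ref{C1.4}, use Proposition \ref{P4.2} to force $\mathcal H_0(x)$ to be trivial, recognise the resulting symmetric pair, and finally quote Lemma \ref{LA.2}. Since $\mathcal V(x)\cong T_x\mathcal O^\perp$ as $\g$-modules via $ev_x$ and $T_x\mathcal O^\perp$ carries the non-degenerate invariant inner product restricted from the metric, non-triviality of $\mathcal V(x)$ together with $\dim M\le n(2n+5)=\dim\g+4n$ makes Corollary \ref{C1.4} applicable and yields $\mathcal V(x)\cong T_x\mathcal O^\perp\cong\mathbb{H}^{k,l}$, the $\g$-module decomposition $\mathfrak{so}(T_x\mathcal O^\perp)=\g'\oplus\mathfrak{sp}(1)\oplus V_0\oplus V_1\oplus V_2$ (with $\g'=(\lambda_x\circ\rho_x)(\g)$ the adjoint copy, $\mathfrak{sp}(1)$ trivial and centralising $\g'$, each $V_s\otimes\mathbb{C}\cong V(\omega_2)$, and $[V_s,V_s]=\g'$), and the embedding $\lambda_0:\mathcal H_0(x)\hookrightarrow\mathfrak{so}(T_x\mathcal O^\perp)$ of Proposition \ref{P4.2}(3), which is both a $\g$-submodule and a subalgebra (note $\mathcal H_0(x)$ is a subalgebra since its elements vanish at $x$). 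One reduction is needed: we may assume $\Omega_x\neq0$, for otherwise $T\mathcal O^\perp$ is integrable and $\widetilde M$ splits equivariantly as $Sp(k,l)\times N$ (case (1) of Theorem \ref{main4}), which would make $T_x\mathcal O^\perp$ a trivial $\g$-module, against the hypothesis.

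Triviality of $\mathcal H_0(x)$: the only non-trivial irreducible $\g$-types occurring in $\mathfrak{so}(T_x\mathcal O^\perp)$ are $V(2\omega_1)$ (with multiplicity one, inside $\g'$) and $V(\omega_2)$ (inside $V_0\oplus V_1\oplus V_2$); in particular $\mathbb{H}^{k,l}=V(\omega_1)$ does not occur. Suppose $\mathcal H_0(x)$ had a non-trivial irreducible summand $W$. If $W\cong V(2\omega_1)$ then $\lambda_0(W)=\g'$, so by Proposition \ref{P4.2}(3) $\g'=[\g',\g']\subset[\lambda_0(\mathcal H_0(x)),\mathfrak{so}(T_x\mathcal O^\perp)]\subset\mathrm{Ker}(\Omega_x)$; but $\Omega_x$ is a non-zero $\g$-homomorphism into $\g$, hence an isomorphism on the isotypic component $\g'$ by Lemma \ref{Schur}, a contradiction. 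If $W\cong V(\omega_2)$, then $\lambda_0(W)$ is an irreducible copy of $V(\omega_2)$ in $V_0\oplus V_1\oplus V_2$; since $\mathfrak{sp}(1)$ acts on $\mathfrak{so}(T_x\mathcal O^\perp)$ by inner automorphisms fixing $\g'$ and transitively on the lines of the $3$-dimensional multiplicity space, such an automorphism carries $\lambda_0(W)$ to $V_0$, whence $\lambda_0([W,W])=[V_0,V_0]=\g'$ (using that $\mathcal H_0(x)$ is a subalgebra and $\lambda_0$ a Lie homomorphism), and again $\g'\subset\mathrm{Ker}(\Omega_x)$, the same contradiction. Therefore $\mathcal H_0(x)$ is a trivial $\g$-module, sitting inside the trivial isotypic component $\mathfrak{sp}(1)$, so $\dim\mathcal H_0(x)\le3$.

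The symmetric pair and the simple case: if $\mathcal S=\mathcal G(x)\oplus L\oplus\mathcal V(x)$ is a subalgebra with $L$ a trivial $\mathcal G(x)$-module, then $L$ lies in the trivial isotypic part of $\mathcal H$, hence $L\subseteq\mathcal H_0(x)$; by Remark \ref{R4.2} and Lemma \ref{Schur}, since $\Lambda^2\mathcal V(x)\cong\mathfrak{so}(T_x\mathcal O^\perp)$ has no summand $\cong V(\omega_1)$ and $L$ is trivial, one gets $[\mathcal V(x),\mathcal V(x)]\subset\mathcal G(x)\oplus L$ and $[\mathcal G(x)\oplus L,\mathcal V(x)]\subset\mathcal V(x)$, i.e. $(\mathcal S,\mathcal G(x)\oplus L)$ is a symmetric pair; taking $\mathcal S=\mathcal H$ and $L=\mathcal H_0(x)$ gives $(\mathcal H,\mathcal G(x)\oplus\mathcal H_0(x))$. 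If moreover $\mathcal S$ is simple, then $ad(L)|_{\mathcal V(x)}$ is faithful (a kernel would be central in $\mathcal S$), commutes with the $\g$-action (as $[\mathcal G(x),L]=0$) and is skew for the invariant form, so $L$ embeds in $\mathrm{Hom}_\g(\mathbb{H}^{k,l})\cap\mathfrak{so}\cong\mathfrak{sp}(1)$, giving $\dim L\in\{0,1,3\}$. The cases $\dim L\le1$ are excluded via Proposition \ref{PA.2} and Corollary \ref{CA.1}: for $\dim L=0$ the map $\Lambda^2\mathcal V(x)\to\mathcal G(x)$ is, by simplicity, a non-zero multiple of $\Omega_\g$, contradicting the failure of the Jacobi identity after rescaling $\mathcal V(x)$; for $\dim L=1$ the $L$-component of $[\mathcal V(x),\mathcal V(x)]$ is determined by equivariance, and evaluating on the vectors $X,Y,Z$ of Proposition \ref{PA.2} shows $\mathcal S$ cannot be simultaneously perfect and a Lie algebra. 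Hence $\dim L=3$, so $L=\mathcal H_0(x)\cong\mathfrak{sp}(1)$ and $\mathcal S=\mathcal G(x)\oplus\mathcal H_0(x)\oplus\mathcal V(x)=\mathcal H$; since $ad(\mathcal H_0(x))|_{\mathcal V(x)}=\mathfrak{sp}(1)$ by dimension, $\mathcal V(x)\cong\mathbb{H}^{k,l}$ as a $\mathcal G(x)\oplus\mathcal H_0(x)$-module, and Lemma \ref{LA.2} identifies $\mathcal H$ with $\mathfrak{sp}(k+1,l)$ or $\mathfrak{sp}(k,l+1)$.

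The main obstacles I anticipate are the reduction to $\Omega_x\neq0$, which rests on the product-splitting that produces case (1) of Theorem \ref{main4}, and the exclusion of $\dim L=1$; both hinge on the non-associativity of $(\m,[\cdot,\cdot]_\m)$ recorded in Proposition \ref{PA.2}.
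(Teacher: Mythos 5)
For the first two assertions your argument tracks the paper's: Corollary \ref{C1.4} pins down $\mathcal{V}(x)\cong T_x\mathcal{O}^\perp\cong\mathbb{H}^{k,l}$ and the decomposition of $\mathfrak{so}(T_x\mathcal{O}^\perp)$, part (3) of Proposition \ref{P4.2} is what confines $\lambda_0(\mathcal{H}_0(x))$ to the $\mathfrak{sp}(1)$-summand, and the symmetric pair is obtained from Remark \ref{R4.2} and Remark \ref{RA.1}, exactly as in the paper; your exclusion of the types $V(2\omega_1)$ and $V(\omega_2)$ inside $\mathcal{H}_0(x)$, granted $\Omega_x\neq 0$, is correct and in fact more explicit than the paper (which simply asserts $\lambda_0(\mathcal{H}_0(x))\subset\mathfrak{sp}(1)$ from Corollary \ref{C1.4} and Proposition \ref{P4.2}(3), without the detour through an integrability dichotomy). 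The step that fails is your reduction to $\Omega_x\neq 0$. Vanishing of $\Omega$ at the single point $x$ does not make $T\mathcal{O}^\perp$ integrable: Proposition \ref{P4.2}(4) requires triviality (hence vanishing of $\Omega$) on an open set, and only then does analyticity give $\Omega\equiv 0$. Moreover, even granting integrability and the product splitting of the integrable case, triviality of the $\g$-module $T_x\mathcal{O}^\perp$ does not follow: that module structure is defined through whichever $\rho_x$ Gromov's theorem provides, and on a product one may add to the obvious $\rho_x(X)$ a Killing field of the second factor vanishing at $x$, producing a nontrivial module structure on $T_x\mathcal{O}^\perp$. So the implication ``$\Omega_x=0$ contradicts nontriviality of $\mathcal{V}(x)$'' is not established, and your proof of the first claim is only complete at points where $\Omega_x\neq 0$.

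For the last assertion you take a genuinely different route. The paper quotes Berger's classification \cite{Be} of symmetric pairs $(\mathfrak{s},\mathfrak{sp}(k,l)\oplus L)$ with $\mathfrak{s}$ simple and $\dim L\leq 3$, which yields $L\cong\mathfrak{sp}(1)$ and $\mathfrak{s}\cong\mathfrak{sp}(k+1,l)$ or $\mathfrak{sp}(k,l+1)$ at once, and then gets $\mathcal{S}=\mathcal{H}$ by counting dimensions. You instead embed $L$ into $\mathrm{Hom}_\g(\mathcal{V}(x))\cap\mathfrak{so}(\mathcal{V}(x),B)\cong\mathfrak{sp}(1)$, exclude $\dim L\in\{0,1\}$ via the failure of the Jacobi identity (Proposition \ref{PA.2}), and finish with Lemma \ref{LA.2}; this parallels the proof of Theorem \ref{TA.1}, stays within the paper's own machinery, and avoids the external appeal to \cite{Be}, which is a real (and attractive) difference. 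The $\dim L=0$ case is fine: multiplicity one of the adjoint type in $\Lambda^2\mathcal{V}(x)$ plus Schur gives bracket $c\,\Omega_\g$, Proposition \ref{PA.2} rules out $c\neq 0$, and $c=0$ makes $\mathcal{V}(x)$ an abelian ideal. But the $\dim L=1$ exclusion is only asserted: one must write the bracket as $c\,\Omega_\g+\beta(\cdot,\cdot)\,\xi$ with $\beta$ an invariant $2$-form and $ad(\xi)$ an invertible element of $\mathfrak{sp}(1)$, and check that the Jacobi identity fails for every such choice (the case $c=0$ is easy by picking $X$ $\beta$-orthogonal to $Y,Z$; the case $c\neq 0$ needs the explicit computation); the remark about ``perfect'' is not an argument. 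Likewise, the transitivity of the inner $\mathfrak{sp}(1)$-action on the lines of the multiplicity space of the $\omega_2$-isotypic component, used to move $\lambda_0(W)$ onto $V_0$, is true but should be justified. These last two points are fixable computations; the only step that is actually incorrect as written is the $\Omega_x=0$ reduction above.
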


\begin{proof}
By the hypothesis on the dimension of $\widetilde{M}$, we have the bound $dim(\mathcal{V}(x)) \leq 4n$, so if $\mathcal{V}(x)$ is not a trivial $\g$-module, Corollary \ref{C1.4} and part 3 of Proposition \ref{P4.2} imply that $\mathcal{V}(x) \cong \mathbb{R}^{4k,4l}$, we have an isomorphism
	\[ \Lambda^2 \mathcal{V}(x) \cong \g \oplus \mathfrak{sp}(1) \oplus \bigoplus_{1 \leq i \leq 3} V_i, \quad V_i \otimes \mathbb{C} \cong V(\omega_2) \]
of $\g$-modules and $\lambda_0(\mathcal{H}_0(x)) \subset \mathfrak{sp}(1)$ under this identification, in particular it is a trivial $\g$-module, also Remark \ref{R4.2} tells us that
	\[ [ \mathcal{V}(x) , \mathcal{V}(x) ] \subset \mathcal{G}(x) \oplus L. \]
Now, the decomposition
	\[ L \otimes \mathcal{V}(x) \cong \bigoplus_{1 \leq i \leq l} \mathcal{V}(x), \qquad l = dim(L), \]
as a $\mathcal{G}(x)$-module and Remark \ref{R4.2} implies that $\mathcal{V}(x)$ is a stable $\mathcal{G}(x) \oplus L$-subspace with $\mathcal{G}(x) \oplus L$ a Lie subalgebra, so Remark \ref{RA.1} implies that $(\mathcal{S} , \mathcal{G}(x) \oplus L )$ is a symmetric pair. For the second part, recall that M. Berger classified all symmetric pairs $(\mathfrak{s},\mathfrak{t})$ with $\mathfrak{s}$ a simple Lie algebra in \cite{Be} and the only symmetric pairs that appears in this classification with $\mathfrak{t} = \mathfrak{sp}(k,l) \oplus L$ and $dim(L) \leq 3$ are the ones given by $\mathfrak{s}$ isomorphic to either $\mathfrak{sp}(k+1,l)$ or $\mathfrak{sp}(k,l+1)$ and $L \cong \mathfrak{sp}(1)$, finally we see that $\mathcal{S} = \mathcal{H}$ just by counting dimensions.\end{proof}

Recall that for any Lie algebra $\mathfrak{l}$ with a semisimple subalgebra $\mathfrak{t} \subset \mathfrak{l}$ there exists a semidirect product decomposition $\mathfrak{l} = \mathfrak{s} \ltimes \mathfrak{r}$, where $\mathfrak{s}$ is a semisimple Lie algebra of $\mathfrak{l}$ containing $\mathfrak{t}$ called a Levi factor and $\mathfrak{r}$ is the maximal solvable ideal of $\mathfrak{l}$ called the radical and denoted by $\mathfrak{r} = Rad(\mathfrak{l})$, such decomposition is called Levi decomposition, see \cite[Appendix B]{Kn}.

\begin{lema}\label{L4.2}
If $\mathcal{V}(x)$ is not a trivial $\mathcal{G}(x)$-module, then there are three possibilities on the structure of $\mathcal{H}$:
	\begin{enumerate}
		\item $Rad(\mathcal{H}) = \mathcal{H}_0(x) \oplus \mathcal{V}(x)$,
		
		\item $\mathcal{G}(x) \oplus \mathcal{V}(x)$ is a subalgebra of $\mathcal{H}$ with $\mathcal{V}(x)$ an abelian ideal, or

		\item There is an isomorphism of symmetric pairs 
	\[	\left(\mathcal{H}, \mathcal{G}(x) \oplus \mathcal{H}_0(x) \right) \cong ( \mathfrak{h} , \mathfrak{sp}(k,l) \oplus \mathfrak{sp}(1) ), \]
with $\mathfrak{h}$ isomorphic to either $\mathfrak{sp}(k+1,l)$ or $\mathfrak{sp}(k,l+1)$.
	\end{enumerate}
\end{lema}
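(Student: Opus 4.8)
\emph{Proof strategy.} The plan is to combine Lemma \ref{L4.1} with a Levi decomposition of $\mathcal{H}$ and then read off which case occurs from the $\g$-module type of the radical. Since, by hypothesis, $\mathcal{V}(x)$ is not a trivial $\mathcal{G}(x)$-module, Corollary \ref{C1.4} and Lemma \ref{L4.1} already give that $\mathcal{V}(x)\cong\mathbb{R}^{4k,4l}$ is irreducible, that $\mathcal{H}_0(x)$ is a trivial $\g$-module with $\dim\mathcal{H}_0(x)\le 3$ and $[\mathcal{G}(x),\mathcal{H}_0(x)]=0$, and that $(\mathcal{H},\mathcal{A})$ is a symmetric pair, where $\mathcal{A}:=\mathcal{G}(x)\oplus\mathcal{H}_0(x)$; in particular $\mathcal{A}$ is a subalgebra, $[\mathcal{V}(x),\mathcal{V}(x)]\subseteq\mathcal{A}$ and $[\mathcal{A},\mathcal{V}(x)]\subseteq\mathcal{V}(x)$. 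First I would pick a Levi decomposition $\mathcal{H}=\mathcal{S}\ltimes\mathcal{R}$ with $\mathcal{R}=Rad(\mathcal{H})$ and the Levi factor $\mathcal{S}$ chosen to contain the semisimple subalgebra $\mathcal{G}(x)$, which is possible by \cite[Appendix B]{Kn}. Being an ideal, $\mathcal{R}$ is $ad(\mathcal{G}(x))$-stable, hence a $\g$-submodule of $\mathcal{H}=\mathcal{G}(x)\oplus\mathcal{H}_0(x)\oplus\mathcal{V}(x)$; since the adjoint type and the type of $\mathbb{R}^{4k,4l}$ each occur with multiplicity one in $\mathcal{H}$, any $\g$-submodule equals the direct sum of its intersections with the three summands, and because a nonzero semisimple subalgebra cannot lie inside the solvable $\mathcal{R}$ we get $\mathcal{G}(x)\not\subseteq\mathcal{R}$. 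Hence $\mathcal{R}=L'\oplus W$ with $L'\subseteq\mathcal{H}_0(x)$ a subspace and $W\in\{0,\mathcal{V}(x)\}$.

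The three cases of the statement correspond exactly to the possibilities for $\mathcal{R}$. If $W=\mathcal{V}(x)$ and $L'=\mathcal{H}_0(x)$ then $Rad(\mathcal{H})=\mathcal{H}_0(x)\oplus\mathcal{V}(x)$, which is case (1). If $W=\mathcal{V}(x)$ but $L'\subsetneq\mathcal{H}_0(x)$, then since $\mathcal{V}(x)\subseteq\mathcal{R}$ and $\mathcal{R}$ is a subalgebra one has $[\mathcal{V}(x),\mathcal{V}(x)]\subseteq\mathcal{R}\cap\mathcal{A}=L'$, so that $\mathcal{I}:=\mathcal{H}_0(x)\oplus\mathcal{V}(x)$ is an ideal of $\mathcal{H}$ containing $\mathcal{R}$; therefore $\mathcal{I}/\mathcal{R}\cong\mathcal{H}_0(x)/L'$ is a nonzero ideal of the semisimple algebra $\mathcal{H}/\mathcal{R}$, hence a nonzero semisimple Lie algebra of dimension at most $3$, which forces $\dim(\mathcal{H}_0(x)/L')=3$, i.e. $L'=0$, $\mathcal{R}=\mathcal{V}(x)$ and $[\mathcal{V}(x),\mathcal{V}(x)]=0$. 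Thus $\mathcal{V}(x)$ is abelian and $\mathcal{G}(x)\oplus\mathcal{V}(x)$ is a subalgebra of $\mathcal{H}$ with $\mathcal{V}(x)$ an abelian ideal, which is case (2).

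It remains to treat $W=0$, i.e. $\mathcal{R}=L'\subseteq\mathcal{H}_0(x)\subseteq\mathcal{A}$. Then the symmetric pair $(\mathcal{H},\mathcal{A})$ descends to a symmetric pair $(\mathcal{H}/\mathcal{R},\mathcal{A}/\mathcal{R})$ with $\mathcal{H}/\mathcal{R}$ semisimple, even part $\mathcal{A}/\mathcal{R}=\mathcal{G}(x)\oplus(\mathcal{H}_0(x)/\mathcal{R})$ and odd part $\mathcal{V}(x)$, which is already irreducible over $\mathcal{G}(x)$. A short argument using this irreducibility rules out $\mathcal{H}/\mathcal{R}$ being a sum of two simple ideals exchanged by the involution (that would force the odd part to be $\g$-isomorphic to $\mathcal{G}(x)$, impossible since $\mathbb{R}^{4k,4l}$ and the adjoint module have different highest weights, $\omega_1\ne 2\omega_1$), so $\mathcal{H}/\mathcal{R}$ is simple and $(\mathcal{H}/\mathcal{R},\mathcal{A}/\mathcal{R})$ is exactly of the type treated in the proof of Lemma \ref{L4.1}. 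Berger's classification \cite{Be} then gives $\mathcal{H}/\mathcal{R}\cong\mathfrak{sp}(k+1,l)$ or $\mathfrak{sp}(k,l+1)$ with $\mathcal{H}_0(x)/\mathcal{R}\cong\mathfrak{sp}(1)$ of dimension $3$; since $\dim\mathcal{H}_0(x)\le 3$ this forces $\mathcal{R}=0$, and we obtain the isomorphism of symmetric pairs $(\mathcal{H},\mathcal{G}(x)\oplus\mathcal{H}_0(x))\cong(\mathfrak{h},\mathfrak{sp}(k,l)\oplus\mathfrak{sp}(1))$ of case (3). The only substantive external input, just as in Lemma \ref{L4.1}, is Berger's classification; everything else is bookkeeping of $\g$-module types together with the constraint $\dim\mathcal{H}_0(x)\le 3$, and the step most in need of care is keeping track of which isotypic pieces $Rad(\mathcal{H})$ can contain and ruling out a spurious extra semisimple factor in the quotient.
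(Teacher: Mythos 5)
Your overall strategy (Levi decomposition plus $\g$-module bookkeeping, with Berger's classification as the only external input) is the same as the paper's; the paper organizes the case analysis around the Levi factor containing $\mathcal{G}(x)$, while you organize it around $Rad(\mathcal{H})$, which is essentially dual bookkeeping. Your treatment of the cases $W=\mathcal{V}(x)$ is correct, and the argument forcing $L'=0$ via the ideal $\mathcal{H}_0(x)\oplus\mathcal{V}(x)$ modulo $\mathcal{R}$ is a clean way to get case (2).

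There is, however, a genuine gap in the case $W=0$: your justification that $\mathcal{H}/\mathcal{R}$ is simple only excludes the configuration of two simple ideals interchanged by the involution, but that is not the configuration that threatens simplicity here. Since the even part of the quotient pair is $\mathcal{G}(x)\oplus(\mathcal{H}_0(x)/\mathcal{R})$ with second summand of dimension at most $3$, the dangerous possibility is a decomposition $\mathcal{H}/\mathcal{R}=I_1\oplus I_2$ with $I_1$ a $3$-dimensional simple ideal of trivial $\g$-type sitting inside the even part and $I_2$ a complementary ideal isomorphic as a $\g$-module to $\mathcal{G}(x)\oplus\mathcal{V}(x)$. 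Module bookkeeping alone does not exclude this: one first checks $I_2$ must be simple (neither $\mathcal{G}(x)$ nor $\mathcal{V}(x)$ can split off as an ideal, the latter because it would then be an abelian ideal of a semisimple algebra), and then one needs the Berger input again, namely that there is no symmetric pair with simple total algebra, even part exactly $\mathfrak{sp}(k,l)$ (i.e. $L=0$), and odd part isomorphic to $\mathbb{R}^{4k,4l}$; this is exactly how the paper argues, invoking the last statement of Lemma \ref{L4.1}, which forces $L\cong\mathfrak{sp}(1)\neq 0$ for such a simple pair. Your parenthetical about the swapped-pair case (odd part would be $\g$-isomorphic to the even part, impossible since $\mathcal{V}(x)$ has type $\omega_1$) is correct but does not cover this scenario, so as written the step ``so $\mathcal{H}/\mathcal{R}$ is simple'' is unsupported. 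Once this configuration is excluded as above, the rest of your case $W=0$ (Berger giving $\mathcal{H}_0(x)/\mathcal{R}\cong\mathfrak{sp}(1)$ of dimension $3$, hence $\mathcal{R}=0$, and the symmetric-pair isomorphism via Lemma \ref{LA.2}) goes through.
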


\begin{proof} 
If $\mathcal{V}(x)$ is not a trivial $\mathcal{G}(x)$-module, then Lemma \ref{L4.1} tells us that $(\mathcal{H} , \mathcal{G}(x) \oplus \mathcal{H}_0(x) )$ is a symmetric pair. Take $\mathfrak{s}$ a Levi factor of $\mathcal{H}$ containing $\mathcal{G}(x)$, if 
	\[ \mathfrak{s} = \mathcal{G}(x) \oplus \mathbb{R}^s, \qquad s \leq 3, \]
as a $\mathcal{G}(x)$-module, then $\mathbb{R}^s$ is a simple ideal by Remark \ref{R4.2}, only possible when $s=0$ or $s = 3$ and $s = 0$ corresponds to the case where $Rad(\mathcal{H}) = \mathcal{H}_0(x) \oplus \mathcal{V}(x)$. If $s = 3$, then $Rad(\mathcal{H}) = \mathcal{V}(x)$ is an ideal such that $[\mathcal{V}(x), \mathcal{V}(x)] = 0$ by Corollary \ref{C1.4} and Remark \ref{R4.2}, in particular $\mathcal{G}(x) \oplus \mathcal{V}(x)$ is a Lie subalgebra with $\mathcal{V}(x)$ an abelian ideal. 
 
If on the other hand $\mathfrak{s} = \mathcal{G}(x) \oplus \mathcal{V}(x) \oplus \mathbb{R}^s$ as a $\mathcal{G}(x)$-module, then $(\mathfrak{s} , \mathcal{G}(x) \oplus \mathbb{R}^s )$ is a symmetric pair with $\mathfrak{s}$ semisimple. Suppose $I \subset \mathfrak{s}$ is a proper simple ideal not containing $\mathcal{G}(x)$, then $I$ is a trivial $\mathcal{G}(x)$-module of dimension bounded by $3$ and this is only possible when $I = \mathbb{R}^s$, but in this case $\mathcal{G}(x) \oplus \mathcal{V}(x)$ is another simple ideal and this is impossible by Lemma \ref{L4.1}. So $\mathfrak{s}$ is a simple Lie algebra such that $(\mathfrak{s} , \mathcal{G}(x) \oplus \mathbb{R}^s ) $ is a symmetric pair, again by Lemma \ref{L4.1} we have that $\mathfrak{s} = \mathcal{H}$ is isomorphic to either $\mathfrak{sp}(k+1,l)$ or $\mathfrak{sp}(k,l+1)$ and in particular $\mathcal{H}_0 \cong \mathfrak{sp}(1)$.
\end{proof}

\begin{lema}\label{L4.3}
	If the centralizer $\mathcal{H}$ is not a simple Lie algebra, then the normal distribution $T \mathcal{O}^\perp$ is integrable.
\end{lema}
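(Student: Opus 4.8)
The plan is to split according to whether $\mathcal{V}(x)$ is a trivial $\mathcal{G}(x)$-module and, in the non-trivial case, to feed the Lie-algebraic dichotomy of Lemma \ref{L4.2} into a direct integral-manifold construction. First I would fix $x$ in an open dense subset of $U$ on which the structures of Remark \ref{R4.1} and Proposition \ref{P4.2} hold uniformly, and recall $\mathcal{H}=\mathcal{G}(x)\oplus\mathcal{H}_0(x)\oplus\mathcal{V}(x)$ with $ev_x(\mathcal{V}(x))=T_x\mathcal{O}^\perp$ and $\mathcal{H}_0(x)=\mathrm{Ker}(ev_x)$. If $\mathcal{V}(x)$ is a trivial $\mathcal{G}(x)$-module, then $T_x\mathcal{O}^\perp=ev_x(\mathcal{V}(x))$ is a trivial $\g$-module on an open set and Proposition \ref{P4.2}(4) already gives integrability of $T\mathcal{O}^\perp$ (this branch uses nothing about simplicity). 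So from now on assume $\mathcal{V}(x)$ is not a trivial $\mathcal{G}(x)$-module, which is exactly the hypothesis under which Lemmas \ref{L4.1} and \ref{L4.2} apply: $\mathcal{V}(x)\cong\mathbb{R}^{4k,4l}$, $\mathcal{H}_0(x)$ is trivial of dimension $\leq 3$, and $(\mathcal{H},\mathcal{G}(x)\oplus\mathcal{H}_0(x))$ is a symmetric pair.

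The crux is to show $[\mathcal{V}(x),\mathcal{V}(x)]\subseteq\mathcal{H}_0(x)$. Since $\mathcal{H}$ is not simple, case (3) of Lemma \ref{L4.2} is excluded, so either $Rad(\mathcal{H})=\mathcal{H}_0(x)\oplus\mathcal{V}(x)$ or $\mathcal{G}(x)\oplus\mathcal{V}(x)$ is a subalgebra with $\mathcal{V}(x)$ abelian. In the second case $[\mathcal{V}(x),\mathcal{V}(x)]=0$. In the first case $[\mathcal{V}(x),\mathcal{V}(x)]\subseteq Rad(\mathcal{H})=\mathcal{H}_0(x)\oplus\mathcal{V}(x)$ because the radical is an ideal containing $\mathcal{V}(x)$; on the other hand, decomposing $\Lambda^2\mathcal{V}(x)$ as in Corollary \ref{C1.4} one sees it has no summand whose complexification is $V(\omega_1)$, i.e. no summand isomorphic to $\mathcal{V}(x)$, so Remark \ref{R4.2} forces $[\mathcal{V}(x),\mathcal{V}(x)]\subseteq\mathcal{G}(x)\oplus\mathcal{H}_0(x)$. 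Intersecting the two inclusions gives $[\mathcal{V}(x),\mathcal{V}(x)]\subseteq\mathcal{H}_0(x)$.

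Next I would check that $\mathfrak{l}_x:=\mathcal{H}_0(x)\oplus\mathcal{V}(x)$ is a Lie subalgebra of $\mathcal{H}\subseteq Kill(\widetilde{M})$: since $\mathcal{H}_0(x)$ is a trivial $\mathcal{G}(x)$-module, Schur's Lemma \ref{Schur} applied to isotypic components gives $[\mathcal{H}_0(x),\mathcal{H}_0(x)]\subseteq\mathcal{H}_0(x)$ and $[\mathcal{H}_0(x),\mathcal{V}(x)]\subseteq\mathcal{V}(x)$, and the previous paragraph gives $[\mathcal{V}(x),\mathcal{V}(x)]\subseteq\mathcal{H}_0(x)$. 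Now every element of $\mathfrak{l}_x$ centralizes the $G$-action, so its flow commutes with $G$, hence permutes the leaves of $\mathcal{O}$ and, being an isometry, preserves $T\mathcal{O}^\perp$; and since $\mathfrak{l}_x$ is a subalgebra, its local pseudogroup preserves $\mathfrak{l}_x$. By the orbit theorem, the orbit $\Sigma$ of $\mathfrak{l}_x$ through $x$ is an immersed submanifold with $T_y\Sigma=ev_y(\mathfrak{l}_x)$ for all $y\in\Sigma$; writing $y=\phi(x)$ for $\phi$ in the pseudogroup and using $\phi_*\mathfrak{l}_x=\mathfrak{l}_x$, we get $ev_y(\mathfrak{l}_x)=\phi_*(ev_x(\mathfrak{l}_x))=\phi_*(T_x\mathcal{O}^\perp)=T_y\mathcal{O}^\perp$, since $ev_x(\mathfrak{l}_x)=ev_x(\mathcal{V}(x))=T_x\mathcal{O}^\perp$ and $\phi$ preserves $T\mathcal{O}^\perp$. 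Thus $\Sigma$ is an integral manifold of $T\mathcal{O}^\perp$ through $x$. As $x$ ranges over an open dense set this produces integral manifolds through a dense set of points; since $\omega$ vanishes on $T\mathcal{O}^\perp$ it vanishes on each such $\Sigma$, so $\Omega_x=d\omega_x|_{\Lambda^2 T_x\mathcal{O}^\perp}=0$ there, and by analyticity of $\Omega$ we conclude $\Omega\equiv 0$, i.e. $T\mathcal{O}^\perp$ is integrable.

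I expect the main obstacle to be the structural step of the second paragraph: one must genuinely exploit the global Lie-theoretic classification from Lemma \ref{L4.2} (ultimately Berger's list of simple symmetric pairs) to upgrade the a priori containment $[\mathcal{V}(x),\mathcal{V}(x)]\subseteq\mathcal{G}(x)\oplus\mathcal{H}_0(x)$, coming merely from the $\g$-module decomposition of $\Lambda^2\mathcal{V}(x)$, to $[\mathcal{V}(x),\mathcal{V}(x)]\subseteq\mathcal{H}_0(x)$; the non-simplicity hypothesis is used precisely to kill the $\mathcal{G}(x)$-component. A secondary technical point is making the orbit argument precise — the orbit $\Sigma$ is only immersed, and one must justify that the relevant module structures and the decomposition of $\mathcal{H}$ are uniform on a genuine open dense subset of $U$, which is the same analyticity/semicontinuity consideration already invoked in Proposition \ref{P4.2}(4).
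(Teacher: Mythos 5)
Your argument is correct, and in the decisive step it takes a genuinely different route from the paper. Both proofs share the same skeleton: dispose of the case where $\mathcal{V}(x)$ is a trivial $\mathcal{G}(x)$-module via Proposition \ref{P4.2}(4), and in the non-trivial case use non-simplicity to restrict to cases (1)--(2) of Lemma \ref{L4.2}. From there the paper argues globally: in each of the two cases it integrates the relevant subalgebra to an honest right action of a simply connected group on $\widetilde{M}$ (the semidirect products $G \ltimes V$, resp. $G \ltimes R$, using completeness, closedness of $H_0$ in $R$ via Chevalley, etc.), builds the orbit map $f$ and exploits the product structure of the semidirect product to exhibit a whole family $N_g$ of integral manifolds of $T\mathcal{O}^\perp$ near $x$, so that $\Omega$ vanishes on an open set and analyticity finishes. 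You instead extract the common algebraic content of the two cases --- intersecting $[\mathcal{V}(x),\mathcal{V}(x)] \subseteq \mathcal{G}(x)\oplus\mathcal{H}_0(x)$ (from the decomposition of $\Lambda^2\mathcal{V}(x)$ in Corollary \ref{C1.4} plus Remark \ref{R4.2}) with the radical to get $[\mathcal{V}(x),\mathcal{V}(x)] \subseteq \mathcal{H}_0(x)$ --- so that $\mathcal{H}_0(x)\oplus\mathcal{V}(x)$ is a subalgebra of Killing fields whose kernel under $ev_x$ is exactly $\mathcal{H}_0(x)$, and then apply Sussmann's orbit theorem to its local flows to produce an integral manifold of $T\mathcal{O}^\perp$ through $x$, giving $\Omega_x=0$ pointwise and concluding by density and continuity/analyticity. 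Your version buys locality and economy: no global integration of the group, no completeness or closedness issues, and a single structural identity doing the work; the paper's version buys a stronger local conclusion ($\Omega\equiv 0$ on an open neighborhood, with explicit leaves) and reuses machinery (the right $H$-action, the orbit map $f$) that is needed anyway in the proof of Theorem \ref{main4}. One small wording issue: triviality of $\mathcal{V}(x)$ at the single fixed point does not by itself give triviality on an open set, so your first branch should either be phrased as the paper's dichotomy (trivial on some open subset, or else some $x\in U$ is non-trivial) or handled pointwise, noting that at a trivial point $\Omega_x(\Lambda^2 T_x\mathcal{O}^\perp)$ is a trivial submodule of the irreducible module $\g$ and hence zero --- exactly the pointwise step inside the proof of Proposition \ref{P4.2}(4) --- which fits your pointwise scheme and is the patch you already anticipate in your closing remarks.
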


\begin{proof}
By Proposition \ref{P4.2}, if $\mathcal{V}(x)$ is a trivial $\mathcal{G}(x)$-module for every $x$ in an open subset, then $T \mathcal{O}^\perp$ is integrable. Suppose that there exists a point $x \in U$ such that $\mathcal{V}(x)$ is a non-trivial $\mathcal{G}(x)$-module, so that if $\mathcal{H}$ is not a simple Lie algebra, by Lemma \ref{L4.2} there are two possibilities:

$\mathfrak{s} = \mathcal{G}(x) \oplus \mathcal{V}(x) \subset \mathcal{H}$ is a subalgebra having $\mathcal{V}(x)$ as an abelian ideal, in such a case we can take the semidirect product $S = G \ltimes V$, where $V = \mathcal{V}(x)$ is considered as an abelian Lie group and $G = Sp(k,l)$ acts in $V$ by integrating the irreducible $\mathcal{G}(x)$-module structure so that $S$ is a simply connected Lie group with Lie algebra $\mathfrak{s}$. As $\widetilde{M}$ is complete, there exists a right action of $S$ into $\widetilde{M}$ so that if we consider the orbit map
	\begin{displaymath}
		f : S \rightarrow \widetilde{M}, \qquad f(h) = x \cdot h,
	\end{displaymath}
then \begin{equation}\label{eq-4.2} df_e(X) = \psi(X), \qquad \forall \ X \in \mathfrak{s}, \end{equation}
where $\psi : \mathfrak{s} \rightarrow Kill(\widetilde{M})$ is the inclusion homomorphism. Property (\ref{eq-4.2}) tells us that
	\begin{displaymath}
		df_e (\mathcal{G}(x)) = T_x \mathcal{O}, \qquad  df_e(V) = T_x \mathcal{O}^\perp,
	\end{displaymath}
and in particular, $f$ is a local diffeomorphism around $e \in L$. For every $\eta \in S$, we denote the two diffeomorphisms induced by right multiplication in both $\widetilde{M}$ and $S$ as
	\begin{displaymath}
		R_\eta : S \rightarrow S, \qquad R_\eta : \widetilde{M} \rightarrow \widetilde{M},
	\end{displaymath}
so that we have $f = R_\eta \circ f \circ R_{\eta^{-1}}$, this can be seen directly from the definition of $f$. The isometric right $S$-action commutes with the left $G$-action because $\mathfrak{s}$ belongs to the centralizer $\mathcal{H}$ so that the $S$-action preserves the distributions $T \mathcal{O}$ and $T \mathcal{O}^\perp$, this implies that for every $g \in G$ and $w \in V$, we have
	\begin{equation}\label{eq-4.3}
		d (R_w)_x (T_x \mathcal{O}^\perp) = T_{x \cdot w} \mathcal{O}^\perp, \qquad d(R_g)_x (T_x \mathcal{O}^\perp) = T_{x \cdot g} \mathcal{O}^\perp,
	\end{equation}
on the other hand the semidirect product structure of $S$ implies that for every $g,h \in G$ and $w \in V$ we have
	\begin{equation}\label{eq-4.4}
		R_w (\{h\} \times V) = \{h\} \times V, \qquad R_g(\{h\} \times V) = \{hg\} \times V.
	\end{equation}
Relations (\ref{eq-4.3}) and (\ref{eq-4.4}) together with the right equivariance $d f_\eta = d (R_\eta)_x \circ d f_e \circ d (R_{\eta^{-1}})_\eta$ implies that there is a family of manifolds parametrized by elements in a neighborhood of $e \in G$ given by
	\begin{displaymath}
		N_g = f(\{g\} \times V), \qquad T_{f(g,w)} N_g = T_{f(g,w)} \mathcal{O}^\perp,
	\end{displaymath}
so that $\Omega \equiv 0$ in a neigborhood of $x$, and thus $T \mathcal{O}^\perp$ is integrable.

The second possibility is that $\mathcal{R} = \mathcal{H}_0(x) \oplus \mathcal{V}(x)$ is the maximal solvable ideal of $\mathcal{H}$, and then the semidirect product $H = G \ltimes R$ is the simply connected Lie group with Lie algebra $\mathcal{H}$, where $R$ is the simply connected Lie group with Lie algebra $\mathcal{R}$. Take $H_0 \leq R$ the connected analytic subgroup associated to the Lie subalgebra $\mathcal{H}_0(x)$, then $H_0$ is a closed subgroup in $R$ because $R$ is simply connected and solvable, also $R$ is closed in $H$ because $\mathcal{R}$ is an ideal in $\mathcal{H}$ (for these two facts see \cite{Che} or \cite[Pg. 152]{He}), then $H_0$ is also closed as a subgroup of $H$. Consider the isometric right action of $H$ into $\widetilde{M}$ as in the previous case and the induced orbit map in the quotient
	\[ f : H_0 \backslash H \rightarrow \widetilde{M}, \qquad f(H_0 \ h) = x \cdot h, \]
that is well defined because elements of $H_0$ fix $x$. As before we have that 
	\[ df_{H_0 e}(\mathcal{G}(x)) = T_x \mathcal{O}, \qquad df_{H_0 e}(\mathcal{V}(x)) = T_x \mathcal{O}^\perp, \]
and therefore $f$ is a local diffeomorphism around $H_0 e$, moreover $f$ is again right $H$-equivariant, so $f = R_\eta \circ f \circ R_{\eta^{-1}}$ for every $\eta \in H$. The isometric right $H$-action commutes with the left $G$-action because $\mathcal{H}$ is the centralizer of the action so that the $H$-action preserves the distributions $T \mathcal{O}$ and $T \mathcal{O}^\perp$, this implies that for every $h \in H$, we have
	\begin{equation}\label{eq-4.5}
		d (R_h)_x (T_x \mathcal{O}^\perp) = T_{x \cdot h} \mathcal{O}^\perp.
	\end{equation}

If we denote $H_0^h := h^{-1} H_0 h$, then $H_0^g \subset R$ for every $g \in G$ and we have a family of submanifolds of $H_0 \backslash H$
	\[ V_g := \{g \} \times (H_0^g) \backslash R \subset H_0 \backslash H, \qquad g \in G, \]
and the semidirect product structure of $H$ implies that for every $g,h \in G$ and $r \in R$ we have
	\begin{equation}\label{eq-4.6}
		R_r ( V_h ) = V_h, \qquad R_g( V_h ) = V_{hg}.
	\end{equation}
Relations (\ref{eq-4.5}) and (\ref{eq-4.6}) together with the right $H$-equivariance $d f_\eta = d (R_\eta)_x \circ d f_e \circ d (R_{\eta^{-1}})_\eta$ implies that there is a family of manifolds parametrized by elements in a neighborhood of $e \in G$ given by
	\begin{displaymath}
		N_g = f( V_g ), \qquad T_{ f(g,r) } N_g = T_{f(g,r)} \mathcal{O}^\perp,
	\end{displaymath}
$N_g$ is a family of integral manifolds of $T \mathcal{O}^\perp$ in a neigborhood of $x$, which implies that $\Omega \equiv 0$ in an non-empty open set and thus $T \mathcal{O}^\perp$ is integrable.
\end{proof}

We can summarize in the following Lemma the properties of the centralizer obtained so far in the case where $T \mathcal{O}^\perp$ is a non-integrable distribution.

\begin{lema}\label{L4.4}
If $T \mathcal{O}^\perp$ is non-integrable, then $\mathcal{H} \cong \h$ for $\h$ either $\mathfrak{sp}(k+1,l)$ or $\mathfrak{sp}(k,l+1)$. Moreover, if $U$ is as in Remark \ref{R4.1}, for every $x \in U \subset \widetilde{M}$ it is possible to choose an isomorphism $\Psi : \mathcal{H} \rightarrow \h$ so that 
	\[ \Psi(\mathcal{G}(x) ) = \mathfrak{sp}(k,l), \quad \Psi(\mathcal{H}_0(x) ) = \mathfrak{sp}(1), \quad \Psi(\mathcal{V}(x) ) = \mathbb{H}^{k,l}, \]
where the decomposition $\h = \mathfrak{sp}(k,l) \oplus \mathfrak{sp}(1) \oplus \mathbb{H}^{k,l}$ is the one given in Proposition \ref{PA.1}.
\end{lema}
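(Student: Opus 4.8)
The plan is to combine the rigidity results of Section~\ref{Def} with what has already been proved about $\mathcal{H}$; the Lemma is essentially a recognition statement once one knows that $\mathcal{H}$ is simple.

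First I would record that $\mathcal{H}$ is simple: this is exactly the contrapositive of Lemma~\ref{L4.3}, given the hypothesis that $T\mathcal{O}^\perp$ is non-integrable. Next, by part~(4) of Proposition~\ref{P4.2}, non-integrability of $T\mathcal{O}^\perp$ rules out $\mathcal{V}(y)\cong T_y\mathcal{O}^\perp$ being a trivial $\mathcal{G}(y)$-module simultaneously for all $y$ in any open subset of $U$; in particular there is at least one $x_0\in U$ at which $\mathcal{V}(x_0)$ is a non-trivial $\mathcal{G}(x_0)$-module. Applying Lemma~\ref{L4.1} at $x_0$ with $\mathcal{S}=\mathcal{H}$ (simple by the previous remark) then gives $\mathcal{H}_0(x_0)\cong\mathfrak{sp}(1)$, that $(\mathcal{H},\mathcal{G}(x_0)\oplus\mathcal{H}_0(x_0))$ is a symmetric pair, and that $\mathcal{H}$ is isomorphic to $\h$, one of $\mathfrak{sp}(k+1,l)$ or $\mathfrak{sp}(k,l+1)$. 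This settles the first assertion.

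For the ``moreover'' part I would first upgrade the non-triviality of $\mathcal{V}$ to hold at every point. Fix an arbitrary $x\in U$ and suppose $\mathcal{V}(x)$ were a trivial $\mathcal{G}(x)$-module. Since $\rho_x(\g)=\mathcal{G}(x)$ and $\mathcal{H}_0(x)$ both lie in $Kill_0(\widetilde{M},x)$ and $\lambda_x$ is a Lie algebra homomorphism, the restricted isotropy map $\lambda_0:\mathcal{H}_0(x)\hookrightarrow\mathfrak{so}(T_x\mathcal{O}^\perp)$ intertwines the adjoint action of $\mathcal{G}(x)$ on $\mathcal{H}_0(x)$ with the action induced on $\mathfrak{so}(T_x\mathcal{O}^\perp)$; triviality of $T_x\mathcal{O}^\perp$ as a $\mathcal{G}(x)$-module then forces $\mathcal{H}_0(x)$ to be a trivial $\mathcal{G}(x)$-module, so $\mathcal{H}_0(x)\oplus\mathcal{V}(x)$ centralizes $\mathcal{G}(x)$ and $\mathcal{G}(x)$ is a non-zero proper ideal of the simple algebra $\mathcal{H}$ --- a contradiction. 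Hence $\mathcal{V}(x)$ is non-trivial for every $x\in U$. For such an $x$ it is a real $\mathcal{G}(x)$-module of dimension $\dim T_x\mathcal{O}^\perp\le n(2n+5)-n(2n+1)=4n$, carrying the $\mathcal{G}(x)$-invariant inner product induced by the non-degenerate metric on $T_x\mathcal{O}^\perp$, so Corollary~\ref{C1.4} applies: $\mathcal{V}(x)$ is irreducible of highest weight $\omega_1$ and the centralizer of $\mathcal{G}(x)$ inside $\mathfrak{so}(\mathcal{V}(x))$ is the $3$-dimensional copy of $\mathfrak{sp}(1)$ appearing in that decomposition. As $\lambda_0(\mathcal{H}_0(x))\cong\mathfrak{sp}(1)$ lies in this centralizer and has the same dimension, it fills it out, so $\mathcal{V}(x)\cong\mathbb{H}^{k,l}$ as a $\mathcal{G}(x)\oplus\mathcal{H}_0(x)$-module.

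Finally, with $\mathcal{H}$ simple, $\mathcal{G}(x)$ in the role of $\g$, $\mathcal{H}_0(x)$ in the role of $\mathfrak{k}$ and $\mathcal{V}(x)\cong\mathbb{H}^{k,l}$ in the role of $V$, Lemma~\ref{LA.2} applies and yields an isomorphism $\Psi:\mathcal{H}\to\h$ that is an isomorphism of symmetric pairs $(\mathcal{H},\mathcal{G}(x)\oplus\mathcal{H}_0(x))\cong(\h,\mathfrak{sp}(k,l)\oplus\mathfrak{sp}(1))$, the latter via the block inclusions (\ref{inclusion1}) and (\ref{inclusion2}); in other words $\Psi(\mathcal{G}(x))=\mathfrak{sp}(k,l)$, $\Psi(\mathcal{H}_0(x))=\mathfrak{sp}(1)$ and $\Psi(\mathcal{V}(x))=\mathbb{H}^{k,l}$ for the decomposition of Proposition~\ref{PA.1}, which is exactly the claim. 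The step I expect to need the most care is the passage to \emph{every} $x\in U$: all the representation-theoretic input (Corollary~\ref{C1.4}, Lemmas~\ref{L4.1} and~\ref{LA.2}) is pointwise, and the only genuinely new point is the ideal argument excluding a trivial $\mathcal{V}(x)$, which hinges on the $\mathcal{G}(x)$-equivariance of $\lambda_0$ and on the dimension bookkeeping that leaves $\mathbb{H}^{k,l}$ as the only admissible non-trivial module of dimension $\le 4n$.
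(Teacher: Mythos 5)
Your proposal is correct and takes essentially the same route as the paper: simplicity of $\mathcal{H}$ via Lemma \ref{L4.3}, exclusion of a trivial $\mathcal{V}(x)$ at any $x \in U$ by producing a nonzero proper ideal in the simple algebra $\mathcal{H}$, and then recognition of the symmetric pair $(\mathcal{H}, \mathcal{G}(x)\oplus\mathcal{H}_0(x))$ through the representation-theoretic machinery of Section \ref{Symplectic}. The only cosmetic difference is that you bypass Lemma \ref{L4.2} and argue pointwise from Lemma \ref{L4.1} together with an explicit identification, via Corollary \ref{C1.4}, of $\mathcal{V}(x)\cong\mathbb{H}^{k,l}$ as a $\mathcal{G}(x)\oplus\mathcal{H}_0(x)$-module before invoking Lemma \ref{LA.2} --- a step the paper leaves implicit in its citation of that lemma.
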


\begin{proof}
If $T \mathcal{O}^\perp$ is non-integrable, then by Lemma \ref{L4.3}, $\mathcal{H}$ is a simple Lie algebra. Take a point $x \in U$ and observe that if $\mathcal{V}(x)$ is a trivial $\mathcal{G}(x)$-module, then $\mathcal{H}_0(x) \oplus \mathcal{V}(x)$ is a trivial module as well, this is a consequence of Proposition \ref{P4.2} and Remark \ref{R4.2}, which implies that $\mathcal{H}_0(x) \oplus \mathcal{V}(x)$ is an ideal of $\mathcal{H}$, an impossibility because $\mathcal{H}$ is simple. Now Lemma \ref{L4.2} applies and simplicity of $\mathcal{H}$ implies that there is an isomorphism of symmetric pairs
	\[	\left(\mathcal{H}, \mathcal{G}(x) \oplus \mathcal{H}_0(x) \right) \cong ( \mathfrak{h} , \mathfrak{sp}(k,l) \oplus \mathfrak{sp}(1) ), \]
with $\mathfrak{h}$ either $\mathfrak{sp}(k+1,l)$ or $\mathfrak{sp}(k,l+1)$, this for every $x \in U$, the last assertion follows from Theorem \ref{LA.2}.
\end{proof}

\subsection{Proof of Theorem \ref{main4}}

Lemma \ref{L4.4} tells us that an important feature that determines the structure of the centralizer algebra $\mathcal{H}$ is whether the normal distribution $T \mathcal{O}^\perp$ is integrable or not. The case when $T \mathcal{O}^\perp$ is integrable has already been studied and we have the following result that is a particular case of Theorem 1.1 in  \cite{Q2}

\begin{teo}\label{T4.2} If $T \mathcal{O}^\perp$ is integrable then there exist:
	
\begin{enumerate}
\item an isometric finite covering map $\widehat{M} \rightarrow M$ to which the $G$-action lifts,

\item a simply connected complete pseudo-Riemannian manifold $\widetilde{N}$, and

\item a discrete subgroup $\Gamma \subset G \times Iso(\widetilde{N})$,
	\end{enumerate}
such that $\widehat{M}$ is $G$-equivariantly isometric to $\left( G \times \widetilde{N} \right) / \Gamma$.
\end{teo}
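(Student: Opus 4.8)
The plan is to show that, once $T\mathcal{O}^\perp$ is integrable, the simply connected cover $\widetilde{M}$ splits as a pseudo-Riemannian product whose first factor carries the transitive $G$-action, and then to descend this splitting to $M$ through a finite covering. The crux is to upgrade the two complementary, orthogonal, non-degenerate foliations $\mathcal{O}$ and $\mathcal{O}^\perp$ to a metric product, i.e. to prove that both $T\mathcal{O}$ and $T\mathcal{O}^\perp$ are parallel for the Levi--Civita connection $\nabla$. I would begin with the leaves of $\mathcal{O}$. Fix $x\in U$ and, using Theorem \ref{T4.1} together with $ev_x(\mathcal{V}(x))=T_x\mathcal{O}^\perp$, choose centralizer Killing fields $W\in\mathcal{H}$ whose values span $T\mathcal{O}^\perp$ near $x$. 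Since $W$ commutes with every fundamental field $X^*$ and is Killing, its flow preserves $X^*$ and the metric, so $W\,g(X^*,Y^*)=0$; and $g([X^*,Y^*],W)=0$ because $[X^*,Y^*]=-[X,Y]^*$ is tangent to $\mathcal{O}$. Substituting these into the Koszul formula for $2g(\nabla_{X^*}Y^*,W)$ makes every term vanish, so $(\nabla_{X^*}Y^*)^{T\mathcal{O}^\perp}=0$ and the orbit leaves are totally geodesic.

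The decisive step is the normal leaves. For $V,W\in\mathcal{V}(x)\subset\mathcal{H}$ and a fundamental field $X^*$, the same two facts give $X^*\,g(V,W)=0$ and $[V,X^*]=[W,X^*]=0$, while $g(V,X^*)=g(W,X^*)=0$ by orthogonality of the two distributions. Koszul then collapses to $2g(\nabla_V W,X^*)=g([V,W],X^*)$. Here integrability enters through $\omega$: since $\omega(V)=\omega(W)=0$, the structure equation yields $\Omega_x(V\wedge W)=d\omega(V,W)=-\omega([V,W])$, so $\Omega\equiv0$ forces the $\g$-component $\omega([V,W])$ to vanish; hence the $T\mathcal{O}$-part of $[V,W]$ is zero, $g([V,W],X^*)=0$, and $(\nabla_V W)^{T\mathcal{O}}=0$. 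Thus the leaves of $\mathcal{O}^\perp$ are totally geodesic as well. As $\widetilde{M}$ is analytic and $U$ is dense, both second fundamental forms vanish identically; and a short Koszul computation shows that two orthogonal complementary totally geodesic distributions are automatically parallel.

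With $T\mathcal{O}$ and $T\mathcal{O}^\perp$ parallel and non-degenerate (the orbits are non-degenerate) on the simply connected complete manifold $\widetilde{M}$, the pseudo-Riemannian de Rham--Wu decomposition provides a $G$-equivariant isometry $\widetilde{M}\cong\widetilde{M}_1\times\widetilde{N}$, with $\widetilde{M}_1$ the leaf tangent to $T\mathcal{O}$ and $\widetilde{N}:=\widetilde{M}_2$ the leaf tangent to $T\mathcal{O}^\perp$, which is simply connected and complete. On $\widetilde{M}_1$ the fundamental fields span the whole tangent space, so each $G$-orbit is open; by connectedness $G$ acts transitively, and since $\widetilde{M}_1$ and $G$ are both simply connected the stabilizer is trivial, giving $\widetilde{M}_1\cong G$ with a constant rescaling of the bi-invariant Killing metric. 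Finally the deck group $\pi_1(M)$ acts by isometries commuting with the lifted $G$-action, hence preserves the canonical product splitting (it preserves the $G$-orbit foliation); an isometry of $G\times\widetilde{N}$ commuting with all left translations of $G$ and preserving the factors has the form $(g,y)\mapsto(ag,\phi(y))$ with $a\in G$ and $\phi\in Iso(\widetilde{N})$. Passing to the finite-index subgroup $\Gamma\leq\pi_1(M)$ realized inside $G\times Iso(\widetilde{N})$ produces the finite covering $\widehat{M}=(G\times\widetilde{N})/\Gamma\to M$ to which the $G$-action lifts, as required.

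The main obstacle is the normal-leaf step: translating the geometric integrability condition $\Omega\equiv0$ into the algebraic statement $\omega([V,W])=0$ via the structure equation, and then invoking the Wu splitting correctly in the indefinite setting, where one must keep careful track of non-degeneracy of the leaves (guaranteed by the standing hypothesis that the $G$-orbits are non-degenerate) and of completeness, in order to obtain a genuine global product rather than merely a local one.
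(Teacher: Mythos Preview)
The paper does not give its own proof of this statement; it is quoted as a particular case of Theorem~1.1 in \cite{Q2}. Your outline is exactly the strategy used there---show that both $T\mathcal{O}$ and $T\mathcal{O}^\perp$ are totally geodesic and then invoke the pseudo-Riemannian de~Rham--Wu decomposition---so the overall plan is correct. There is, however, a genuine gap in your argument for the normal leaves.

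You choose $V,W\in\mathcal{V}(x)\subset\mathcal{H}$ and claim that $\omega(V)=\omega(W)=0$, so that Cartan's formula collapses to $d\omega(V,W)=-\omega([V,W])$ and hence $\Omega\equiv 0$ forces $\omega([V,W])_x=0$. But elements of $\mathcal{V}(x)$ lie in $T\mathcal{O}^\perp$ only at the single point $x$; away from $x$ they may acquire a nonzero $T\mathcal{O}$-component, so $\omega(V)$ vanishes at $x$ while the terms $V\omega(W)$ and $W\omega(V)$ in Cartan's formula need not. Thus you cannot read off $\omega([V,W])_x=0$ from $\Omega_x=0$ with these particular extensions, and your (correct) identity $2g(\nabla_VW,X^*)=g([V,W],X^*)$ does not finish the job. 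The repair is to abandon the centralizer fields for this step and use sections $\tilde V,\tilde W$ of $T\mathcal{O}^\perp$: then $g(\tilde W,X^*)\equiv 0$ gives $g(\nabla_{\tilde V}\tilde W,X^*)=-g(\tilde W,\nabla_{\tilde V}X^*)$, which the Killing equation for $X^*$ makes antisymmetric in $\tilde V,\tilde W$, while integrability of $T\mathcal{O}^\perp$ makes it symmetric; hence it vanishes. The same oversight appears in your orbit-leaf paragraph---the Koszul terms $X^*g(Y^*,W)$ and $Y^*g(X^*,W)$ are not addressed---but there the missing terms do vanish at $x$, since for instance $X^*g(Y^*,W)=g([X^*,Y^*],W)+g(Y^*,[X^*,W])$ and both summands are zero at $x$.
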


\begin{lema}\label{L4.5}
If $\overline{g}$ is a metric tensor obtained by rescaling $g$ constantly along $T \mathcal{O}$ and $T \mathcal{O}^\perp$, then $Vol(M,\overline{g}) < \infty$ if and only if $Vol(M,g) < \infty$.
\end{lema}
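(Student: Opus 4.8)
The plan is to show that rescaling the metric constantly along the two complementary distributions $T\mathcal{O}$ and $T\mathcal{O}^\perp$ changes the volume density only by a positive multiplicative constant, so finiteness of the total volume is preserved in both directions. First I would make precise what ``rescaling $g$ constantly along $T\mathcal{O}$ and $T\mathcal{O}^\perp$'' means: there are constants $a,b>0$ such that $\overline{g}|_{T\mathcal{O}} = a\, g|_{T\mathcal{O}}$, $\overline{g}|_{T\mathcal{O}^\perp} = b\, g|_{T\mathcal{O}^\perp}$, and the two distributions remain $\overline{g}$-orthogonal (this is automatic since they were $g$-orthogonal and we only scale within each block). Since the $G$-orbits are non-degenerate leaves, the decomposition $T\widetilde M = T\mathcal{O}\oplus T\mathcal{O}^\perp$ is a genuine $g$-orthogonal splitting into nondegenerate subbundles, and the same is true for $\overline g$.

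Next, the key computation: on any open set where we pick a local frame adapted to the splitting — say $e_1,\dots,e_r$ spanning $T\mathcal{O}$ and $e_{r+1},\dots,e_m$ spanning $T\mathcal{O}^\perp$ — the Gram matrix of $g$ is block diagonal, $\mathrm{diag}(A,C)$ with $A$ an $r\times r$ block and $C$ an $(m-r)\times(m-r)$ block, and the Gram matrix of $\overline g$ is $\mathrm{diag}(aA,bC)$. Hence $\det(\overline g\text{-Gram}) = a^{r}\, b^{\,m-r}\det(g\text{-Gram})$, so the pseudo-Riemannian volume densities satisfy $d\mathrm{vol}_{\overline g} = a^{r/2} b^{(m-r)/2}\, d\mathrm{vol}_{g}$; note $r = \dim G$ is a fixed integer independent of the point, so the constant $a^{r/2}b^{(m-r)/2}$ is a single positive number. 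I would phrase this invariantly (the density transforms by the square root of the absolute value of the determinant of the endomorphism $g^{-1}\overline g$, which here is $a\cdot\mathrm{id}$ on $T\mathcal{O}$ plus $b\cdot\mathrm{id}$ on $T\mathcal{O}^\perp$) to avoid worrying about frames, and note everything descends from $\widetilde M$ to $M$ since the rescaling is $G$- and deck-transformation-equivariant by construction. Integrating over $M$ then gives $\mathrm{Vol}(M,\overline g) = a^{r/2}b^{(m-r)/2}\,\mathrm{Vol}(M,g)$, and since the constant is strictly positive and finite, one side is finite iff the other is.

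There is no real obstacle here; the only points requiring a word of care are (i) that $T\mathcal{O}^\perp$ need not be integrable, but this is irrelevant — the volume density depends only on the pointwise metric, not on whether the distribution is a foliation; (ii) that the splitting is by nondegenerate subbundles so that the block-diagonal determinant factorization is valid and the signs work out (absolute values are taken throughout); and (iii) that $r=\dim G$ is constant so the scaling factor is a constant rather than a function. The mildest subtlety is justifying that the construction on $\widetilde M$ is $\pi_1(M)$-invariant and hence defines $\overline g$ on $M$ itself, but this follows because $T\mathcal{O}$, $T\mathcal{O}^\perp$, and the constants $a,b$ are all canonical, hence preserved by the deck group of $\widetilde M\to M$.
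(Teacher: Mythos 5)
Your proposal is correct and follows essentially the same route as the paper: both compute that the volume density changes by the constant factor $\sqrt{|c_1|^{r}|c_2|^{\,\dim M - r}}$ coming from the block-diagonal determinant, so finiteness of the volume is preserved. Your extra remarks on non-degeneracy, absolute values, and descent to $M$ are fine but not needed beyond what the paper records (note only that the paper allows the rescaling constants to be merely non-zero rather than positive).
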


\begin{proof}
By hypothesis, there exist two non-zero constants $c_1$ and $c_2$ such that the metric tensor $\overline{g}$ is obtained from $g$ by rescaling it along $T \mathcal{O}$ by $c_1$ and along $T \mathcal{O}^\perp$ by $c_2$. Now the volume form may be obtained from the metric tensor in local coordinates involving its determinant \cite[Lemma 7.19]{O}, so that we get
	\[ Vol_{\overline{g}} = \sqrt{|c_1|^r |c_2|^{N-r}} Vol_{g} \]
where $r = n(2n+1)$ and $N = n(2n+5)$. The result follows from the fact that constant rescalings on the volume form preserves finite volume.
\end{proof}

If the normal distribution $T \mathcal{O}^\perp$ is integrable, then Theorem \ref{T4.2} gives the first part of the theorem. Let us consider then the case where $T \mathcal{O}^\perp$ is non-integrable and fix a point $x \in U \subset \widetilde{M}$. If $H$ is the simply connected Lie group with Lie algebra $\mathcal{H}$, then by Lemma \ref{L4.4}, $H$ is isomorphic to either $Sp(k+1,l)$ or $Sp(k,l+1)$ and the pair $G \times H_0 \cong Sp(k,l) \times Sp(1)$ corresponds to the Lie subalgebra $\mathcal{G}(x) \oplus \mathcal{H}_0(x)$ with the adjoint representation in $\mathcal{V}(x)$ isomorphic to $\mathbb{R}^{4k,4l}$ considered with the natural embeddings as in Proposition \ref{PA.1}. As before, there exists an isometric right action $\widetilde{M} \times H \rightarrow \widetilde{M}$ such that
	\begin{equation}\label{eq-4.7} \frac{d}{dt}_{|_{t=0}} x \cdot exp(tX) = \Psi(X)_x, \end{equation}
where $\Psi : \mathcal{H} \rightarrow Kill(\widetilde{M})$ is the inclusion map. Consider the induced function 
	\[ f : H_0 \backslash H \rightarrow \widetilde{M}, \qquad f(H_0 h) = x \cdot h, \]
then $f$ is a well defined smooth map because elements of $H_0$ fix the point $x$. This function is equivariant with respect to the corresponding right actions, i.e. if $\eta \in H$ and 
	\[ R_\eta : \widetilde{M} \rightarrow \widetilde{M}, \quad R_\eta : H_0 \backslash H \rightarrow H_0 \backslash H, \]
are the corresponding diffeomorphisms induced by right multiplication, then $f = R_{\eta^{-1}} \circ f \circ R_\eta$. By property (\ref{eq-4.7}) we have that 
	\[ df_{H_0 e} (\mathcal{G}(x)) = T_x \mathcal{O}, \qquad df_{H_0 e} (\mathcal{V}(x)) = T_x \mathcal{O}^\perp, \]
so $f$ is a local diffeomorphism in a neighborhood of $H_0 e$, but by the $H$-equivariance and the fact that $R_\eta$ is a diffeomorphism for every $\eta \in H$, then $f$ is a local diffeomorphism everywhere. Observe now that under the identification $T_{H_0 e} H_0 \backslash H \cong \mathcal{G}(x) \oplus \mathcal{V}(x)$ (see Lemma \ref{L3.1}), then 
	\[df_{H_0 e} : \mathcal{G}(x) \oplus \mathcal{V}(x) \rightarrow T_x \widetilde{M} \]
is just the evaluation map so that by Remark \ref{R4.1}, the pull-back of the metric tensor $g_x$ under $df_{H_0 e}$ is a $\mathfrak{sp}(k,l)$-invariant bilinear form and by Lemma \ref{R1.4} we can rescale $g_x$ along $T_x \mathcal{O}$ and $T_x \mathcal{O}^\perp$ such that the bilinear form we obtain in $\mathcal{G}(x) \oplus \mathcal{V}(x)$ is the restriction of the Killing form in $\mathcal{H}$. Using this, the fact that the corresponding $R_\eta$ are isometries of the Killing form and the metric tensor $g$, and the fact that 
	\[ df_\eta = d (R_\eta)_{H_0 e} \circ df_{H_0 e} \circ d (R_{\eta^{-1}})_\eta, \qquad \forall \eta \in H, \]
we have that we can rescale the pseudo-Riemannian metric $g$ along $T \mathcal{O}$ and $T \mathcal{O}^\perp$ such that with the rescaled metric $\overline{g}$,
	\[ f : ( H_0 \backslash H , h) \rightarrow (\widetilde{M}, \overline{g}) \]
is a local isometry, where $h$ is the pseudo-Riemannian metric induced from the Killing form of $\mathcal{H}$. Now both $\widetilde{M}$ and $H_0 \backslash H$ are connected, simply connected and $H_0 \backslash H$ is complete, which implies that $f$ is in fact an isometry of pseudo-Riemannian manifolds see \cite[Corollary 7.29]{O}. So we have a pseudo-Riemannian covering map 
	\[ H_0 \backslash H \rightarrow (M,\overline{g}), \]
where $\overline{g}$ is the pseudo-Riemannian metric obtained from $g$ by rescaling along $T \mathcal{O}$ and $T \mathcal{O}^\perp$ and $H_0 \backslash H$ is considered with the metric tensor induced from the Killing form in $H$. By Lemma \ref{L4.5}, $(M, \overline{g})$ is again a finite volume pseudo-Riemannian manifold such that the $Sp(k,l)$-action in $M$ lifts to $H_0 \backslash H \cong \widetilde{M}$ commuting with the right $H$-action, so that Theorem \ref{T3.2} tells us that there exists a lattice $\Gamma \subset H$ and a continuous homomorphism $\rho : G \rightarrow G \subset H$ so that we have a pseudo-Riemannian finite covering
	\[ \widehat{M} := H_0 \backslash H / \Gamma \rightarrow M, \]
that is $G$-equivariant, where the $G$-acion in $\widehat{M}$ and $H_0 \backslash H$ is given by left multiplications of $\rho(G)$ and the Theorem follows.


\end{document}